\documentclass[11pt,openbib]{amsart}

\setlength{\textheight}{23cm}
\setlength{\textwidth}{16cm}
\setlength{\oddsidemargin}{0cm}
\setlength{\evensidemargin}{0cm}
\setlength{\topmargin}{0cm}

\usepackage{amsmath}
\usepackage{amssymb}
\usepackage{amsthm}
\usepackage{amscd}
\usepackage{mathrsfs}

\newcommand{\ri}{\mathfrak{o}}
\newcommand{\mi}{\mathfrak{p}}

\newcommand{\Sch}[1]{\mathcal{C}_c^\infty({#1})}

\newcommand{\Z}{\mathbf{Z}}
\newcommand{\C}{\mathbf{C}}

\newcommand{\e}{\sqrt{\epsilon}}
\newcommand{\p}{\varpi}

\newcommand{\U}{\mathrm{U}}

\newcommand{\supp}{\operatorname{supp}}

\def\Section#1{\section{#1}\setcounter{equation}{0}}

\theoremstyle{plain}
\newtheorem{thm}[equation]{Theorem}
\newtheorem{lem}[equation]{Lemma}
\newtheorem{prop}[equation]{Proposition}
\newtheorem{cor}[equation]{Corollary}
\newtheorem{conj}[equation]{Conjecture}
\theoremstyle{definition}
\newtheorem{defn}[equation]{Definition}

\title{On epsilon factors attached to supercuspidal representations of unramified $\mathrm{U}(2,1)$}
\author{Michitaka Miyauchi}
\keywords{$p$-adic group, local newform, $\varepsilon$-factor}
\subjclass[2010]{Primary 22E50, 22E35}
\address{
Department of Mathematics, Faculty of Science\\
Kyoto University\\
Oiwake Kita-Shirakawa Sakyo Kyoto 606-8502 JAPAN
}
\email{miyauchi@math.kyoto-u.ac.jp}
\begin{document}
\maketitle
\pagestyle{myheadings}
\markboth{}{}

\begin{abstract}
Let $G$ be the unramified unitary group in three variables
defined over a $p$-adic field $F$ with $p \neq 2$.
Gelbart, Piatetski-Shapiro and Baruch attached
zeta integrals of Rankin-Selberg type
to irreducible generic representations of $G$.
In this paper,
we formulate a conjecture on
$L$- and $\varepsilon$-factors defined through zeta integrals
in terms of local newforms for $G$,
which is an analogue of the result by Casselman and Deligne for 
$\mathrm{GL}(2)$.
We prove our conjecture for the generic supercuspidal
representations of $G$.
\end{abstract}

\section*{Introduction}
Local newforms play an important role
in the theory of automorphic forms.
Casselman and Deligne established the local newform theory for $\mathrm{GL}(2)$,
which can be stated as follows.
Let $F$ be a non-archimedean local field of characteristic zero
with ring of integers $\ri_F$
and its maximal ideal $\mi_F$.
Let $\psi_F$ be a non-trivial additive character of $F$
with conductor $\ri_F$.
The local counterpart of a level subgroup of $\mathrm{GL}_2(F)$
is defined by
\[
\Gamma_0(\mi_F^n) 
= 
\left(
\begin{array}{cc}
\ri_F & \ri_F \\
\mi_F^n & 1+\mi_F^n
\end{array}
\right)^\times,
\]
for $n \geq 0$.
For each  irreducible admissible
representation $(\pi, V)$ of $\mathrm{GL}_2(F)$,
we
define the subspace
\[
V(n) = \{v \in V\, |\, \pi(k)v = v,\ k \in \Gamma_0(\mi_F^n)\}
\]
of $V$.
Then 
the following 
theorem holds:
\begin{thm}[\cite{Casselman}]\label{thm:gl2}
Let $(\pi, V)$ be an irreducible generic 
representation of $\mathrm{GL}_2(F)$.

(i) There exists a non-negative integer $n$ such that
$V(n) \neq \{0\}$.

(ii) Put $c(\pi) = \min \{n\, |\, V(n) \neq \{0\}\}$.
Then the space $V(c(\pi))$ is one-dimensional.

(iii)
The $\varepsilon$-factor $\varepsilon(s, \pi, \psi_F)$ of $\pi$
 is a constant multiple of $q_F^{-c(\pi)s}$,
 where $q_F$ is the cardinality of the residue field of $F$.
\end{thm}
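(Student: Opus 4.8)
The plan is to pass to the Kirillov model of $\pi$, reformulate invariance under $\Gamma_0(\mi_F^n)$ as support and symmetry conditions on functions on $F^\times$, and then to deduce (iii) from (i)--(ii) by means of the local functional equation of Jacquet--Langlands. Since $(\pi,V)$ is generic, it may be realised on its $\psi_F$-Whittaker model and hence on the Kirillov model $\mathcal{K}$, a space of functions $\xi\colon F^\times\to\C$ with $C_c^\infty(F^\times)\subseteq\mathcal{K}$, on which $\begin{pmatrix}a&b\\0&1\end{pmatrix}$ acts by $\xi(x)\mapsto\psi_F(bx)\,\xi(ax)$. A vector is fixed by $\Gamma_0(\mi_F^n)$ exactly when it is fixed by the three families of generators: invariance under the unipotent radical $\begin{pmatrix}1&\ri_F\\0&1\end{pmatrix}$ forces $\supp\xi\subseteq\ri_F$, as $\psi_F$ has conductor $\ri_F$; invariance under $\begin{pmatrix}\ri_F^\times&0\\0&1\end{pmatrix}$ forces $\xi$ to be $\ri_F^\times$-invariant; and invariance under $\begin{pmatrix}1&0\\0&1+\mi_F^n\end{pmatrix}$ imposes, through the central character, the constraint $n\geq a(\omega_\pi)$ and is then automatic. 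This already yields (i): the function $\mathbf{1}_{\ri_F^\times}\in C_c^\infty(F^\times)\subseteq\mathcal{K}$ satisfies these conditions and, being fixed by an open subgroup of $\mathrm{GL}_2(F)$, is in addition fixed by $\begin{pmatrix}1&0\\\mi_F^n&1\end{pmatrix}$ once $n$ is large, so $V(n)\neq\{0\}$ for $n\gg0$.

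The substance of (ii) is the remaining condition, invariance under $\begin{pmatrix}1&0\\\mi_F^n&1\end{pmatrix}$, imposed on the functions supported in $\ri_F$ and $\ri_F^\times$-invariant, which are parametrised by the sequences $(\xi(\varpi^j))_{j\geq0}$ (with a constraint on the large-$j$ behaviour unless $\pi$ is supercuspidal, dictated by $\mathcal{K}/C_c^\infty(F^\times)$). Writing $\begin{pmatrix}1&0\\c&1\end{pmatrix}=\begin{pmatrix}1&c^{-1}\\0&1\end{pmatrix}\begin{pmatrix}-c^{-1}&0\\0&-c\end{pmatrix}w\begin{pmatrix}1&c^{-1}\\0&1\end{pmatrix}$ with $w=\begin{pmatrix}0&1\\-1&0\end{pmatrix}$, this invariance becomes an identity relating $\xi$ to $\pi(w)$ applied to the translates $\pi\begin{pmatrix}1&c^{-1}\\0&1\end{pmatrix}\xi$; after passing to Mellin transforms and using the local functional equation — which says that $\pi(w)$ acts on Mellin transforms essentially by multiplication by $\gamma(s,\pi,\psi_F)$ — the condition collapses to a finite linear system in the coefficients $\xi(\varpi^j)$. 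One then shows that its solution space has dimension $0$ for $n$ below a threshold and dimension exactly $1$ at the threshold; by definition the threshold is $c(\pi)$, and the one-dimensionality at $n=c(\pi)$ is (ii). I expect this to be the main obstacle: controlling $\pi(w)$ precisely enough on the truncated $\ri_F^\times$-invariant functions, uniformly in $n$, is where the asymptotic behaviour of $\mathcal{K}$ near $0$ — which distinguishes the supercuspidal, special, and principal-series cases — has to be brought in, and where the bookkeeping is heaviest.

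For (iii), let $v_0$ span the line $V(c(\pi))$. Evaluating the zeta integral $Z(s,W_{v_0})=\int_{F^\times}W_{v_0}\!\begin{pmatrix}a&0\\0&1\end{pmatrix}|a|^{s-1/2}\,d^\times a$ directly from the Kirillov function of $v_0$ found above shows that it equals $L(s,\pi)$ up to a nonzero constant, and the same applies to the new vector of $\widetilde\pi$ and $L(s,\widetilde\pi)$. On the other hand, the Whittaker-model involution $W\mapsto\widetilde{W}$, which carries the $\psi_F$-model of $\pi$ to the $\psi_F^{-1}$-model of $\widetilde\pi$, sends $W_{v_0}$ to a nonzero multiple of the new vector of $\widetilde\pi$ dilated by $\varpi^{c(\pi)}$ in the torus variable; this $\varpi^{c(\pi)}$-dilation is forced by comparing the level structure $\Gamma_0(\mi_F^{c(\pi)})$ with its $w$-conjugate, which is brought back to $\Gamma_0(\mi_F^{c(\pi)})$ only after a further conjugation by $\mathrm{diag}(\varpi^{-c(\pi)},1)$. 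Feeding these facts into the local functional equation $Z(1-s,\widetilde{W_{v_0}})/L(1-s,\widetilde\pi)=\varepsilon(s,\pi,\psi_F)\,Z(s,W_{v_0})/L(s,\pi)$, the $L$-factors cancel while the $\varpi^{c(\pi)}$-dilation contributes exactly a constant times $q_F^{-c(\pi)s}$, so that $\varepsilon(s,\pi,\psi_F)$ is a constant multiple of $q_F^{-c(\pi)s}$. This is (iii).
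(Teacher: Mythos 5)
This statement is Casselman and Deligne's theorem, which the paper \emph{cites} from \cite{Casselman} and does not prove. So there is no internal proof to compare against, and what remains is to assess your proposal on its own terms.

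Part (i) is fine: in the Kirillov model the generic representation contains $C_c^\infty(F^\times)$, and $\mathbf{1}_{\ri_F^\times}$ visibly meets the support, $\ri_F^\times$-invariance, and central-character constraints coming from the upper-triangular part of $\Gamma_0(\mi_F^n)$, while smoothness of the action supplies invariance under the lower unipotent $\begin{pmatrix}1&0\\\mi_F^n&1\end{pmatrix}$ once $n$ is large. That gives $V(n)\neq\{0\}$ for $n\gg 0$.

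Part (ii), however, is where the theorem actually lives, and your treatment of it is not a proof but a program. You reduce to invariance under $\begin{pmatrix}1&0\\\mi_F^n&1\end{pmatrix}$ on $\ri_F^\times$-invariant functions supported in $\ri_F$, pass to Mellin transforms, and declare that the local functional equation will collapse the constraint to a \emph{finite} linear system of rank one short of full at $n=c(\pi)$. Nothing in the proposal establishes finiteness, computes the rank, or even identifies $c(\pi)$. You yourself flag this (``I expect this to be the main obstacle''), but an acknowledged gap at the central step is still a gap. The uniqueness statement (ii) genuinely needs the asymptotics of $\mathcal{K}/C_c^\infty(F^\times)$ — the contribution of the Jacquet module — and Casselman's proof (and the later \cite{JPSS} treatment) handles exactly this; your outline names the difficulty but does not resolve it.

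Part (iii) has a circularity. You compute $\varepsilon(s,\pi,\psi_F)$ by writing the two sides of the functional equation as $Z(s,W_{v_0})$ and $Z(1-s,\widetilde{W_{v_0}})$, and you cancel the $L$-factors by asserting that each zeta integral of a newform equals a nonzero constant times the corresponding $L$-factor. But that assertion \emph{is} Theorem~\ref{thm:D} (Deligne's theorem), which the paper cites separately precisely because it is a further nontrivial statement; it is not a corollary of (i)–(ii) without the explicit Kirillov formula that (ii), in your version, never actually produces. The Atkin–Lehner bookkeeping — that conjugating $\Gamma_0(\mi_F^{c(\pi)})$ by $w$ and then by $\mathrm{diag}(\varpi^{-c(\pi)},1)$ returns (essentially) $\Gamma_0(\mi_F^{c(\pi)})$, so that $\widetilde{W_{v_0}}$ is a dilated newform for $\widetilde\pi$ — is sound and is indeed the source of the $q_F^{-c(\pi)s}$. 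But to convert that into the claimed formula for $\varepsilon$ you need either Theorem~\ref{thm:D} for both $\pi$ and $\widetilde\pi$, or some substitute; you should state which you are using and justify it. As written, the proposal for (iii) silently imports the companion theorem.

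In short: the outline is in the right spirit (Kirillov model plus functional equation is a legitimate route, and is close to how \cite{JPSS} generalises the result), but (ii) is unproven and (iii) depends on Theorem~\ref{thm:D}, which is itself cited rather than derived.
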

We call
the integer $c(\pi)$ {\it the conductor of} $\pi$
and $V(c(\pi))$ {\it the space of newforms for} $\pi$.
Another important property of newforms
is that the
zeta integral of a newform expresses the $L$-factor of a
representation.

\begin{thm}[\cite{Deligne}]\label{thm:D}
Let $\pi$ be 
an irreducible generic 
representation of $\mathrm{GL}_2(F)$
and let
$W$ be the newform in the  Whittaker model of $\pi$.
Then the corresponding Jacquet-Langlands's
zeta integral 
$Z(s, W)$
attains the $L$-factor of $\pi$.
\end{thm}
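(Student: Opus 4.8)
The plan is to follow the classical argument of \cite{Casselman,Deligne}, which reduces everything to an explicit description of the newform on the diagonal torus together with a short computation in each case of the classification of irreducible generic representations of $\mathrm{GL}_2(F)$. Write the Jacquet--Langlands zeta integral as
\[
Z(s,W)
=\int_{F^\times}W\!\begin{pmatrix} a & 0\\ 0 & 1\end{pmatrix}|a|^{s-1/2}\,d^\times a
=\sum_{m\in\Z}\xi(\p^m)\,q_F^{-m(s-1/2)},
\qquad
\xi(a):=W\!\begin{pmatrix} a & 0\\ 0 & 1\end{pmatrix},
\]
so that $Z(s,W)$ is governed by the Kirillov function $\xi$ of the newform. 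First I would record that, since the newform $W$ is fixed by $\Gamma_0(\mi_F^{c(\pi)})$ and hence by both $\left(\begin{smallmatrix}1 & \ri_F\\ 0 & 1\end{smallmatrix}\right)$ and $\left(\begin{smallmatrix}\ri_F^\times & 0\\ 0 & 1\end{smallmatrix}\right)$, the function $\xi$ satisfies $\xi(au)=\xi(a)$ for $u\in\ri_F^\times$ and $\xi(\p^m)=\psi_F(\p^m u)\,\xi(\p^m)$ for all $u\in\ri_F$; as $\psi_F$ has conductor $\ri_F$, the second relation forces $\xi(\p^m)=0$ for $m<0$, so $\xi$ is supported on $\ri_F$ and $Z(s,W)$ is a power series in $q_F^{-s}$. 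Combining this with the standard facts that $Z(s,W)/L(s,\pi)\in\C[q_F^{s},q_F^{-s}]$ and that $L(s,\pi)^{-1}\in\C[q_F^{-s}]$, we conclude that $Z(s,W)/L(s,\pi)$ lies in $\C[q_F^{-s}]$; so it remains to prove that for the newform this polynomial equals the non-zero constant $\xi(1)$.

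Next I would go through the classification. If $\pi=\pi(\chi_1,\chi_2)$ is an unramified principal series, then $c(\pi)=0$, $\xi$ is the normalized spherical Whittaker function, Shintani's formula gives $\xi(\p^m)=q_F^{-m/2}\bigl(\chi_1(\p)^{m+1}-\chi_2(\p)^{m+1}\bigr)/\bigl(\chi_1(\p)-\chi_2(\p)\bigr)$ for $m\ge 0$, and summing the resulting series yields exactly $L(s,\pi)=\prod_i(1-\chi_i(\p)q_F^{-s})^{-1}$ with $\xi(1)=1$. In every other case $L(s,\pi)=(1-\alpha q_F^{-s})^{-1}$ for a single constant $\alpha=\alpha(\pi)\in\C$, with $\alpha=0$ (that is, $L(s,\pi)=1$) precisely when $\pi$ is supercuspidal, a twist of Steinberg by a ramified character, or a principal series induced from two ramified characters; in each such case one shows that $(\xi(\p^m))_{m\ge 0}$ is a geometric progression with non-zero initial term $\xi(1)$ and ratio $\alpha q_F^{-1/2}$, whence $Z(s,W)=\xi(1)(1-\alpha q_F^{-s})^{-1}=\xi(1)L(s,\pi)$. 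Concretely this amounts to proving that $\xi$ is supported on $\ri_F^\times$, i.e.\ is a non-zero scalar multiple of $\mathbf{1}_{\ri_F^\times}$, when $L(s,\pi)=1$, and that $\xi$ is a multiple of $x\mapsto\mathbf{1}_{\ri_F}(x)\,\alpha^{\nu_F(x)}q_F^{-\nu_F(x)/2}$ in the remaining partially ramified cases, the computation being carried out on the vector fixed by $\Gamma_0(\mi_F^{c(\pi)})$ inside the induced model for principal series and twists of Steinberg, and inside the compact-induction realization for supercuspidals.

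The hard part will be exactly this last step: producing the newform explicitly enough to read off $(\xi(\p^m))_{m\ge 0}$, and establishing the non-vanishing $\xi(1)\neq 0$, in the ramified principal series and supercuspidal cases --- that is, using the \emph{construction} behind Theorem~\ref{thm:gl2}, not merely its statement. For supercuspidal $\pi$ the vanishing of the Jacquet module already shows that $\xi$ has compact support in $F^\times$ and hence that $Z(s,W)$ is a Laurent polynomial for every $W$; the content is that for the newform the support collapses to $\ri_F^\times$. Once $\xi$ is pinned down, the comparison with $L(s,\pi)$ is a one-line geometric summation. Alternatively, and at the cost of some bookkeeping but avoiding part of the case analysis, one may combine the local functional equation for $Z(s,W)$ with Theorem~\ref{thm:gl2}(iii): by the local Atkin--Lehner theory a suitable intertwining operator carries the newform of $\pi$ to a non-zero multiple of the newform of $\widetilde{\pi}$, so feeding both into the functional equation and using that $\varepsilon(s,\pi,\psi_F)$ is a monomial in $q_F^{-s}$ forces the polynomial $Z(s,W)/L(s,\pi)$ and its counterpart for $\widetilde{\pi}$ to be mutually inverse monomials in $q_F^{-s}$; since each has non-zero constant term $\xi(1)$, both are constant, which is the assertion.
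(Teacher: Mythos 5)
The paper does not prove Theorem~\ref{thm:D}: it is quoted from \cite{Deligne} as background, so strictly speaking there is no internal proof to compare your sketch against. That said, your outline follows the classical Casselman--Deligne line and the skeleton is sound: the $\Gamma_0(\mi_F^{c(\pi)})$-invariance forces the Kirillov function $\xi$ of the newform to be supported in $\ri_F$ and $\ri_F^\times$-invariant; $Z(s,W)/L(s,\pi)$ then lies in $\C[q_F^{-s}]$ with non-zero constant term $\xi(1)$; the Shintani formula handles the unramified case; and a geometric-series identity handles the remaining ones. Your alternative finish via the functional equation together with the monomiality of $\varepsilon(s,\pi,\psi_F)$ is also correct, and it is in fact the same mechanism the paper uses to close its own argument for $\mathrm{U}(2,1)$ in Section~\ref{pf}.

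The real issue is the one you flag yourself: as written the proof defers precisely the steps that carry the mathematical content. To make the classification route rigorous you must actually compute the newform's Kirillov function in the ramified principal series, ramified-twist-of-Steinberg, and supercuspidal cases, and establish the non-vanishing $\xi(1)\neq 0$ --- these are genuine theorems (the $\mathrm{GL}(2)$ analogues of Proposition~\ref{prop:new}(ii)), not formal consequences of Theorem~\ref{thm:gl2} as stated. In particular, ``the support collapses to $\ri_F^\times$'' for supercuspidal $\pi$ is exactly what needs proving; the compact-support observation from the Jacquet module only gives that $Z(s,W)$ is a Laurent polynomial, not that it is a constant.

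It is worth contrasting your classification-by-cases plan with the classification-free route the paper adopts for $\mathrm{U}(2,1)$, following Roberts--Schmidt, which could equally be run for $\mathrm{GL}(2)$. One introduces a Hecke operator $T$ and a level lowering operator $\delta$ on $V(n)$, computes coset representatives explicitly (Lemmas~\ref{lem:hecke_coset}, \ref{lem:level_coset}), and notes that at $n=N_\pi$ the operator $T$ acts by a scalar $\lambda$ (since $\dim V(N_\pi)=1$) while $\delta$ annihilates $V(N_\pi)$. Combining the two yields a two-term recursion for the values $W_v(\zeta^i)$ (Lemma~\ref{lem:rec}), giving directly the closed form $Z(s,W_v)=\dfrac{1-q^{-2s}}{1-\frac{\lambda+q^2}{q^2}q^{-2s}}W_v(1)$ (Proposition~\ref{prop:zeta}) with no appeal to the classification of $\pi$. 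This trades the case analysis for a few coset computations; the non-vanishing $W_v(1)\neq 0$ is still essential input, and the argument is completed with the same functional-equation/monomial-$\varepsilon$ trick you sketch. If you want a self-contained proof of Theorem~\ref{thm:D}, mimicking Sections~\ref{sec:hecke}--\ref{pf} for $\mathrm{GL}(2)$ is cleaner than filling in the case-by-case gaps.
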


Similar results were obtained by Jacquet, Piatetski-Shapiro and Shalika \cite{JPSS}
for $\mathrm{GL}_n(F)$.
Recently, Roberts and Schmidt \cite{RS}
established a theory of local newforms 
for irreducible representations of $\mathrm{GSp}_4(F)$
whose central characters are trivial.
Our main concern is to construct a local newform theory for 
unramified unitary group $\mathrm{U}(2,1)$.

The aim of this paper is twofold.
The first one is 
to formulate an analogue of Theorem~\ref{thm:D} for
Gelbart and Piatetski-Shapiro's
zeta integrals for unramified $\mathrm{U}(2,1)$.
Gelbart and Piatetski-Shapiro \cite{GPS}
initiated 
a theory of Rankin-Selberg integrals for $\mathrm{U}(2,1)$,
and
Baruch \cite{Baruch} developed it.
For a given  irreducible generic representation
of $\mathrm{U}(2,1)$,
they attached a zeta integral involving 
a Whittaker function and  a Schwartz function on $F^2$.
The $L$-factor attached to an irreducible generic representation
of $\mathrm{U}(2,1)$ is defined 
as the greatest common divisor of the zeta integrals.
For unramified $\mathrm{U}(2,1)$,
the author \cite{M} introduced a family of open compact subgroups
and
defined newforms for generic representations $\pi$.
We conjecture that the $L$-factor of $\pi$
is represented by
the zeta integral 
when the Whittaker function  is associated to the newform
and the Schwartz function is the characteristic function of a certain lattice in $F^2$
(Conjecture~\ref{conj:main}).
In addition,
our conjecture includes
the relation between $\varepsilon$-factors
and conductors.
(Theorem~\ref{thm:main}).
We note that there is a related work by
Koseki and Oda \cite{OdaKoseki}
for archimedean situation.

The second aim of this paper is
to show that our conjecture holds 
for the generic supercuspidal representations.
The zeta integral involving the newform
and the characteristic function of a certain lattice
in $F^2$
is decomposed into a product of 
$L_E(s, 1)$
and a more simple zeta integral of the newform,
which is close to that for $\mathrm{GL}(2)$
(Proposition~\ref{prop:zeta1}).
Here $E$ is the unramified quadratic extension over $F$
and
$L_E(s, 1)$ stands for the $L$-factor of the trivial 
representation of $\mathrm{GL}_1(E)$.
To 
compute the latter zeta integral,
we follow the method by Roberts and Schmidt
for $\mathrm{GSp}(4)$.
In \cite{RS},
they utilized
Hecke operators
acting on the space of newforms
to obtain
a key formula of the values of 
the Whittaker function associated to 
the newform at diagonal matrices
in terms of Hecke eigenvalues.
The zeta integral of the newform is  determined by this formula.
In their theory,
the assumption on the central character
is crucial.
We apply their method to 
representations of unramified $\mathrm{U}(2,1)$
whose conductors differ from those of their central characters
to obtain an explicit formula of zeta integrals of newforms 
in terms of Hecke eigenvalues (Proposition~\ref{prop:zeta}).
All the supercuspidal representations 
satisfy this assumption on the central characters,
so our conjecture holds for them.

We summarize the contents of this paper.
In section~\ref{sec:pre}, 
we fix the basic notation for representations of the unramified unitary group
in three variables
and recall the notion of its newforms.
In section~\ref{sec:RS},
we recall from \cite{Baruch} the theory
of Rankin-Selberg convolution for unramified $\mathrm{U}(2,1)$
and do some computation 
relating to newforms.
In section~\ref{sec:conj},
we give Conjecture~\ref{conj:main},
which says that
zeta integrals of newforms compute the $L$-factors of representations.
In section~\ref{sec:hecke},
we introduce the Hecke operator
and the level lowering operator,
and give their explicit description.
Using these operators,
we get a formula of 
zeta integrals of newforms in terms of Hecke eigenvalues.
In section~\ref{pf},
we show that our conjecture is true for all generic supercuspidal representations
of $G$.

Unfortunately, our conjecture is still open 
for non-supercuspidal representations.
Even for such representations,
under the condition on the central characters,
we may show the validity of the conjecture,
which will be treated in our ongoing work.
It is also an important problem to 
compare $L$- and $\varepsilon$-factors defined through
zeta integrals 
with those of $L$-parameters.

\medskip
\noindent
{\bf Acknowledgements} \
The author would like to thank Yoshi-hiro Ishikawa for
his advice and comments
and
Takuya Yamauchi for helpful discussions.

\Section{Preliminaries}\label{sec:pre}
In subsection~\ref{subsec:notation},
we fix our notation for
unramified $\mathrm{U}(2,1)$,
which is used in this paper.
In subsection~\ref{subsec:newform},
we recall from \cite{M} the definition and
basic properties of newforms for
unramified $\mathrm{U}(2,1)$.
\subsection{Notations}\label{subsec:notation}
Let $F$ be a non-archimedean local field of characteristic zero,
$\ri_F$ its ring of integers,
$\mi_F = \p_F \ri_F$ the maximal ideal in $\ri_F$.
We write $q = q_F$ for the cardinality of 
$\ri_F/\mi_F$.
Let $|\cdot|_F$ denote the absolute value of $F$
normalized so that $|\p_F|_F = q_F^{-1}$.
We use the analogous notation 
for any non-archimedean local field.
Throughout this paper,
we  assume that
the residual characteristic of 
$F$ is odd.

Let $E = F[\e]$ be the unramified quadratic  extension over $F$,
where $\epsilon$ is a non-square unit in $\ri_F$.
We know that $q_E = q^2$ and
$\p_F$ is a uniformizer of $E$.
So we abbreviate $\p = \p_F$.
We set
$G  = 
\{ g \in \mathrm{GL}_3(E)\ |\ 
{}^t \overline{g} Jg = J \}$,
where ${}^-$ is the non-trivial element in $\mathrm{Gal}(E/F)$
and
\begin{eqnarray*}
J = 
\left(
\begin{array}{ccc}
0 & 0 &1\\
0 & 1 & 0\\
1 & 0 & 0
\end{array}
\right).
\end{eqnarray*}
Then $G$ is a realization of the $F$-points of unramified $\U(2,1)$ defined 
over $F$.

Let $B$ be the Borel subgroup of $G$ consisting of the upper triangular elements
in $G$ with
Levi subgroup $T$ of diagonal matrices in $G$
and unipotent radical $U$.
We write $\hat{U}$ for the  opposite of $U$:
\begin{eqnarray*}
& U = \left\{
u(x, y) =
\left(
\begin{array}{ccc}
1 & x & y\\
0 & 1 & -\overline{x}\\
0 & 0 & 1
\end{array}
\right)\, 
\Bigg|\,
x, y \in E,\
y+\overline{y} +x\overline{x} = 0
\right\},\\
& \hat{U} = \left\{ \hat{u}(x,y) =
\left(
\begin{array}{ccc}
1 & 0 & 0\\
x & 1 & 0\\
y & -\overline{x} & 1
\end{array}
\right)\, 
\Bigg|\,
x, y \in E,\
y+\overline{y} +x\overline{x} = 0
\right\}.
\end{eqnarray*}
We shall identify the subgroup
\begin{eqnarray*}
H = \left\{
\left(
\begin{array}{ccc}
a & 0 & b\\
0 & 1 & 0\\
c & 0 & d
\end{array}
\right) \in G
\right\}
\end{eqnarray*}
of $G$ with 
$\U(1,1)(E/F)$.
We  set $B_H = B\cap H$, $U_H = U\cap H$
and $T_H = T\cap H$:
\begin{eqnarray*}
& T_H = \left\{
t(a) = \left(
\begin{array}{ccc}
a & 0 & 0\\
0 & 1 & 0\\
0 & 0 & \overline{a}^{-1}
\end{array}
\right)\, \Bigg|\,
a \in E^\times
\right\}.
\end{eqnarray*}
For $a \in E^\times$,
we put
$t(a) = \left(
\begin{array}{cc}
a & 0\\
0 & \overline{a}^{-1}
\end{array}
\right)$ and 
$d(a) = \left(
\begin{array}{cc}
a & 0\\
0 & 1
\end{array}
\right)$.
Then 
every element $h$ in $H = \U(1,1)(E/F)$
can be decomposed into
\begin{eqnarray}\label{eq:h}
h = t(b) d(\e) h_1 d(\e^{-1}),\
\end{eqnarray}
where $b \in E^\times$ and $h_1 \in \mathrm{SL}_2(F)$.

For a non-trivial additive character $\psi_E$ of $E$,
we define a character $\psi_E$ of $U$ by
\[
\psi_E(u(x, y)) = \psi_E(x),\ \mathrm{for}\
u(x, y) \in U.
\]
We say that  a smooth representation $(\pi, V)$ 
of $G$ is {\it generic} if
$\mathrm{Hom}_U(\pi, \psi_E) \neq \{0\}$.
Suppose that $(\pi, V)$ is irreducible and generic.
Then 
it is well-known that 
\[
\dim \mathrm{Hom}_U(\pi, \psi_E) = 1.
\]
By Frobenius reciprocity,
there exists a unique embedding of
$\pi$ into $\mathrm{Ind}_U^G \psi_E$
up to scalar.
The image $\mathcal{W}(\pi, \psi_E)$ of $V$ is called {\it the Whittaker model of} $\pi$.
By a non-zero functional $l \in \mathrm{Hom}_U(\pi, \psi_E)$,
which 
is called {\it the Whittaker functional},
we define {\it the Whittaker function} $W_v \in \mathcal{W}(\pi, \psi_E)$
associated to $v \in V$ by
\[
W_v(g) = l(\pi(g)v),\ g \in G.
\]

We may identify
the center $Z$ of $G$
with the norm-one subgroup $E^{1}$ of $E^\times$.
Under this identification,
we set open compact subgroups
of $Z$ as
\[
Z_0 = Z,\ Z_n = Z\cap (1+\mi_E^n),\ \mathrm{for}\ n \geq 1.
\]
For an irreducible admissible representation $\pi$ of $G$,
we denote by $\omega_\pi$ the central character of $\pi$.
We define {\it the conductor of} $\omega_\pi$
by
\begin{eqnarray*}
n_\pi = \mathrm{min}\{n \geq 0\, |\, \omega_\pi|_{Z_n} = 1\}.
\end{eqnarray*}

\subsection{Newforms}\label{subsec:newform}
For a non-negative integer $n$,
we define an open compact subgroup $K_n$ of $G$
by
\begin{eqnarray*}
K_n
=
\left(
\begin{array}{ccc}
\ri_E & \ri_E & \mi_E^{-n}\\
\mi_E^n & 1+\mi_E^n & \ri_E\\
\mi_E^n & \mi_E^n & \ri_E
\end{array}
\right)
\cap G.
\end{eqnarray*}
For a smooth representation $(\pi, V)$
of $G$,
we denote by $V(n)$ the space of $K_n$-fixed vectors in $V$,
namely,
\[
V(n) =\{ v \in V\, |\, \pi(k)v = v,\ k \in K_n\},\
n \geq 0.
\]

\begin{thm}[\cite{M} Theorems~2.8, 5.6, Corollary 5.5 (i)]\label{thm:new}
Suppose that $(\pi, V)$ is an irreducible generic representation 
of $G$.

(i) There exists a non-negative integer $n$ such that
$V(n) \neq \{0\}$.

(ii) Put $N_\pi = \mathrm{min}\{n\geq 0\, |\, V(n) \neq \{0\}\}$.
Then $\dim V(N_\pi) = 1$.

(iii) If $\pi$ is supercuspidal,
then we have
$N_\pi \geq 2$ and $N_\pi > n_\pi$.
\end{thm}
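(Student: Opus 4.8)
The plan is to work throughout inside the Whittaker model $\mathcal{W}(\pi,\psi_E)$, translating each assertion about $V(n)$ into one about $K_n$-invariant Whittaker functions, whose behaviour is governed by an Iwasawa decomposition of $G$ relative to $K_0$.

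For (i), I would produce nonzero $K_n$-fixed vectors for all large $n$ by passing to a Kirillov-type model. Let $P\subset G$ be the ``mirabolic'' subgroup generated by $U$ and the mirabolic subgroup of the Levi $H\cong\U(1,1)$; the Bernstein--Zelevinsky analysis of $\U(2,1)$ underlying the Rankin--Selberg theory of \cite{GPS,Baruch} shows that $\pi|_{P}$ contains the representation compactly induced from $(\psi_E,U)$, so that in the Kirillov model of $\pi|_P$ one finds the characteristic function of a small open set on which the unipotent, torus and central actions are all trivial for a suitably shifted parameter. Following Casselman's computation for $\mathrm{GL}_2$ \cite{Casselman}, the remaining unipotent directions are killed by taking the congruence level high enough: one writes a function supported on $UK'$ with $K'$ a small congruence subgroup, translates it by $t(\p^{-m})\in T$, and checks that its right-invariance group becomes $K_n$ with $n=n(m,K')$. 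This yields $\bigcup_n V(n)=V$, hence (i); the only non-formal ingredient is the explicit matrix bookkeeping in that last step.

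For (ii), the argument is a level-lowering argument in the spirit of Casselman. A $K_n$-invariant $W\in\mathcal{W}(\pi,\psi_E)$ is determined by its values on representatives of $U\backslash G/K_n$; using the Iwasawa decomposition relative to $K_0$ (the Bruhat--Tits building of $G$ is a tree, so only the identity and the nontrivial Weyl element $J$ intervene) together with the decomposition \eqref{eq:h} of $H$, one reduces to the finitely many nonzero values among $W(t(\p^{j}))$ and $W(t(\p^{j})J)$, $j\in\Z$. The $K_n$-invariance of $W$ then forces a recursion among these values, and one must show that at the minimal level $n=N_\pi$ its solution space is one-dimensional. I expect this to be the main obstacle: it amounts to showing that two independent vectors in $V(N_\pi)$ would combine, via a trace (level-lowering) operator from $K_{N_\pi}$ to $K_{N_\pi-1}$, into a nonzero element of $V(N_\pi-1)$, contradicting the minimality of $N_\pi$; constructing that operator and establishing its injectivity on a hypothetical two-dimensional $V(N_\pi)$ is where the real work lies (compare the $\zeta$-operator computations of Roberts--Schmidt \cite{RS} for $\mathrm{GSp}_4$).

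For (iii), one checks directly from the definitions that $Z_n\subseteq K_n$ for every $n\geq0$; hence a nonzero newform $v\in V(N_\pi)$ is fixed by $Z_{N_\pi}$, so $\omega_\pi|_{Z_{N_\pi}}=1$ and $n_\pi\leq N_\pi$ --- already for every irreducible admissible generic $\pi$. Now let $\pi$ be supercuspidal. Since $K_0=G\cap\mathrm{GL}_3(\ri_E)$ is a hyperspecial maximal compact subgroup of $G$, one has $V(0)=0$; and $V(1)=0$ follows from a finer analysis relating $K_1$ to the parahoric filtration of a (non-hyperspecial) maximal parahoric of $G$ and invoking the cuspidality of $\pi$ on the relevant finite reductive quotient. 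Thus $N_\pi\geq2$. The strict inequality $n_\pi<N_\pi$ is the most delicate point, since the crude containment $Z_n\subseteq K_n$ is of no help here (one has $Z_{N_\pi-1}\not\subseteq K_{N_\pi}$): I would derive it either by showing that $n_\pi=N_\pi$ forces the newform to be invariant under a congruence subgroup strictly larger than $K_{N_\pi}$ --- using cuspidality to absorb the central obstruction, which contradicts $V(N_\pi-1)=0$ --- or, more economically, by reading it off an explicit determination of $N_\pi$ and $n_\pi$ over the finitely many families of generic supercuspidal representations of $\U(2,1)$, both of which can be computed from the inducing data.
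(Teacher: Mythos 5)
The paper does not prove Theorem~\ref{thm:new}: it is imported verbatim from the author's companion preprint \cite{M} (with the attribution to Theorems~2.8, 5.6 and Corollary~5.5(i) there) and then used as a black box. There is therefore no proof in this paper to check your sketch against, and I can only assess the sketch on its own merits.

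With that caveat: the plan is plausible in outline, but it stops short of a proof at exactly the places you flag. For (i), the Kirillov-model strategy is the right shape, but note that the groups $K_n$ are not nested --- the $(1,3)$-entry condition $\mi_E^{-n}$ loosens as $n$ grows --- so invariance under some small open compact does not automatically promote to $K_n$-invariance; the ``matrix bookkeeping'' you defer is precisely where the content lives. For (ii), the level-lowering idea (essentially the operator $\delta$ that Section~\ref{sec:hecke} constructs for a different purpose) is consistent with the Roberts--Schmidt template, but the claim reduces to injectivity of that map on $V(N_\pi)$, and you do not indicate how to get it; asserting the reduction is not the same as carrying it out. For (iii), $Z_n\subseteq K_n$ and hence $n_\pi\le N_\pi$ is correct, as is $V(0)=\{0\}$ from $K_0$ hyperspecial; but $V(1)=\{0\}$ needs a genuine parahoric/finite-group cuspidality argument that you only gesture at, and the strict inequality $n_\pi<N_\pi$ --- which is exactly what Section~\ref{pf} of the paper relies on --- is left as two speculative alternatives rather than proved. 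In short, this is a reasonable roadmap for reproving the cited result of \cite{M}, not a proof, and this paper supplies nothing to compare it against.
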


\begin{defn}[\cite{M} Definition 2.6]
Let $(\pi, V)$ be an irreducible generic representation 
of $G$.
We call 
$N_\pi$ {\it the conductor of $\pi$}
and 
$V(N_\pi)$ {\it the space of newforms for $\pi$}.
\end{defn}

We recall some properties of Whittaker functions associated 
to newforms.
Let $(\pi, V)$ be an irreducible generic representation of $G$.
For each $v \in V$,
we can regard $W_v|_{T_H}$
as a locally constant function on $E^\times$.
Along the lines of the Kirillov theory for $\mathrm{GL}(2)$,
we see that there exists an integer $n$ such that
$\supp W_v|_{T_H}$ is contained in $\mi_E^{-n}$.
Moreover, if $v$ is an element in $\langle \pi(u)w-w\ |\ u \in U,\ w \in V \rangle$,
then $W_v|_{T_H}$ is a compactly supported function on $E^\times$.
\begin{prop}[\cite{M} Corollary 4.6, Theorem 4.12]
\label{prop:new}
Suppose that $\psi_E$ 
has conductor $\ri_E$.
Let $\pi$ be an irreducible generic representation
of $G$ and let $v$ be a newform for $\pi$.

(i) The  function $W_v|_{T_H}$ is $\ri_E^\times$-invariant
and
its support is contained in $\ri_E$.

(ii)
Suppose that $N_\pi \geq 2$ and $N_\pi > n_\pi$.
Then $W_v(1) = 0$ if and only if $v = 0$.
\end{prop}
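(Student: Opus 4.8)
The plan is as follows.

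\emph{Part \textup{(i)}.} I would use only that $v$ is $K_{N_\pi}$-fixed, so that $W_v$ is right $K_{N_\pi}$-invariant, together with the left equivariance $W_v(u(x,y)g)=\psi_E(x)\,W_v(g)$. The single computational input is the identity $t(a)\,u(x,y)=u(ax,\,a\overline{a}\,y)\,t(a)$ for $a\in E^\times$, a one-line matrix multiplication. The $\ri_E^\times$-invariance is then immediate, since $t(u)\in K_{N_\pi}$ for $u\in\ri_E^\times$, whence $W_v(t(au))=W_v(t(a)t(u))=W_v(t(a))$. For the support statement, fix $x\in\ri_E$; because $E/F$ is unramified and $p\neq 2$, the trace $\tr_{E/F}$ carries $\ri_E$ onto $\ri_F$, so one may choose $y\in\ri_E$ with $y+\overline{y}+x\overline{x}=0$, and then $u(x,y)\in K_{N_\pi}$. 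Hence $W_v(t(a))=W_v(t(a)u(x,y))=\psi_E(ax)\,W_v(t(a))$ for every such $x$; if $W_v(t(a))\neq 0$, this forces $\psi_E$ to be trivial on $a\ri_E$, and since $\psi_E$ has conductor $\ri_E$ we conclude $a\in\ri_E$.

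\emph{Part \textup{(ii)}.} The implication $v=0\Rightarrow W_v(1)=0$ is trivial, so I assume $v$ is a newform with $W_v(1)=0$ and aim to show $v=0$. By part (i), the function $a\mapsto W_v(t(a))$ on $E^\times$ is supported in $\mi_E$. Following Casselman's level-lowering argument for $\mathrm{GL}(2)$, I pass to $v'=\pi(t(\p))v$; its Whittaker function satisfies $W_{v'}(t(a))=W_v(t(a)t(\p))=W_v(t(a\p))$ and is therefore supported in $\ri_E$. The key claim is that $v'\in V(N_\pi-1)$; granting it, Theorem~\ref{thm:new}(ii) forces $V(N_\pi-1)=\{0\}$, so $v'=0$ and hence $v=\pi(t(\p^{-1}))v'=0$.

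To establish the claim, note that $v'$ is fixed by $t(\p)K_{N_\pi}t(\p)^{-1}$. Conjugation by $t(\p)=\mathrm{diag}(\p,1,\p^{-1})$ multiplies the $(i,j)$-entry of $K_{N_\pi}$ by $\p^{d_i-d_j}$ with $(d_1,d_2,d_3)=(1,0,-1)$, and comparing the resulting group with $K_{N_\pi-1}$ entry by entry shows that it already contains $K_{N_\pi-1}\cap\hat{U}$ but is too small in the entries $(1,2),(1,3),(2,3)$ (all in $U$) and $(2,2)$ (in $T$). The $U$-deficiency I would resolve through part (i): for every $u(x,y)$ with $x\in\ri_E$ one gets $W_{\pi(u(x,y))v'}(t(a))=\psi_E(ax)\,W_{v'}(t(a))=W_{v'}(t(a))$ for all $a$, because $W_{v'}|_{T_H}$ is supported in $\ri_E$; hence, by injectivity of the Kirillov model (the map $v\mapsto W_v|_{T_H}$), $v'$ is fixed by each such $u(x,y)$, in particular by $K_{N_\pi-1}\cap U$. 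The $(2,2)$-deficiency I would resolve through the central character: each missing element has the form $\mathrm{diag}(1,e,1)=z_e\,t(e^{-1})$ with $e\in E^1\cap(1+\mi_E^{N_\pi-1})$ and $z_e$ central; here $t(e^{-1})\in t(\p)K_{N_\pi}t(\p)^{-1}$ fixes $v'$ while $\omega_\pi(z_e)=1$ precisely because $n_\pi<N_\pi$, so, together with $t(a)\in t(\p)K_{N_\pi}t(\p)^{-1}$ for $a\in\ri_E^\times$, the vector $v'$ is also fixed by $K_{N_\pi-1}\cap T$. As $K_{N_\pi-1}$ is generated by its intersections with $\hat{U}$, $T$ and $U$ (for which I would invoke the description of the groups $K_n$ from \cite{M}), it follows that $v'\in V(N_\pi-1)$.

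The routine ingredients are the matrix bookkeeping in part (i), the explicit shape of $t(\p)K_{N_\pi}t(\p)^{-1}$, and the generation statement for $K_{N_\pi-1}$. The main obstacle — and the place where both hypotheses are spent — is the level-lowering step: the shift by $t(\p)$ enlarges the $\hat{U}$-part of the conjugated group at the cost of shrinking its $U$-part; the hypothesis $W_v(1)=0$, equivalently that $W_{v'}|_{T_H}$ is supported in $\ri_E$ rather than merely in $\mi_E^{-1}$, is exactly what then permits the $U$-part to be re-enlarged via part (i); and $N_\pi>n_\pi$ is exactly what kills the central-character obstruction coming from the $(2,2)$-entry, while $N_\pi\geq 2$ keeps $K_{N_\pi-1}$ among the congruence subgroups $K_n$ with $n\geq 1$, for which the generation statement holds. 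A little care is also needed to invoke the injectivity of the Kirillov model at the right moment, so as to promote right-invariance of $W_{v'}|_{T_H}$ under a unipotent element to invariance of the vector $v'$.
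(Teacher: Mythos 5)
This proposition is cited from the companion paper \cite{M} (Corollary~4.6 and Theorem~4.12) rather than proved here, so there is no in-paper proof to compare against. Your treatment of part~(i) is correct and surely close to the intended one: the two inputs you use (the commutation rule $t(a)u(x,y)=u(ax,a\overline{a}y)t(a)$ and the surjectivity of $\tr_{E/F}$ on integer rings, which guarantees $u(x,y)\in K_{N_\pi}$ for arbitrary $x\in\ri_E$) together with $K_{N_\pi}$-invariance of $W_v$ give both the $\ri_E^\times$-invariance and the support bound.

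Part~(ii), however, has a genuine gap, and it sits exactly at the step you identify as the crux. You invoke ``injectivity of the Kirillov model (the map $v\mapsto W_v|_{T_H}$)'' to upgrade equality of Kirillov restrictions to equality of vectors; but for $\mathrm{U}(2,1)$ this map is \emph{not} injective, precisely because the character $\psi_E$ of $U$ is trivial on the center $U'=U_H=\{u(0,y):y+\overline{y}=0\}$ of the Heisenberg group $U$. Concretely, for any $w\in V$ and any $u_0=u(0,y_0)\in U'$ with $\pi(u_0)w\neq w$ (such $y_0$ exists since $U'$ is noncompact and $\pi$ is not trivial on $U'$), the vector $v_0=\pi(u_0)w-w$ is nonzero, yet
\[
W_{v_0}(t(a))=W_w\bigl(t(a)u(0,y_0)\bigr)-W_w(t(a))
           =W_w\bigl(u(0,a\overline a\,y_0)\,t(a)\bigr)-W_w(t(a))=0
\]
for every $a\in E^\times$. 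So equality of $W\,|_{T_H}$ does not force equality of vectors. This failure is not peripheral to your argument: the part of the $U$-deficiency you most need to fill in — the $(1,3)$-entry, i.e.\ the elements $u(0,y)$ with $y\in\mi_E^{1-N_\pi}\setminus\mi_E^{2-N_\pi}$ lying outside $t(\p)K_{N_\pi}t(\p)^{-1}$ — consists exactly of elements of $U'$, the very direction in which the Kirillov restriction loses information. (The same noninjectivity undermines the portion of the argument with $x\in\ri_E^\times$ as well, since that step also rests on the same upgrade.) This is the structural difference from $\mathrm{GL}(2)$, where $U$ is abelian and $\psi$ is nontrivial on all of $U$, so the analogous counterexample does not arise and the Kirillov map genuinely is injective; the $\U(2,1)$ level-lowering argument cannot be transported by treating $T_H$ as a stand-in for the $\mathrm{GL}(2)$ diagonal without some replacement for this injectivity — for instance, working with a larger ``Kirillov-like'' restriction that separates $U'$, or arguing entirely inside the space of fixed vectors under a compact subgroup that already contains the offending pieces of $U'$.
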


\Section{Rankin-Selberg convolution}\label{sec:RS}
In this section,
we recall from \cite{Baruch} the theory
of Rankin-Selberg convolution for unramified $\mathrm{U}(2,1)$
by Gelbart, Piatetski-Shapiro and Baruch,
and give some computation 
relating to newforms.
In four subsections,
we treat their zeta integrals, $L$-factors,
the functional equation, and $\varepsilon$-factors
respectively.
For an irreducible generic representation $\pi$ of $G$,
their zeta integral has the form $Z(s, W, \Phi)$,
where $W$ is a Whittaker function for $\pi$ and $\Phi$
is a Schwartz function on $F^2$.
In subsection~\ref{subsec:zeta1},
we see that $Z(s, W, \Phi)$ can be decomposed into
a product of a more simple zeta integral and the $L$-factor of 
the trivial representation of $E^\times$
when $W$ is associated to a newform for $\pi$
and $\Phi$ is the characteristic function of a certain lattice in $F^2$.
The $L$-factor attached to $\pi$
is defined as the greatest common divisor of 
the zeta integrals.
In subsection~\ref{subsec:zeta2},
we quote the result by Ishikawa
on the shape of
$L$-factors of supercuspidal representations.
In subsection~\ref{subsec:zeta3},
we recall their functional equation,
and remark that it is the \lq\lq right" functional equation.
In subsection~\ref{subsec:zeta4},
we show that their $\varepsilon$-factors
are monomial.

\subsection{Zeta integrals}\label{subsec:zeta1}
\setcounter{equation}{0}
Let $\Sch{F^2}$ be the space of locally constant, compactly supported functions on $F^2$.
For $\Phi \in \Sch{F^2}$ and $g \in \mathrm{GL}_2(F)$,
we define a function $g \Phi$ in $\Sch{F^2}$ by
\begin{eqnarray*}
(g\Phi)(x, y) = \Phi((x, y)g),\ (x, y) \in F^2.
\end{eqnarray*}
We normalize the Haar measure on $F^\times$ so that
the volume of $\ri_F^\times$ is one.
For $\Phi \in \Sch{F^2}$ and $g \in \mathrm{GL}_2(F)$,
we define a function $z(s, g, \Phi)$ on $\C$ by
\begin{eqnarray*}
z(s, g, \Phi)
=
\int_{F^\times}(g\Phi)(0, r) |r|_E^s d^\times r,\ s \in \C.
\end{eqnarray*}

For any subset $S$ of $F^2$,
we denote by $\mathrm{ch}_{S}$
the characteristic function of $S$.
We set $\Phi_n = \mathrm{ch}_{\mi_F^n \oplus \ri_F}$,
for each integer $n$.
We define
the $L$-factor 
$L_E(s, \chi)$ of
a quasi-character $\chi$ of $E^\times$
as usual:
\[
L_E(s, \chi)
= 
\left\{
\begin{array}{cl}
\displaystyle \frac{1}{1-\chi(\p)q^{-2s}}, & \mathrm{if}\ \chi\ \mathrm{is\ unramified};\\
1, & \mathrm{if}\ \chi\ \mathrm{is\ ramified}.\\
\end{array}
\right.
\]
\begin{lem}\label{lem:z}
For any $k \in 
\left(
\begin{array}{cc}
\ri_F & \mi_F^{-n}\\
\mi_F^n & \ri_F
\end{array}
\right)^\times$,
we have
$z(s, k, \Phi_n) = L_E(s, 1)$.
\end{lem}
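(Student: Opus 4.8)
The plan is to show that the translate $k\Phi_n$ has the same restriction to $\{0\}\times F^\times$ as $\Phi_n$ itself, namely that $(k\Phi_n)(0,r)=\mathrm{ch}_{\ri_F}(r)$ for every $k$ in the indicated group, and then to evaluate the resulting elementary integral. First I would write $k=\begin{pmatrix} a & b\\ c & d\end{pmatrix}$ with $a,d\in\ri_F$, $b\in\mi_F^{-n}$, $c\in\mi_F^n$ and $ad-bc\in\ri_F^\times$. Since $(0,r)k=(rc,rd)$, one has $(k\Phi_n)(0,r)=\mathrm{ch}_{\mi_F^n}(rc)\,\mathrm{ch}_{\ri_F}(rd)$, so the whole point is to identify this with $\mathrm{ch}_{\ri_F}(r)$.

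The key step is a short case analysis on $d$. If $d\in\ri_F^\times$, then $rd\in\ri_F$ is equivalent to $r\in\ri_F$, in which case $rc\in\ri_F\mi_F^n=\mi_F^n$ automatically, so the product of the two characteristic functions equals $\mathrm{ch}_{\ri_F}(r)$. If instead $d\in\mi_F$, then $ad\in\mi_F$, so the condition $ad-bc\in\ri_F^\times$ forces $bc\in\ri_F^\times$; combined with $b\in\mi_F^{-n}$ and $c\in\mi_F^n$ this means $\nu_F(c)=n$, i.e. $c=\p^n u$ with $u\in\ri_F^\times$. Then $rc\in\mi_F^n$ is again equivalent to $r\in\ri_F$, and for such $r$ we have $rd\in\mi_F\subseteq\ri_F$; so once more $(k\Phi_n)(0,r)=\mathrm{ch}_{\ri_F}(r)$. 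This use of the determinant condition to pin down the valuation of the lower-left entry is the only nontrivial point; everything else is a direct unwinding of definitions.

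It then remains to compute $z(s,k,\Phi_n)=\int_{F^\times}\mathrm{ch}_{\ri_F}(r)\,|r|_E^s\,d^\times r$. Using $|r|_E=|r|_F^2$ for $r\in F^\times$ (since $\p$ is a uniformizer of $E$ and $q_E=q^2$) together with the normalization $\mathrm{vol}(\ri_F^\times)=1$, this integral is $\sum_{m\geq0}q^{-2ms}=(1-q^{-2s})^{-1}$, which is exactly $L_E(s,1)$ because the trivial character of $E^\times$ is unramified.
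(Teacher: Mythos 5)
Your proof is correct, but it takes a slightly different route from the paper's. The paper simply observes that $\left(\begin{smallmatrix}\ri_F & \mi_F^{-n}\\ \mi_F^n & \ri_F\end{smallmatrix}\right)^\times$ is the stabilizer in $\mathrm{GL}_2(F)$ of the lattice $\mi_F^n\oplus\ri_F$ under right multiplication, so $k\Phi_n = \Phi_n$ as a function on all of $F^2$; the integral then reduces immediately to $\int_{\ri_F\cap F^\times}|r|_F^{2s}\,d^\times r = L_E(s,1)$ without any case analysis. You instead compute the single value $(k\Phi_n)(0,r)=\mathrm{ch}_{\mi_F^n}(rc)\,\mathrm{ch}_{\ri_F}(rd)$ directly and argue via a two-case analysis on whether $d$ is a unit, using the determinant condition to force $\nu_F(c)=n$ in the non-unit case. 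Your argument is sound and more hands-on, proving only the restriction to $\{0\}\times F^\times$ that the integral actually needs; the paper's lattice-stabilizer observation is shorter and more conceptual, and establishes the stronger statement $k\Phi_n=\Phi_n$ in one step. Both are perfectly valid, and the final evaluation of the $\mathrm{GL}_1$-type integral is identical.
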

\begin{proof}
Since the group
$
\left(
\begin{array}{cc}
\ri_F & \mi_F^{-n}\\
\mi_F^n & \ri_F
\end{array}
\right)^\times$ is the stabilizer of the lattice
$\mi_F^n \oplus \ri_F$,
we obtain
\begin{eqnarray*}
z(s, k, \Phi_n) & = & 
\int_{F^\times}(k\Phi_n)(0, r) |r|_E^s d^\times r
= \int_{F^\times}\Phi_n(0, r) |r|_E^s d^\times r\\
 & = & 
 \int_{\ri_F\cap F^\times} |r|_F^{2s} d^\times r
 = 
 \frac{1}{1-q^{-2s}}\\
 & = & 
 L_E(s, 1),
\end{eqnarray*}
as required.
\end{proof}

For $h \in H$ and $\Phi \in \Sch{F^2}$,
we set
\begin{eqnarray*}
f(s, h, \Phi)
=|b|_E^s z(s, h_1, \Phi),\ s \in \C,
\end{eqnarray*}
where $b \in E^\times$ and $h_1 \in \mathrm{SL}_2(F)$
are as in (\ref{eq:h}).
By \cite{Baruch} Lemma 2.5,
the definition of 
$f(s, h, \Phi)$ is independent of the choice of 
$b \in E^\times$ and $h_1 \in \mathrm{SL}_2(F)$.
Set $K_{n, H} = K_{n} \cap H$.
We may identify $K_{n, H}$ with
\[
\left(
\begin{array}{cc}
\ri_E & \mi_E^{-n}\\
\mi_E^n & \ri_E
\end{array}
\right) \cap \mathrm{U}(1,1)(E/F).
\]
Because $K_{n, H}$ is a good maximal compact subgroup of $H$,
we have an Iwasawa decomposition $H = U_H T_H K_{n, H}$.
We note that 
\[
f(s, t(a)k, \Phi)
= |a|_E^sf(s, k, \Phi),\ a \in E^\times, k \in K_{n, H}.
\]

\begin{lem}\label{lem:f}
For any $k \in K_{n, H}$,
we have
$f(s, k, \Phi_n) = L_E(s, 1)$.
\end{lem}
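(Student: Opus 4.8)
The plan is to reduce the lemma to Lemma~\ref{lem:z} by choosing the decomposition (\ref{eq:h}) of $k$ so that its torus part is a unit. Fix $k \in K_{n, H}$. Since $K_{n, H} \subset H = \U(1, 1)(E/F)$, we may write $k = t(b)\, d(\e)\, k_1\, d(\e^{-1})$ with $b \in E^\times$ and $k_1 \in \mathrm{SL}_2(F)$; then $f(s, k, \Phi_n) = |b|_E^s z(s, k_1, \Phi_n)$ by definition. This presentation is not unique: for any $c \in F^\times$ the pair $(bc,\, t(c)^{-1} k_1)$ gives another one, because $t(c)$ is diagonal (hence commutes with $d(\e)$) and, as $c \in F^\times$, lies in $\mathrm{SL}_2(F)$. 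Since $\p$ is a uniformizer of both $F$ and $E$, we may choose $c \in F^\times$ with $\nu_E(c) = -\nu_E(b)$; replacing $(b, k_1)$ accordingly, we may therefore assume $b \in \ri_E^\times$, so that $|b|_E = 1$.

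The next step is to read off the entries of $k_1 = \left(\begin{smallmatrix}\alpha & \beta \\ \gamma & \delta\end{smallmatrix}\right)$ from those of $k$. In the $2 \times 2$ realization of $H$ one has
\[
k = t(b)\, d(\e)\, k_1\, d(\e^{-1}) = \begin{pmatrix} b\alpha & b\e\beta \\ \overline{b}^{-1}\e^{-1}\gamma & \overline{b}^{-1}\delta\end{pmatrix},
\]
where $b$, $\overline{b}$, $\e$, $\e^{-1}$ are all units of $\ri_E$. By definition $K_{n, H}$ consists of the matrices in $\U(1, 1)(E/F)$ with entries in $\ri_E$, $\mi_E^{-n}$, $\mi_E^n$, $\ri_E$ respectively, and $\alpha, \beta, \gamma, \delta$ lie in $F$; since $\ri_E \cap F = \ri_F$ and $\mi_E^m \cap F = \mi_F^m$, this forces $\alpha, \delta \in \ri_F$, $\beta \in \mi_F^{-n}$ and $\gamma \in \mi_F^n$. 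As $\det k_1 = 1$, the inverse $k_1^{-1} = \left(\begin{smallmatrix}\delta & -\beta \\ -\gamma & \alpha\end{smallmatrix}\right)$ satisfies the same conditions, so $k_1$ stabilizes the lattice $\mi_F^n \oplus \ri_F$; that is, $k_1 \in \left(\begin{smallmatrix}\ri_F & \mi_F^{-n} \\ \mi_F^n & \ri_F\end{smallmatrix}\right)^\times$.

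Lemma~\ref{lem:z} now applies directly and gives $z(s, k_1, \Phi_n) = L_E(s, 1)$, so $f(s, k, \Phi_n) = |b|_E^s z(s, k_1, \Phi_n) = L_E(s, 1)$, as required. I do not expect a genuine obstacle: the only points that need care are the non-uniqueness argument that lets us take $b$ to be a unit and the translation of the congruence conditions among the three descriptions involved — the $\U(2, 1)$-picture $K_{n, H} = K_n \cap H$, the $2 \times 2$ unitary picture of $H$, and the $\mathrm{SL}_2(F)$-coordinates supplied by (\ref{eq:h}) — after which the valuation bookkeeping is immediate.
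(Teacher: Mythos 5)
Your proof is correct and follows the same strategy as the paper's: decompose $k = t(b)\,d(\e)\,k_1\,d(\e^{-1})$ with $b \in \ri_E^\times$ and $k_1 \in \left(\begin{smallmatrix}\ri_F & \mi_F^{-n}\\\mi_F^n & \ri_F\end{smallmatrix}\right)^\times \cap \mathrm{SL}_2(F)$, then invoke Lemma~\ref{lem:z}. The paper simply asserts that such a decomposition exists, whereas you supply the missing justification (normalizing $b$ via the $t(c)$ ambiguity and then reading off the congruence conditions on $k_1$'s entries); this is a welcome elaboration of the same argument rather than a different route.
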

\begin{proof}
We can decompose
$k = t(b) d(\e) k_1 d(\e^{-1})$, where
$b \in \ri_E^\times$ and $k_1 \in \left(
\begin{array}{cc}
\ri_F & \mi_F^{-n}\\
\mi_F^n & \ri_F
\end{array}
\right)^\times \cap \mathrm{SL}_2(F)$.
So we obtain
$f(s, k, \Phi)
 =  |b|_E^s z(s, k_1, \Phi)
 =  L_E(s, 1)$
by Lemma~\ref{lem:z}.
\end{proof}

Let $\pi$ be an irreducible generic representation of $G$.
For $W \in \mathcal{W}(\pi, \psi_E)$
and $\Phi \in \Sch{F^2}$,
we define the zeta integral
\begin{eqnarray*}
Z(s, W, \Phi)
=
\int_{U_H\backslash H}W(h)f(s, h, \Phi) dh.
\end{eqnarray*}
By \cite{Baruch} Proposition 3.4,
the integral $Z(s, W, \Phi)$ absolutely
converges to a  function in $\C(q^{-2s})$ when  $\mathrm{Re}(s)$ is sufficiently large.
We normalize the Haar measures on $E^\times$
and $K_{n, H}$
so that 
the volumes of $\ri_E^\times$ and $K_{n, H}$ are one
respectively.
By the Iwasawa decomposition $H = U_H T_H K_{n, H}$
and the isomorphism $E^\times \simeq T_H ; a \mapsto t(a)$,
we obtain
\begin{eqnarray}\label{eq:zeta_decomp}
Z(s, W, \Phi)  = 
\int_{E^\times}\int_{K_{n, H}} W(t(a)k) f(s, t(a)k, \Phi)
|a|_E^{-1}
d^\times a dk.
\end{eqnarray}
We define another zeta integral of $W$ in $\mathcal{W}(\pi, \psi_E)$
by
\begin{eqnarray*}
Z(s, W)
= 
\int_{E^\times} W(t(a)) |a|_E^{s-1} d^\times a.
\end{eqnarray*}
By the proof of \cite{Baruch} Proposition 3.4,
$Z(s, W)$ also
converges absolutely to a  function in $\C(q^{-2s})$ if  $\mathrm{Re}(s)$ is enough large.
\begin{prop}\label{prop:zeta1}
Suppose that a Whittaker function $W$ 
for $\pi$
is fixed by $K_{n, H}$.
Then we have
\[
Z(s, W, \Phi_n)
= Z(s, W)L_E(s, 1).
\]
\end{prop}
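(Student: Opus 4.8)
The plan is to feed $\Phi = \Phi_n$ into the decomposition (\ref{eq:zeta_decomp}) of $Z(s,W,\Phi)$ coming from the Iwasawa decomposition $H = U_H T_H K_{n,H}$, and then use the two equivariance facts that make this particular pair $(W,\Phi_n)$ special: Lemma~\ref{lem:f}, which says $f(s,k,\Phi_n) = L_E(s,1)$ for every $k \in K_{n,H}$, and the hypothesis that $W$ is right $K_{n,H}$-invariant.

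Concretely, I would first recall the transformation law $f(s,t(a)k,\Phi) = |a|_E^s f(s,k,\Phi)$ for $a \in E^\times$, $k \in K_{n,H}$, so that specializing to $\Phi = \Phi_n$ and invoking Lemma~\ref{lem:f} yields $f(s,t(a)k,\Phi_n) = |a|_E^s L_E(s,1)$, a quantity with no dependence on $k$. Substituting this into (\ref{eq:zeta_decomp}) and using $W(t(a)k) = W(t(a))$ (which is exactly the assumed $K_{n,H}$-invariance of $W$, since $K_{n,H} = K_n \cap H$ acts by right translation in the Whittaker model), the integrand collapses to $W(t(a))\,|a|_E^{s-1} L_E(s,1)$, again independent of $k$. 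Because the Haar measure on $K_{n,H}$ has total mass one, the inner integration is trivial and we obtain
\[
Z(s,W,\Phi_n) = L_E(s,1)\int_{E^\times}\int_{K_{n,H}} W(t(a))\,|a|_E^{s-1}\,d^\times a\,dk = L_E(s,1)\,Z(s,W),
\]
which is the claimed identity.

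The only point deserving a word of care is the range of validity and the legitimacy of rearranging the integral: both $Z(s,W,\Phi_n)$ and $Z(s,W)$ are known from \cite{Baruch} Proposition 3.4 and its proof to converge absolutely to elements of $\C(q^{-2s})$ when $\mathrm{Re}(s)$ is large, so the manipulation above is valid there and the resulting equality of rational functions in $q^{-2s}$ follows. There is no real obstacle in this argument; all the substance has already been extracted in Lemma~\ref{lem:f} (and behind it Lemma~\ref{lem:z}), which identifies $f(\cdot,\cdot,\Phi_n)$ restricted to $K_{n,H}$ with the constant $L_E(s,1)$ — this is precisely the reason $\Phi_n$ is the correct Schwartz function to pair with a newform, and the present proposition is just the bookkeeping that records it.
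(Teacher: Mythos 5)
Your argument is correct and is essentially identical to the paper's own proof: both substitute $\Phi_n$ into the Iwasawa decomposition formula \eqref{eq:zeta_decomp}, apply the $K_{n,H}$-invariance of $W$ and Lemma~\ref{lem:f} to pull out the constant $L_E(s,1)$, and use the unit-volume normalization of $K_{n,H}$. The added remark on convergence is a harmless amplification of what the paper already cites from \cite{Baruch}.
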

\begin{proof}
We obtain
\begin{eqnarray*}
Z(s, W, \Phi_n) & = & 
\int_{E^\times}\int_{K_{n, H}} W(t(a)k) f(s, t(a)k, \Phi_n)
|a|_E^{-1}
d^\times a dk\\
& = & 
\int_{E^\times}\int_{K_{n, H}} W(t(a))|a|_E^{s-1} f(s, k, \Phi_n)
d^\times a dk\\
& = & 
L_E(s, 1)
\int_{E^\times} W(t(a))|a|_E^{s-1} d^\times a
\\
& = & 
Z(s, W) L_E(s, 1)
\end{eqnarray*}
by (\ref{eq:zeta_decomp})
and
Lemma~\ref{lem:f}.
\end{proof}
\subsection{$L$-factors}\label{subsec:zeta2}
Let $I_{\pi}$ be
the subspace of $\C(q^{-2s})$
spanned by $Z(s, W, \Phi)$ where $\Phi \in \Sch{F^2}$,
$W \in \mathcal{W}(\pi, \psi_E)$ and $\psi_E$
runs over the non-trivial additive characters of $E$.
As remarked in \cite{Baruch} p. 331, $I_{\pi}$ is a fractional ideal of $\C(q^{-2s})$.
So we can find a polynomial $P(X) \in \C[X]$ such that
$P(0) = 1$ and $1/P(q^{-2s})$ generates $I_{\pi}$.
We define the $L$-factor $L(s, \pi)$ of $\pi$ by
\begin{eqnarray*}
L(s, \pi) = \frac{1}{P(q^{-2s})}.
\end{eqnarray*}

The following proposition is due to Ishikawa.
But the statement is slightly modified.
\begin{prop}[\cite{Ishikawa} Theorem 4 (4)]\label{prop:L_sc}
Let $\pi$ be an irreducible generic supercuspidal representation
of $G$.
Then $L(s, \pi)$
equals to $1$ or $L_E(s, 1)$.
\end{prop}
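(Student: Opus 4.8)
The statement to prove is Proposition~\ref{prop:L_sc}: for an irreducible generic supercuspidal representation $\pi$ of $G$, the $L$-factor $L(s,\pi)$ is either $1$ or $L_E(s,1)$. Since this is attributed to Ishikawa with a slight modification of the statement, the plan is not to reprove it from scratch but to explain how to deduce the asserted form from what is known.

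\medskip

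\textbf{Proof strategy.} The plan is to combine the general structure of the ideal $I_\pi$ with the fact that for supercuspidal $\pi$ the Whittaker function $W_v|_{T_H}$ has bounded support on $E^\times$ for every $v$ — indeed, by the Kirillov-type theory recalled before Proposition~\ref{prop:new}, for a supercuspidal representation every vector lies in $\langle \pi(u)w - w \mid u \in U, w \in V\rangle$, so $W_v|_{T_H}$ is compactly supported on $E^\times$. First I would use the Iwasawa decomposition $H = U_H T_H K_{n,H}$ together with \eqref{eq:zeta_decomp} to reduce $Z(s, W, \Phi)$, after averaging $\Phi$ over $K_{n,H}$, to a finite sum of integrals of the form $\int_{E^\times} W(t(a)) |a|_E^{s-1} c(a)\, d^\times a$ where $c$ is a locally constant function on $E^\times$ supported on $\ri_E$ (coming from $f(s, t(a)k, \Phi)$ and the constraint that $\Phi \in \mathcal{C}_c^\infty(F^2)$ forces the relevant lattice behaviour). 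Because $W(t(a))$ has compact support on $E^\times$ when $\pi$ is supercuspidal, each such integral is a Laurent polynomial in $q^{-2s}$, i.e.\ lies in $\C[q^{2s}, q^{-2s}]$. Hence $I_\pi$ is a fractional ideal of $\C(q^{-2s})$ contained in $\C[q^{2s}, q^{-2s}]$.

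\medskip

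Next, I would pin down which fractional ideals of $\C(q^{-2s})$ sit inside $\C[q^{2s}, q^{-2s}]$ and are generated by $1/P(q^{-2s})$ with $P(0)=1$: the denominator $P$ can only have the trivial factor, or the factor $1 - q^{-2s}$ (up to the normalisation $P(0)=1$), since any other cyclotomic-type factor would either fail $P(0)=1$ or would not arise. More precisely, Ishikawa's computation (loc.\ cit.) shows that the only pole that $Z(s, W, \Phi)$ can develop for supercuspidal $\pi$ comes from the divergence of $z(s, g, \Phi)$ near $r = 0$, which contributes exactly the Euler factor $L_E(s,1) = (1 - q^{-2s})^{-1}$ of the trivial character of $E^\times$; the Whittaker part contributes nothing polar because its support on $T_H$ is compact. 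Therefore $L(s,\pi)$ divides $L_E(s,1)$, forcing $L(s,\pi) \in \{1,\ L_E(s,1)\}$.

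\medskip

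\textbf{Main obstacle.} The delicate point is the claim that the $T_H$-part contributes no pole — equivalently, that the compact support of $W|_{T_H}$ really does survive all the manipulations with $f(s, h, \Phi)$ and the Iwasawa decomposition, and that no cancellation forces $P(0) \neq 1$ issues. One must be careful that $f(s, t(a)k, \Phi) = |a|_E^s f(s, k, \Phi)$ and that $f(s, k, \Phi)$ itself contributes only the factor $z(s, k_1, \Phi)$, whose only singularity is the geometric-series pole at $q^{-2s} = 1$; combining $|a|_E^s$ with a compactly supported $W(t(a))$ gives an entire (Laurent-polynomial) factor, so the product has its pole only from $z$. Handling the normalisation $P(0)=1$ and checking that the generator is genuinely $1/P(q^{-2s})$ rather than a unit multiple requires invoking that $Z(s, W, \Phi_n) = Z(s,W) L_E(s,1)$ from Proposition~\ref{prop:zeta1} realises the factor $L_E(s,1)$ whenever $Z(s,W) \neq 0$, and that otherwise $I_\pi \subseteq \C[q^{2s}, q^{-2s}]$ gives $L(s,\pi) = 1$. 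This last dichotomy is exactly the content of Ishikawa's Theorem~4~(4), and the statement here is its reformulation in terms of $L_E(s,1)$.
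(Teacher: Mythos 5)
Your overall strategy matches the paper's: supercuspidality forces $W|_{T_H}$ to be compactly supported on $E^\times$, so the only pole of $Z(s,W,\Phi)$ can come from the Tate-type integral $z(s,g,\Phi)$ on $\mathrm{GL}(1)$, which contributes at worst $L_E(s,1)$. That is exactly the argument in the paper, which writes $Z(s,W,\Phi)$ (via smoothness of $W(h)$ and $f(s,h,\Phi)$ in $h$) as a finite linear combination of products $Z(s,W')\,f(s,1,\Phi')$, then uses compact support of $W'|_{T_H}$ to place $Z(s,W')$ in $\C[q^{2s},q^{-2s}]$ and the $\mathrm{GL}(1)$ theory to place $f(s,1,\Phi')/L_E(s,1)$ in $\C[q^{2s},q^{-2s}]$.

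However, your first paragraph contains a genuine error. You assert that each integral is a Laurent polynomial and conclude that $I_\pi$ is ``a fractional ideal of $\C(q^{-2s})$ contained in $\C[q^{2s},q^{-2s}]$''. This is false, and if it were true it would force $L(s,\pi)=1$ always: a generator $1/P(q^{-2s})$ with $P(0)=1$ lying inside $\C[q^{2s},q^{-2s}]$ must have $P$ equal to a monomial, hence $P=1$. The pole coming from $z(s,k_1,\Phi)$ (equivalently from $f(s,k,\Phi)$) sits outside the $E^\times$-integral and is not killed by the compact support of $W(t(a))$; so what you actually get, as your own ``Main obstacle'' paragraph later recognises, is $I_\pi \subseteq L_E(s,1)\,\C[q^{2s},q^{-2s}]$, equivalently $Z(s,W,\Phi)/L_E(s,1) \in \C[q^{2s},q^{-2s}]$. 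The second paragraph's claim that $P$ may contain the factor $1-q^{-2s}$ flatly contradicts the containment you stated in paragraph one. So the structure of your argument is inconsistent as written, even though the final dichotomy you land on is the right one and for the right reasons. You should replace the incorrect containment by the corrected one and remove the now-redundant ``either trivial or $1-q^{-2s}$'' discussion, since $I_\pi \subseteq L_E(s,1)\,\C[q^{2s},q^{-2s}]$ together with $P(0)=1$ already yields $L(s,\pi)\in\{1,\,L_E(s,1)\}$ directly. One further caution: your reduction ``after averaging $\Phi$ over $K_{n,H}$'' is not what is needed; the paper's cleaner route is to use right-smoothness of both $W(h)$ and $f(s,h,\Phi)$ to decompose $Z(s,W,\Phi)$ into the sum $\sum_i Z(s,W'_i)\,f(s,1,\Phi'_i)$, avoiding the issue that a general $W$ is not $K_{n,H}$-fixed.
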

\begin{proof}
It is enough to show that
the function $Z(s, W, \Phi)/L_E(s, 1)$ belongs to 
$\C[q^{-2s}, q^{2s}]$
for all $W \in \mathcal{W}(\pi, \psi_E)$, $\Phi \in \Sch{F^2}$
and non-trivial additive characters $\psi_E$ of $E$.
Since $W(h)$ and $f(s, h, \Phi)$ are right smooth with 
respect to $h \in H$,
the integral $Z(s, W, \Phi)$
is a linear combination of functions of the form
$Z(s, W')f(s, 1, \Phi')$, where 
$W' \in \mathcal{W}(\pi, \psi_E)$ and $\Phi' \in \Sch{F^2}$.
It follows from \cite{M} Propositions 4.1 (ii) and 4.7
that
$W'|_{T_H}$ is a compactly supported function on $T_H \simeq E^\times$.
Note that in \cite{M},
we assume that $\psi_E$ has conductor $\ri_E$,
but Proposition 4.7 holds for all $\psi_E$.
This implies that $Z(s, W')$ lies in $\C[q^{-2s}, q^{2s}]$.
Due to the theory of zeta integrals for $\mathrm{GL}(1)$,
we see that $f(s, 1, \Phi')/L_E(s, 1)$ belongs to $\C[q^{-2s}, q^{2s}]$.
This completes the proof.
\end{proof}

\subsection{The functional equation}\label{subsec:zeta3}
Let $\psi_F$ be a non-trivial additive character of $F$
with conductor
$\mi_F^{c(\psi_F)}$.
We choose the Haar measure on $F^2$ normalized so that
the volume of $\ri_F \oplus \ri_F$ is $q^{c(\psi_F)}$.
For each $\Phi \in \Sch{F^2}$,
we define the Fourier transform $\hat{\Phi}$
by
\begin{eqnarray*}
\hat{\Phi}(x, y)
= \int_{F^2} \Phi(u, v) \psi_F(yu-xv) dudv.
\end{eqnarray*}
One can check that $\hat{\hat{\Phi}} = \Phi$
for all $\Phi \in \Sch{F^2}$.

\begin{lem}\label{lem:z2}
Suppose that the conductor of $\psi_F$  
is $\ri_F$. Then
we have
\[
z(1-s, k, \hat{\Phi}_n) =  q^{-2n(s-1/2)}L_E(1-s, 1),
\]
for any $k \in 
\left(
\begin{array}{cc}
\ri_F & \mi_F^{-n}\\
\mi_F^n & \ri_F
\end{array}
\right)^\times$.
\end{lem}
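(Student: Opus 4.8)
The plan is to compute $z(1-s,k,\hat\Phi_n)$ directly from its definition, reducing to the explicit Fourier transform of the characteristic function $\Phi_n=\mathrm{ch}_{\mi_F^n\oplus\ri_F}$. First I would compute $\hat\Phi_n$. Since $\psi_F$ has conductor $\ri_F$ (so $c(\psi_F)=0$ and the self-dual measure gives $\ri_F\oplus\ri_F$ volume $1$), a routine calculation with the formula
\[
\hat\Phi_n(x,y)=\int_{F^2}\mathrm{ch}_{\mi_F^n}(u)\,\mathrm{ch}_{\ri_F}(v)\,\psi_F(yu-xv)\,du\,dv
\]
factors as a product of two one-dimensional Fourier transforms. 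One gets $\int_{\mi_F^n}\psi_F(yu)\,du = q^{-n}\mathrm{ch}_{\mi_F^{-n}}(y)$ and $\int_{\ri_F}\psi_F(-xv)\,dv=\mathrm{ch}_{\ri_F}(x)$, so that $\hat\Phi_n = q^{-n}\,\mathrm{ch}_{\ri_F\oplus\mi_F^{-n}}$. In particular $\hat\Phi_n$ is, up to the scalar $q^{-n}$, the characteristic function of a lattice whose stabilizer in $\mathrm{GL}_2(F)$ is again $\left(\begin{smallmatrix}\ri_F & \mi_F^{-n}\\ \mi_F^n & \ri_F\end{smallmatrix}\right)^\times$ — the same congruence group appearing in Lemma~\ref{lem:z}.

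Next, exactly as in the proof of Lemma~\ref{lem:z}, for $k$ in this stabilizer group we have $k\hat\Phi_n=\hat\Phi_n$, so
\[
z(1-s,k,\hat\Phi_n)=\int_{F^\times}\hat\Phi_n(0,r)\,|r|_E^{1-s}\,d^\times r
= q^{-n}\int_{F^\times}\mathrm{ch}_{\mi_F^{-n}}(r)\,|r|_F^{2(1-s)}\,d^\times r.
\]
Evaluating the last integral as a geometric series over $r\in \p^{-n}\ri_F\cap F^\times$ (using that $\ri_F^\times$ has volume $1$), one obtains $q^{-n}\cdot q^{2n(1-s)}\cdot L_E(1-s,1) = q^{-n}q^{2n(1-s)}/(1-q^{-2(1-s)})$. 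Collecting the powers of $q$ gives $q^{-n+2n-2ns}=q^{n-2ns}=q^{-2n(s-1/2)}$, which is precisely the claimed factor, so $z(1-s,k,\hat\Phi_n)=q^{-2n(s-1/2)}L_E(1-s,1)$.

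There is no real obstacle here; the only point requiring a little care is the bookkeeping of the normalizations — the self-dual Haar measure on $F^2$ (here of total mass $1$ on $\ri_F\oplus\ri_F$ because $c(\psi_F)=0$) versus the multiplicative measure on $F^\times$ normalized by $\mathrm{vol}(\ri_F^\times)=1$ — and making sure the scalar $q^{-n}$ from the Fourier transform combines correctly with $q^{2n(1-s)}$ from the range of integration to produce the symmetric exponent $-2n(s-1/2)$. Once the Fourier transform $\hat\Phi_n=q^{-n}\mathrm{ch}_{\ri_F\oplus\mi_F^{-n}}$ is in hand, the rest is the same computation as Lemma~\ref{lem:z} with $s$ replaced by $1-s$ and an extra monomial factor.
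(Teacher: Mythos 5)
Your proposal is correct and is essentially the same computation as the paper's proof: compute $\hat{\Phi}_n = q^{-n}\,\mathrm{ch}_{\ri_F\oplus\mi_F^{-n}}$, observe that $k$ fixes $\hat{\Phi}_n$ because it stabilizes the lattice $\ri_F\oplus\mi_F^{-n}$, and then evaluate the resulting integral over $\mi_F^{-n}\cap F^\times$ by pulling out $|\p^{-n}|_E^{1-s}=q^{2n(1-s)}$ to combine with $q^{-n}$ and produce $q^{-2n(s-1/2)}L_E(1-s,1)$. The only cosmetic difference is that the paper states the Fourier transform as an observation rather than working out the two one-dimensional transforms explicitly as you do.
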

\begin{proof}
It is easy to observe that $\hat{\Phi}_n = q^{-n}
\mathrm{ch}_{\ri_F \oplus \mi_F^{-n}}$.
Since $\hat{\Phi}_n$ is fixed by $k$,
we get
\begin{eqnarray*}
z(1-s, k, \hat{\Phi}_n) & = & q^{-n}
\int_{F^\times}(k\cdot \mathrm{ch}_{\ri_F \oplus \mi_F^{-n}})(0, r) |r|_E^{1-s} d^\times r\\
& = & q^{-n}
\int_{F^\times}\mathrm{ch}_{\ri_F \oplus \mi_F^{-n}}(0, r) |r|_E^{1-s} d^\times r\\
 & = &  q^{-n}
 \int_{\mi_F^{-n}\cap F^\times} |r|_E^{1-s} d^\times r\\
 & = & 
 q^{-n} |\p^{-n}|_E^{1-s}
 \int_{\ri_F\cap F^\times} |r|_E^{1-s} d^\times r\\
 & = & q^{-2n(s-1/2)}
 L_E(1-s, 1),
\end{eqnarray*}
as required.
\end{proof}

\begin{cor}\label{cor:f}
If the conductor of $\psi_F$  
is $\ri_F$, then
we have
$f(1-s, k, \hat{\Phi}_n) = q^{-2n(s-1/2)}L_E(1-s, 1)$,
for $k \in K_{n, H}$.
\end{cor}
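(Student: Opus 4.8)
The plan is to reduce Corollary~\ref{cor:f} to Lemma~\ref{lem:z2} in exactly the same way that Lemma~\ref{lem:f} was reduced to Lemma~\ref{lem:z}. First I would recall the decomposition
\[
k = t(b) d(\e) k_1 d(\e^{-1}),\quad b \in \ri_E^\times,\ k_1 \in
\left(
\begin{array}{cc}
\ri_F & \mi_F^{-n}\\
\mi_F^n & \ri_F
\end{array}
\right)^\times \cap \mathrm{SL}_2(F),
\]
available for any $k \in K_{n, H}$, which was already used in the proof of Lemma~\ref{lem:f}. Then, by the definition
\[
f(s, h, \Phi) = |b|_E^s z(s, h_1, \Phi)
\]
with $s$ replaced by $1-s$ and $\Phi$ replaced by $\hat{\Phi}_n$, I get
\[
f(1-s, k, \hat{\Phi}_n) = |b|_E^{1-s} z(1-s, k_1, \hat{\Phi}_n).
\]

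Next I would invoke the hypothesis that $\psi_F$ has conductor $\ri_F$, so that Lemma~\ref{lem:z2} applies to $k_1$, giving
\[
z(1-s, k_1, \hat{\Phi}_n) = q^{-2n(s-1/2)} L_E(1-s, 1).
\]
Since $b \in \ri_E^\times$ we have $|b|_E = 1$, hence $|b|_E^{1-s} = 1$, and therefore
\[
f(1-s, k, \hat{\Phi}_n) = q^{-2n(s-1/2)} L_E(1-s, 1),
\]
which is the claimed identity. One should note, as in Lemma~\ref{lem:f}, that the well-definedness of $f$ is guaranteed by \cite{Baruch} Lemma 2.5, so the choice of decomposition does not matter.

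There is essentially no obstacle here: the corollary is a formal consequence of Lemma~\ref{lem:z2} together with the fact that the $T_H$-part of an element of $K_{n, H}$ is a unit. The only point requiring a moment of care is to check that the $\mathrm{SL}_2(F)$-component $k_1$ of $k$ really lies in $\left(\begin{smallmatrix}\ri_F & \mi_F^{-n}\\ \mi_F^n & \ri_F\end{smallmatrix}\right)^\times$, so that Lemma~\ref{lem:z2} is applicable; but this is precisely the content of the decomposition already established in the proof of Lemma~\ref{lem:f}, so it can simply be cited. The proof is thus a two- or three-line computation.
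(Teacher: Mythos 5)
Your proof is correct and is exactly what the paper intends: the paper simply writes ``Exactly same as the proof of Lemma~\ref{lem:f},'' and your write-up fills in that proof---decompose $k = t(b)\,d(\e)\,k_1\,d(\e^{-1})$, apply Lemma~\ref{lem:z2} to $k_1$, and use $|b|_E = 1$. No differences in approach.
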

\begin{proof}
Exactly same as the proof of Lemma~\ref{lem:f}.
\end{proof}

\begin{prop}\label{prop:zeta2}
Suppose that $\psi_F$ has conductor $\ri_F$.
If a Whittaker function 
$W$ in $\mathcal{W}(\pi, \psi_E)$
is fixed by $K_{n, H}$,
then we have
\[
Z(1-s, W, \hat{\Phi}_n)
= q^{-2n(s-1/2)} Z(1-s, W, \Phi_n).
\]
\end{prop}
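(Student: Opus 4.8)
The plan is to mimic the proof of Proposition~\ref{prop:zeta1} almost verbatim, replacing $\Phi_n$ by $\hat{\Phi}_n$ and Lemma~\ref{lem:f} by Corollary~\ref{cor:f}, and then to recognize the resulting factor $L_E(1-s,1)$ as $Z(1-s, W, \Phi_n)/Z(1-s, W)$ via Proposition~\ref{prop:zeta1} read at the argument $1-s$. In other words, the assertion is just the conjunction of two instances of the same Iwasawa-decomposition computation, one with $\Phi_n$ and one with $\hat{\Phi}_n$, glued along the common factor $Z(1-s,W)$.

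First I would start from the Iwasawa-decomposition formula (\ref{eq:zeta_decomp}), with $s$ replaced by $1-s$ and $\Phi$ replaced by $\hat{\Phi}_n$:
\[
Z(1-s, W, \hat{\Phi}_n) = \int_{E^\times}\int_{K_{n, H}} W(t(a)k)\, f(1-s, t(a)k, \hat{\Phi}_n)\, |a|_E^{-1}\, d^\times a\, dk .
\]
Next I would use the transformation rule $f(1-s, t(a)k, \hat{\Phi}_n) = |a|_E^{1-s} f(1-s, k, \hat{\Phi}_n)$ (the displayed identity preceding Lemma~\ref{lem:f}, valid for an arbitrary Schwartz function) together with the right $K_{n,H}$-invariance of $W$, which gives $W(t(a)k) = W(t(a))$. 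Since $\psi_F$ has conductor $\ri_F$, Corollary~\ref{cor:f} evaluates $f(1-s, k, \hat{\Phi}_n) = q^{-2n(s-1/2)} L_E(1-s, 1)$ for every $k \in K_{n,H}$; as $K_{n,H}$ has volume one, the inner integral collapses and we are left with
\[
Z(1-s, W, \hat{\Phi}_n) = q^{-2n(s-1/2)} L_E(1-s, 1)\int_{E^\times} W(t(a))\, |a|_E^{(1-s)-1}\, d^\times a = q^{-2n(s-1/2)} L_E(1-s, 1)\, Z(1-s, W).
\]
Finally I would invoke Proposition~\ref{prop:zeta1} at the argument $1-s$, namely $Z(1-s, W, \Phi_n) = Z(1-s, W)\, L_E(1-s, 1)$, and substitute this into the previous line to obtain $Z(1-s, W, \hat{\Phi}_n) = q^{-2n(s-1/2)} Z(1-s, W, \Phi_n)$, as claimed.

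I do not expect a genuine obstacle here: the computation is routine once Corollary~\ref{cor:f} is available, and it reuses machinery already set up for Proposition~\ref{prop:zeta1}. The single point worth a remark is convergence: the integrals above, like those defining $Z(s,W,\Phi)$ and $Z(s,W)$, converge only for $\mathrm{Re}(s)$ in a suitable half-plane. But all of them extend to rational functions of $q^{-2s}$, so the identity — verified on that half-plane — propagates to all $s$ by uniqueness of rational continuation, exactly as is already tacitly used in Proposition~\ref{prop:zeta1}. (Alternatively, one can simply rerun the proof of Proposition~\ref{prop:zeta1} with $\hat{\Phi}_n$ in place of $\Phi_n$ and Corollary~\ref{cor:f} in place of Lemma~\ref{lem:f}, which bypasses even this remark.)
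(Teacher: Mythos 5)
Your argument is correct and is essentially identical to the paper's proof: expand via the Iwasawa decomposition (\ref{eq:zeta_decomp}), use right $K_{n,H}$-invariance of $W$ and Corollary~\ref{cor:f} to pull out $q^{-2n(s-1/2)}L_E(1-s,1)$, and then invoke Proposition~\ref{prop:zeta1} at $1-s$ to convert $Z(1-s,W)L_E(1-s,1)$ into $Z(1-s,W,\Phi_n)$. The extra remark about rational continuation is harmless and implicit in the paper's treatment.
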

\begin{proof}
By (\ref{eq:zeta_decomp})
and
Corollary~\ref{cor:f},
we obtain
\begin{eqnarray*}
Z(1-s, W, \hat{\Phi}_n) & = & 
\int_{E^\times}\int_{K_{n, H}} W(t(a)k) f(1-s, t(a)k, \hat{\Phi}_n)
|a|_E^{-1}
d^\times a dk\\
& = & 
\int_{E^\times}\int_{K_{n, H}} W(t(a))|a|_E^{-s} f(1-s, k, \hat{\Phi}_n)
d^\times a dk\\
& = & 
q^{-2n(s-1/2)} L_E(1-s, 1)
\int_{E^\times}W(t(a))|a|_E^{-s} 
d^\times a\\
& = & 
q^{-2n(s-1/2)}
Z(1-s, W) L_E(1-s, 1).
\end{eqnarray*}
So
we get
$Z(1-s, W, \hat{\Phi}_n)
= q^{-2n(s-1/2)} Z(1-s, W, \Phi_n)$
due to Proposition~\ref{prop:zeta1}.
\end{proof}

By \cite{Baruch} Corollary 4.8,
there exists a rational function $\gamma(s, \pi, \psi_F, \psi_E)$
in $q^{-2s}$ such that
\begin{eqnarray}\label{eq:fe}
\gamma(s, \pi, \psi_F, \psi_E)Z(s, W, \Phi)
=Z(1-s, W, \hat{\Phi}).
\end{eqnarray}
We note that this is the \lq\lq right" functional equation.
For an irreducible admissible representation $(\pi, V)$
of $G$,
we denote by $\widetilde{\pi}$ its contragradient representation,
and by $\overline{\pi}$ the representation 
of $G$ on $V$ defined by
\[
\overline{\pi}(g) = \pi(\overline{g}),\ g \in G.
\]
\begin{lem}\label{lem:bar}
Let $\pi$ be an irreducible admissible representation of $G$.
Then $\widetilde{\pi}$
is isomorphic to $\overline{\pi}$.
\end{lem}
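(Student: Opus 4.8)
The plan is to exhibit an explicit isomorphism between $\overline{\pi}$ and $\widetilde{\pi}$ by combining two facts: first, that the outer automorphism $g \mapsto \overline{g}$ of $G$ (the Galois twist) can be realized as conjugation by an element of $\mathrm{GL}_3(E)$ composed with the standard transpose-inverse, and second, that for $\mathrm{U}(2,1)$ the contragredient is already known to be an inner twist of the Galois conjugate. More precisely, for $g \in G$ the defining relation ${}^t\overline{g}Jg = J$ gives $\overline{g} = J\,{}^t g^{-1}\,J^{-1}$. Since $J = J^{-1}$, this reads $\overline{g} = J\,{}^t g^{-1}\,J$. Thus the map $g \mapsto \overline{g}$ on $G$ coincides with $g \mapsto J({}^t g^{-1})J$, which is the composition of the Chevalley involution $g \mapsto {}^t g^{-1}$ with conjugation by $J \in G$.

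The key step is then the following. For any irreducible admissible $(\pi,V)$ one has the standard identification of $\widetilde\pi$ with the representation $g \mapsto \pi({}^t g^{-1})$ acting on $V$ up to isomorphism — this is the statement that $\pi$ is self-dual relative to the Chevalley/transpose-inverse involution, which for $\mathrm{U}(n)$ (as for $\mathrm{GL}(n)$) is classical; I would cite it or deduce it from the Gelfand–Kazhdan method, i.e. from the fact that the transpose-inverse anti-involution preserves every $G$-invariant distribution space relevant to multiplicity-one. Granting this, I compute: $\overline\pi(g) = \pi(\overline g) = \pi(J\,{}^t g^{-1}\,J) = \pi(J)\,\pi({}^t g^{-1})\,\pi(J)^{-1}$. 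Hence $\overline\pi$ is the conjugate by $\pi(J)$ of the representation $g \mapsto \pi({}^t g^{-1})$, which is isomorphic to $\widetilde\pi$. Conjugation by a fixed operator is an isomorphism of representations, so $\overline\pi \simeq \widetilde\pi$.

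The main obstacle is pinning down precisely which form of "$\pi$ is self-dual under transpose-inverse" is available and citable in this setting. For $\mathrm{GL}(n)$ this is the Gelfand–Kazhdan theorem; for unitary groups the cleanest route is to invoke the fact (again Gelfand–Kazhdan, or MVW for classical groups — Moeglin–Vignéras–Waldspurger) that every irreducible admissible representation of a classical $p$-adic group is isomorphic to its contragredient twisted by an automorphism which, for $\mathrm{U}(n)$, can be taken to be an inner automorphism composed with the action of $g \mapsto {}^t g^{-1}$. If one wants to avoid heavy machinery, an alternative is to argue directly via Whittaker models in the generic case — which is all that is needed in this paper: using uniqueness of the Whittaker functional for $\pi$ and the fact that $W \mapsto (g \mapsto W(\overline g\,{}^{-1}))$ or a similar explicit formula intertwines $\overline\pi$ with a model of $\widetilde\pi$. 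I would present the conjugation-by-$J$ computation as the backbone and insert whichever citation for transpose-inverse self-duality is most standard for $\mathrm{U}(2,1)$, noting that for the generic representations at issue it follows from multiplicity one for the Whittaker model.
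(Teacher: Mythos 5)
Your proposal is essentially correct and lands on the same underlying theorem as the paper, but it takes a detour that the paper avoids. The paper cites MVW (Moeglin--Vign\'eras--Waldspurger, p.~91), in the form: if $\delta$ is the $F$-linear automorphism of $E^3$ with $\delta v=\overline{v}$, then for any irreducible admissible $\pi$ of the isometry group one has $\widetilde{\pi}\simeq\pi^\delta$, where $\pi^\delta(g)=\pi(\delta g\delta^{-1})$. Since $\delta g\delta^{-1}=\overline{g}$ entrywise, $\pi^\delta=\overline{\pi}$ and the lemma is immediate. You instead rewrite $\overline{g}=J\,{}^tg^{-1}J$ and invoke self-duality of $\pi$ under the transpose-inverse involution, then absorb the conjugation by $J$ via $\pi(J)$. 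That computation is correct ($J=J^{-1}$ and ${}^tg^{-1}\in G$ for $g\in G$, both easily checked), and the conclusion follows.

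The one thing to be careful about is your citation strategy. You treat ``$\widetilde{\pi}\simeq g\mapsto\pi({}^tg^{-1})$ for $\mathrm{U}(n)$'' as a classical, independently citable fact, by analogy with Gelfand--Kazhdan for $\mathrm{GL}_n$. For unitary groups the standard reference (MVW) states the duality in terms of the semilinear $\delta$ above, not in terms of ${}^tg^{-1}$; the two statements differ exactly by the inner automorphism $\mathrm{Ad}(J)$, which is the relation you have already written down. So your argument is mildly circular: the transpose-inverse self-duality you want to quote is, for $\mathrm{U}(2,1)$, \emph{deduced} from the MVW form via $\overline{g}=J\,{}^tg^{-1}J$, and the MVW form already gives $\widetilde{\pi}\simeq\overline{\pi}$ directly. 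The cleanest fix is simply to state and cite MVW in the $\delta$-form, as the paper does, and drop the intermediate appeal to ${}^tg^{-1}$. Your alternative route via uniqueness of Whittaker models would also work, but only for generic $\pi$, whereas the lemma is stated for all irreducible admissible $\pi$; the MVW route covers the general case at no extra cost.
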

\begin{proof}
We define a hermitian form $h$ on $E^3$
by
\[
h(v, w) = {}^t \overline{v}J w,\ v, w \in E^3.
\]
Then $G$ is just the group of isometries of $(E^3, h)$.
Let $\delta$ be the element in $\mathrm{Aut}_F E^3$
defined by
$\delta v = \overline{v}$ for $v \in E^3$.
Then it follows from \cite{MVW} p. 91 that
$\widetilde{\pi}$ is isomorphic to $\pi^\delta$
where $\pi^\delta (g) = \pi(\delta g \delta^{-1})$, for $g \in G$.
Since $\pi^\delta (g) = \overline{\pi}(g)$,
for $g \in G$,
the lemma follows.
\end{proof}

We further assume $\pi$ is generic.
For $W \in \mathcal{W}(\pi, \psi_E)$,
we set 
\[
\overline{W}(g) = W(\overline{g}),\ g \in G.
\]
By Lemma~\ref{lem:bar},
we see that $\overline{W}$ lies
in $\mathcal{W}(\overline{\pi}, \overline{\psi}_E) = 
\mathcal{W}(\widetilde{\pi}, \overline{\psi}_E)$,
where $\overline{\psi}_E$ is the character of $U$
given by $\overline{\psi}_E(u) = \psi_E(\overline{u})$, $u \in U$.
We define another Fourier transformation on $\Sch{F^2}$
as follows: 
\begin{eqnarray*}
{\Phi}^*(x, y)
= \int_{F^2} \Phi(u, v) \psi_F(yu+xv) dudv,\ \Phi \in \Sch{F^2}.
\end{eqnarray*}
The following lemma implies that
(\ref{eq:fe})
is the \lq\lq right" functional equation.
\begin{lem}\label{lem:fe}
For $W \in \mathcal{W}(\pi, \psi_E)$ and $\Phi \in \Sch{F^2}$,
we have
\[
Z(s, W, \hat{\Phi}) = Z(s, \overline{W}, \Phi^*).
\]
\end{lem}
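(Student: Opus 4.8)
The plan is to reduce the identity to a pointwise comparison of the two integrands along $H$ and then to a bare computation on the Schwartz-function side. First I would write out both sides using the definition of $Z(s, W, \Phi)$ as an integral over $U_H \backslash H$, so that the claim becomes
\[
\int_{U_H\backslash H} W(h) f(s, h, \hat{\Phi})\, dh
= \int_{U_H\backslash H} \overline{W}(h) f(s, h, \Phi^*)\, dh.
\]
Using the decomposition (\ref{eq:h}), $h = t(b) d(\e) h_1 d(\e^{-1})$ with $b \in E^\times$, $h_1 \in \mathrm{SL}_2(F)$, both sides split (by the definition $f(s, h, \Phi) = |b|_E^s z(s, h_1, \Phi)$ and by the transformation law of $W$ under $t(b)$) into an integral over $b$ against an integral over $\mathrm{SL}_2(F)$, so it suffices to show $z(s, h_1, \hat{\Phi}) = z(s, h_1, \Phi^*)$ for all $h_1 \in \mathrm{SL}_2(F)$, together with the matching $\overline{W}(\overline{g}) = W(g)$ on the $H$-part — but the latter is just the definition of $\overline{W}$, so the whole content sits in the $z$-integral.

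Next I would expand $z(s, h_1, \hat{\Phi}) = \int_{F^\times} (h_1 \hat{\Phi})(0, r)\, |r|_E^s\, d^\times r$ and use that Fourier transform commutes with the $\mathrm{GL}_2(F)$-action up to a twist: one checks directly from the definitions of $\hat{\Phi}$ and $\Phi^*$ that $h_1 \hat{\Phi} = \widehat{({}^t h_1^{-1}\,\Phi)}$ and $h_1 \Phi^* = ({}^t h_1^{-1}\,\Phi)^*$ for $h_1 \in \mathrm{SL}_2(F)$ — here the point is that $\psi_F(yu - xv)$ is the skew pairing and $\psi_F(yu + xv)$ the symmetric one, and conjugating the bilinear form by $h_1 \in \mathrm{SL}_2$ (determinant one) sends one normalization to the other. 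So it remains to verify $\int_{F^\times} \hat{\Psi}(0, r)\,|r|_E^s\, d^\times r = \int_{F^\times} \Psi^*(0, r)\,|r|_E^s\, d^\times r$ for $\Psi \in \Sch{F^2}$, which is immediate since $\hat{\Psi}(0, r) = \int_{F^2}\Psi(u, v)\psi_F(ru)\,dudv$ and $\Psi^*(0, r) = \int_{F^2}\Psi(u, v)\psi_F(ru)\,dudv$ are literally the same function of $r$. By linearity in $\Phi$, or by a density argument on $\Sch{F^2}$, this finishes the identity.

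The main obstacle I expect is purely bookkeeping rather than conceptual: keeping the two Fourier normalizations straight, and checking that the measure normalizations (volume $q^{c(\psi_F)}$ on $\ri_F \oplus \ri_F$, volume one on $\ri_F^\times$) and the Iwasawa-type decomposition (\ref{eq:h}) are used consistently on both sides so that no stray power of $q$ is introduced. A secondary subtlety is to confirm that $\overline{W} \in \mathcal{W}(\widetilde{\pi}, \overline{\psi}_E)$ is indeed paired with the correct additive character, so that replacing $h$ by $\overline{h}$ inside the $U_H$-integral is compatible with the character $\overline{\psi}_E$ — but this was already recorded just before the lemma via Lemma~\ref{lem:bar}, so I would simply cite it. Once the normalizations are pinned down, each step is a short direct calculation.
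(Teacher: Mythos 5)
There is a genuine gap, and it is not just bookkeeping. Your plan is to change variables $h\mapsto\overline h$ and then compare the two $f$-integrands, which is the right start. But when you pass to the $\mathrm{SL}_2(F)$-coordinate via $h=t(b)d(\e)h_1 d(\e^{-1})$, you have to track what Galois conjugation does to $h_1$. Since $\overline{\e}=-\e$, one has
\[
\overline h \;=\; t(\overline b)\,d(-\e)\,h_1\,d(-\e^{-1})
\;=\; t(\overline b)\,d(\e)\bigl(d(-1)\,h_1\,d(-1)\bigr)d(\e^{-1}),
\]
so the $\mathrm{SL}_2(F)$-component of $\overline h$ is $d(-1)h_1d(-1)$, not $h_1$. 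The identity that must therefore be established is
\[
z\bigl(s,\, d(-1)h_1 d(-1),\, \Phi^*\bigr) \;=\; z(s,\, h_1,\, \hat\Phi),
\]
and this is exactly what makes the proof work, because $d(-1)\Phi^*=\hat\Phi$ (i.e.\ $\Phi^*(-x,y)=\hat\Phi(x,y)$). Your reduction to the $d(-1)$-free statement $z(s,h_1,\hat\Phi)=z(s,h_1,\Phi^*)$ is not equivalent, and in fact that equality is false in general: already for a lower-triangular unipotent $h_1=\bigl(\begin{smallmatrix}1&0\\1&1\end{smallmatrix}\bigr)$ the integrands $\hat\Phi(r,r)$ and $\Phi^*(r,r)$ differ by the sign of one Fourier variable, which is not invisible to the $d^\times r$-integral.

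The auxiliary Fourier-transform formulas you invoke are also off. From the definitions one computes, for $g\in\mathrm{SL}_2(F)$, that $\widehat{g\Phi}=g^{-1}\hat\Phi$ and $(g\Phi)^*=\bigl(d(-1)g^{-1}d(-1)\bigr)\Phi^*$; equivalently $g\hat\Phi=\widehat{g^{-1}\Phi}$ and $g\Phi^*=\bigl(d(-1)g^{-1}d(-1)\Phi\bigr)^*$. Your claimed formulas $h_1\hat\Phi=\widehat{{}^t h_1^{-1}\Phi}$ and $h_1\Phi^*=\bigl({}^t h_1^{-1}\Phi\bigr)^*$ are both incorrect, and — more to the point — the discrepancy between the two correct formulas is precisely the $d(-1)$-conjugation that your reduction dropped. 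Once you restore that conjugation, the argument lines up with the paper's: first verify $d(-1)\Phi^*=\hat\Phi$, deduce $z(s,d(-1)gd(-1),\Phi^*)=z(s,g,\hat\Phi)$, conclude $f(s,\overline h,\Phi^*)=f(s,h,\hat\Phi)$ from the decomposition of $\overline h$ above, and finally substitute $\overline W(h)=W(\overline h)$ and change variables $h\mapsto\overline h$ in the $U_H\backslash H$-integral, using that $dh$ is $\mathrm{Gal}(E/F)$-invariant.
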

\begin{proof}
We claim that $d(-1) \Phi^* = \hat{\Phi}$.
In fact, we have
\begin{eqnarray*}
(d(-1){\Phi}^*)(x, y)
& = & {\Phi}^*((x, y)d(-1))
= {\Phi}^*(-x, y)\\
& = & 
\int_{F^2} \Phi(u, v) \psi_F(yu-xv) dudv\\
& = & \hat{\Phi}(x, y),
\end{eqnarray*}
for $(x, y) \in F^2$.
Thus, for $g \in \mathrm{GL}_2(F)$,
we obtain
\begin{eqnarray*}
z(s, d(-1){g} d(-1), \Phi^*)
& = & 
\int_{F^\times}(d(-1)gd(-1)\Phi^*)(0, r) |r|_E^s d^\times r\\
& = & 
\int_{F^\times}(g\hat{\Phi})((0, r)d(-1)) |r|_E^s d^\times r\\
& = & 
\int_{F^\times}(g\hat{\Phi})(0, r) |r|_E^s d^\times r\\
& = & 
z(s, {g}, \hat{\Phi}).
\end{eqnarray*}

Next, we shall show that $f(s, \overline{h}, \Phi^*)
= f(s, {h}, \hat{\Phi})$ for all $h \in H$.
Suppose that $h \in H$
is written as
$h = t(b) d(\e) h_1 d(\e^{-1})$,
where $b \in E^\times$ and $h_1 \in \mathrm{SL}_2(F)$.
Then we get
\begin{eqnarray*}
\overline{h} & = & t(\overline{b}) d(\overline{\e})\overline{h}_1d(\overline\e^{-1})
 =   t(\overline{b}) d(-{\e}){h_1} d(-\e^{-1})\\
& =&  t(\overline{b}) d({\e})d(-1){h_1} d(-1) d(\e^{-1}).
\end{eqnarray*}
Since $d(-1){h_1} d(-1) $ lies in $\mathrm{SL}_2(F)$,
we have
$f(s, \overline{h}, \Phi^*)
=|\overline{b}|_E^s z(s, d(-1){h_1} d(-1), \Phi^*)
= |{b}|_E^s z(s, h_1, \hat{\Phi})
= f(s, {h}, \hat{\Phi})$.

Haar measure $dh$ on $U_H\backslash H$ satisfies
$dh = d\overline{h}$
since $U_H$ and $K_{n, H}$ are stable
under the action of $\mathrm{Gal}(E/F)$.
Therefore we get
\begin{eqnarray*}
Z(s, \overline{W}, \Phi^*)
& = & 
\int_{U_H\backslash H}\overline{W}(h)f(s, h, \Phi^*) dh
 = 
\int_{U_H\backslash H}{W}(\overline{h})f(s, h, \Phi^*) dh\\
& = & 
\int_{U_H\backslash H}{W}({h})f(s, \overline{h}, \Phi^*) dh
=
\int_{U_H\backslash H}{W}({h})f(s, {h}, \hat{\Phi}) dh\\
& = & 
Z(s, W, \hat{\Phi}).
\end{eqnarray*}
This is the asserted equation.
\end{proof}
Lemma~\ref{lem:fe}
tells that
the $L$-factor of $\pi$ coincides with that of 
$\widetilde{\pi}$.
\begin{prop}\label{prop:L_cont}
For any irreducible generic representation $\pi$ of $G$,
we have
$L(s, \pi) = L(s, \widetilde{\pi})$.
\end{prop}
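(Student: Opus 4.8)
The plan is to show that the fractional ideals $I_\pi$ and $I_{\widetilde{\pi}}$ of $\C(q^{-2s})$ coincide; since $L(s, \pi)$ is by definition the generator $1/P(q^{-2s})$ of $I_\pi$ with $P(0) = 1$, the equality $L(s, \pi) = L(s, \widetilde{\pi})$ then follows immediately.

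First recall that the Fourier transform $\Phi \mapsto \widehat{\Phi}$ is an involution on $\Sch{F^2}$, hence a bijection. Let $Z(s, W, \Phi)$ be one of the generators of $I_\pi$, with $W \in \mathcal{W}(\pi, \psi_E)$ and $\Phi \in \Sch{F^2}$. Since $\widehat{\widehat{\Phi}} = \Phi$, applying Lemma~\ref{lem:fe} to the pair $(W, \widehat{\Phi})$ gives
\[
Z(s, W, \Phi) = Z(s, W, \widehat{\widehat{\Phi}}) = Z(s, \overline{W}, (\widehat{\Phi})^*).
\]
By the discussion preceding Lemma~\ref{lem:fe} we have $\overline{W} \in \mathcal{W}(\overline{\pi}, \overline{\psi}_E)$, and by Lemma~\ref{lem:bar} this space equals $\mathcal{W}(\widetilde{\pi}, \overline{\psi}_E)$. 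Since $(\widehat{\Phi})^* \in \Sch{F^2}$ and $\overline{\psi}_E$ is a non-trivial additive character of $E$, the right-hand side is one of the functions spanning $I_{\widetilde{\pi}}$. As $I_\pi$ is the linear span of its generators, this shows $I_\pi \subseteq I_{\widetilde{\pi}}$.

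For the reverse inclusion, apply the same argument with $\pi$ replaced by $\widetilde{\pi}$ to get $I_{\widetilde{\pi}} \subseteq I_{\widetilde{\widetilde{\pi}}}$, and then use the canonical isomorphism $\widetilde{\widetilde{\pi}} \cong \pi$ valid for any irreducible admissible representation, which gives $I_{\widetilde{\widetilde{\pi}}} = I_\pi$. Hence $I_\pi = I_{\widetilde{\pi}}$, so the two ideals are generated by $1/P(q^{-2s})$ for the same polynomial $P$ with $P(0) = 1$, and therefore $L(s, \pi) = L(s, \widetilde{\pi})$. There is no real obstacle here: the content lies entirely in Lemmas~\ref{lem:bar} and~\ref{lem:fe}, and the only points needing a little care are the bijectivity of $W \mapsto \overline{W}$ and $\Phi \mapsto \widehat{\Phi}$ (so that ranging over generators is preserved), the admissibility of $\overline{\psi}_E$ among the characters defining $I_{\widetilde{\pi}}$, and the double-duality isomorphism $\widetilde{\widetilde{\pi}} \cong \pi$.
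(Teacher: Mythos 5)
Your proposal is correct and takes essentially the same approach as the paper, which simply asserts that Lemma~\ref{lem:fe} implies $I_\pi = I_{\widetilde{\pi}}$ and leaves the rest to the reader. You have filled in exactly the details the paper leaves implicit: replacing $\Phi$ by $\widehat{\Phi}$ in Lemma~\ref{lem:fe}, using $\widehat{\widehat{\Phi}} = \Phi$, observing that $\overline{W}$ lands in $\mathcal{W}(\widetilde{\pi}, \overline{\psi}_E)$ with $\overline{\psi}_E$ admissible among the characters defining $I_{\widetilde{\pi}}$, and closing the loop via $\widetilde{\widetilde{\pi}} \cong \pi$ (where the paper would equally well invoke the symmetry of the identity in Lemma~\ref{lem:fe} directly).
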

\begin{proof}
It follows from Lemma~\ref{lem:fe}
that the space $I_\pi$ coincides with 
$I_{\widetilde{\pi}}$.
Now the assertion is obvious.
\end{proof}
\subsection{$\varepsilon$-factors}\label{subsec:zeta4}
The $\varepsilon$-factor $\varepsilon(s, \pi, \psi_F, \psi_E)$ 
of  an irreducible generic representation $\pi$ of $G$
is defined by
\begin{eqnarray*}
\varepsilon(s, \pi, \psi_F,  \psi_E)  = 
\gamma(s, \pi, \psi_F, \psi_E) \frac{L(s, \pi)}{L(1-s, \widetilde{\pi})}.
\end{eqnarray*}
Due to Proposition~\ref{prop:L_cont},
we have
\begin{eqnarray}\label{eq:epsilon}
\varepsilon(s, \pi, \psi_F,  \psi_E)  = 
\gamma(s, \pi, \psi_F, \psi_E) \frac{L(s, \pi)}{L(1-s, {\pi})}.
\end{eqnarray}

\begin{prop}\label{prop:mono}
The $\varepsilon$-factor $\varepsilon(s, \pi, \psi_F, \psi_E)$ 
is a monomial in $\C[q^{-2s}, q^{2s}]$
of the form
\[
\varepsilon(s, \pi, \psi_F, \psi_E)
= \pm q^{-2n(s-1/2)},
\]
with some $n \in \Z$.
\end{prop}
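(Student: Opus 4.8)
The plan is to exploit the functional equation together with the explicit behaviour of the zeta integral under Fourier transform that we established for newform-type data. The key observation is that $\varepsilon(s, \pi, \psi_F, \psi_E)$ can be computed by evaluating the functional equation $(\ref{eq:fe})$ on any single choice of $W$ and $\Phi$ for which neither side vanishes identically, since $\gamma(s, \pi, \psi_F, \psi_E)$ is a well-defined rational function independent of that choice. The natural choice is to take $W = W_v$ to be the Whittaker function attached to a newform $v$ for $\pi$ and $\Phi = \Phi_{N_\pi}$, where $N_\pi$ is the conductor of $\pi$. By Theorem~\ref{thm:new} and Proposition~\ref{prop:new}, such a $W$ is fixed by $K_{N_\pi}$, hence by $K_{N_\pi, H}$, and $W(1) \neq 0$, so $Z(s, W)$ is not identically zero.

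First I would reduce to the case $c(\psi_F) = 0$, i.e. $\psi_F$ has conductor $\ri_F$. A change of additive character $\psi_F \mapsto \psi_F(\delta\, \cdot\,)$ with $\dv{\delta} = -c(\psi_F)$ multiplies $\varepsilon$ by a monomial $q^{2c(\psi_F)(s-1/2)}$ times a sign (this is the standard behaviour of $\varepsilon$-factors under scaling of the additive character, which here can be read off directly from the definition of the Fourier transform and the zeta integrals); so it suffices to prove the monomial shape for one $\psi_F$, and conductor $\ri_F$ is the convenient normalization. With this normalization in force, Proposition~\ref{prop:zeta2} gives
\[
Z(1-s, W, \hat{\Phi}_{N_\pi}) = q^{-2N_\pi(s-1/2)}\, Z(1-s, W, \Phi_{N_\pi}),
\]
while the functional equation $(\ref{eq:fe})$, with $\Phi = \Phi_{N_\pi}$, gives
\[
\gamma(s, \pi, \psi_F, \psi_E)\, Z(s, W, \Phi_{N_\pi}) = Z(1-s, W, \hat{\Phi}_{N_\pi}).
\]
Combining, and using Proposition~\ref{prop:zeta1} in the form $Z(s, W, \Phi_{N_\pi}) = Z(s, W) L_E(s, 1)$ and likewise at $1-s$, we obtain
\[
\gamma(s, \pi, \psi_F, \psi_E) = q^{-2N_\pi(s-1/2)}\, \frac{Z(1-s, W) L_E(1-s, 1)}{Z(s, W) L_E(s, 1)}.
\]

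Next I would plug this into the definition $(\ref{eq:epsilon})$ of $\varepsilon$. We get
\[
\varepsilon(s, \pi, \psi_F, \psi_E) = q^{-2N_\pi(s-1/2)}\, \frac{Z(1-s, W) L_E(1-s, 1)}{Z(s, W) L_E(s, 1)}\cdot \frac{L(s, \pi)}{L(1-s, \pi)}.
\]
The factor $Z(s, W) L_E(s, 1) = Z(s, W, \Phi_{N_\pi})$ lies in $I_\pi$, hence is a $\C[q^{-2s}, q^{2s}]$-multiple of $L(s, \pi)$; the ratio $Z(s, W)L_E(s,1)/L(s,\pi)$ is therefore in $\C[q^{-2s}, q^{2s}]$, and symmetrically for the $1-s$ terms and $L(1-s,\pi)=L(1-s,\widetilde\pi)$ (Proposition~\ref{prop:L_cont}). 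So $\varepsilon(s,\pi,\psi_F,\psi_E)$ is a monomial $q^{-2N_\pi(s-1/2)}$ times a ratio $r(s)/r(1-s)$ with $r \in \C[q^{-2s}, q^{2s}]$; but $\varepsilon$ is known from the general theory (or directly from iterating $(\ref{eq:fe})$ using $\hat{\hat\Phi} = \Phi$ and $L(s,\pi)=L(s,\widetilde\pi)$) to satisfy $\varepsilon(s,\pi,\psi_F,\psi_E)\,\varepsilon(1-s,\widetilde\pi,\psi_F^{-1},\psi_E^{-1}) = 1$, which forces $r(s)/r(1-s)$ to be a unit in $\C[q^{-2s}, q^{2s}]$, i.e. $\pm q^{2ms}$ for some $m \in \Z$. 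Hence $\varepsilon(s,\pi,\psi_F,\psi_E) = \pm q^{-2n(s-1/2)}$ for an integer $n$, as claimed.

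The main obstacle I anticipate is the last step: showing that the \emph{a priori} rational correction factor is actually a unit monomial. For a general generic $\pi$ one does not yet have an explicit formula for $Z(s,W)$ with $W$ the newform, so one cannot simply compute $r(s)$ directly; one must instead argue abstractly from the shape of $I_\pi$ and the compatibility of $\gamma$ with the involution $\Phi \mapsto \hat\Phi$, $\pi \mapsto \widetilde\pi$ (Lemmas~\ref{lem:bar}, \ref{lem:fe}). Making the functional-equation-squared argument precise — in particular pinning down which additive characters appear on the dual side and checking the normalizations of Haar measures on $F^2$ are consistent between $\Phi_n$ and $\hat\Phi_n$ (the $q^{c(\psi_F)}$ in the measure) — is the delicate bookkeeping; everything else is formal manipulation of the identities already proved in Section~\ref{sec:RS}.
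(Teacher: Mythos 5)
Your approach diverges from the paper's, and the divergence is exactly where the gap sits. You fix a single test vector $(W_v, \Phi_{N_\pi})$ with $v$ a newform and obtain
\[
\varepsilon(s, \pi, \psi_F, \psi_E) = q^{-2N_\pi(s-1/2)}\,\frac{r(1-s)}{r(s)},
\qquad r(s) = \frac{Z(s, W_v, \Phi_{N_\pi})}{L(s,\pi)} \in \C[q^{-2s}, q^{2s}].
\]
This, combined with the relation $\varepsilon(s)\varepsilon(1-s)=1$, imposes no constraint whatsoever: any nonzero rational function of the shape $cq^{2ms}\,f(1-s)/f(s)$ automatically has product $1$ with its $s\mapsto 1-s$ reflection, irrespective of whether $f$ is a unit. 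To conclude $\varepsilon$ is a unit monomial you would need to know independently that $\varepsilon\in\C[q^{-2s},q^{2s}]$, which from your display amounts to knowing that $r(s)$ divides $r(1-s)$ in the Laurent polynomial ring, or in practice that $r$ is itself a unit. But that is exactly what Conjecture~\ref{conj:main} asserts ($r=1$), and the paper uses Proposition~\ref{prop:mono} as an ingredient in proving that conjecture for supercuspidals, so your argument is circular. The ``functional-equation-squared'' step you flag as delicate bookkeeping is not bookkeeping; it is the entire content of the proposition, and it cannot be extracted from a single test vector.

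The paper's proof instead exploits the g.c.d.\ definition of $L(s,\pi)$: it picks finitely many $(W_i,\Phi_i)$ with $\sum_i Z(s,W_i,\Phi_i)/L(s,\pi)=1$ (a partition of unity in $I_\pi$), multiplies by $\varepsilon$, and applies the functional equation term by term, getting $\varepsilon(s)=\sum_i q^{-2sm_i}\,Z(1-s,W'_i,\hat\Phi_i)/L(1-s,\pi)$. Each summand lies in $\C[q^{-2s},q^{2s}]$ by the g.c.d.\ property of $L$ together with Proposition~\ref{prop:L_cont}, hence so does $\varepsilon$; applying the same expansion once more gives $\varepsilon(s)\varepsilon(1-s)=1$, and the standard unit argument finishes. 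The key point you are missing is that polynomiality of $\varepsilon$ has to come from summing over a family realizing the g.c.d., not from one newform. A secondary issue: your use of Proposition~\ref{prop:new}~(ii) to ensure $W_v(1)\neq 0$ needs $N_\pi\geq 2$ and $N_\pi>n_\pi$, which is known only for supercuspidal $\pi$ (Theorem~\ref{thm:new}~(iii)), whereas Proposition~\ref{prop:mono} is stated for all irreducible generic $\pi$.
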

\begin{proof}
By definition,
there exist
$\Phi_i \in \Sch{F^2}$,
additive characters $\psi_{E, i}$ of $E$
and
$W_i \in \mathcal{W}(\pi, \psi_{E, i})$ 
($1 \leq i\leq k$)
such that
\begin{eqnarray}\label{eq:rep1}
\sum_{i = 1}^k \frac{Z(s, W_i, \Phi_i)}{L(s, \pi)} = 1.
\end{eqnarray}
By the proof of \cite{Baruch} Lemma 4.9,
there exists
$W'_i \in \mathcal{W}(\pi, \psi_E)$
such that $Z(s, W_i, \Phi_i)
= q^{-2sm_i}Z(s, W'_i, \Phi_i)$,
for some $m_i \in \Z$.
From (\ref{eq:rep1}),
we have the following expression of the $\varepsilon$-factor:
\begin{eqnarray*}
\varepsilon(s, \pi, \psi_F, \psi_E)
& = & 
\varepsilon(s, \pi, \psi_F, \psi_E)
\sum_{i = 1}^k \frac{q^{-2sm_i} Z(s, W'_i, {\Phi}_i)}{L(s, \pi)}\\
& = & 
\sum_{i = 1}^k \frac{q^{-2sm_i} Z(1-s, W'_i, \hat{\Phi}_i)}{L(1-s, \pi)}.
\end{eqnarray*}
The second equality is a consequence of 
(\ref{eq:fe}) and (\ref{eq:epsilon}).
This implies that $\varepsilon(s, \pi, \psi_F, \psi_E)$
is a polynomial in $q^{-2s}$ and $q^{2s}$.

By the above expression of the $\varepsilon$-factor,
we get
\begin{eqnarray*}
\varepsilon(s, \pi, \psi_F, \psi_E)
\varepsilon(1-s, \pi, \psi_F, \psi_E)
& = & 
\varepsilon(s, \pi, \psi_F, \psi_E)
\sum_{i = 1}^k \frac{q^{-2(1-s)m_i} Z(s, W'_i, \hat{\Phi}_i)}{L(s, \pi)}\\
& =& 
\sum_{i = 1}^k \frac{q^{-2(1-s)m_i} Z(1-s, W'_i, {\Phi}_i)}{L(1-s, \pi)}\\
& = & 1.
\end{eqnarray*}
In the second equality,
we use 
the functional equation and
$\hat{\hat{\Phi}}_i = \Phi_i$.
The last equality is a consequence of (\ref{eq:rep1}).
Now the assertion follows by standard arguments.
\end{proof}

\Section{Conjecture on local newforms}\label{sec:conj}
We give the following conjecture on 
zeta integrals of newforms.
\begin{conj}\label{conj:main}
Let $E$ be the unramified quadratic extension over a non-archimedean 
local field $F$ of characteristic zero and of odd residual characteristic.
We fix an additive character $\psi_E$ of 
$E$ 
with conductor $\ri_E$.
Let $\pi$ be an irreducible generic representation of $\mathrm{U}(2,1)(E/F)$.
Then there exists a newform $v$ for $\pi$ 
which satisfies
\[
Z(s, W_v, \Phi_{N_\pi}) = L(s, \pi),
\]
where $N_\pi$ is the conductor of $\pi$ and
$\Phi_{N_\pi}$ is the characteristic function of 
$\mi_F^{N_\pi} \oplus \ri_F$.
\end{conj}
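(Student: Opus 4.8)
We sketch a strategy toward Conjecture~\ref{conj:main}; the body of the paper carries it out when $\pi$ is supercuspidal, and the last two points below indicate the additional work needed in general. \textbf{Reduction to $Z(s,W_v)$.} By Theorem~\ref{thm:new} the space $V(N_\pi)$ is one-dimensional; fix a nonzero newform $v$. Since $v$ is $K_{N_\pi}$-fixed it is in particular fixed by $K_{N_\pi,H}=K_{N_\pi}\cap H$, so Proposition~\ref{prop:zeta1} gives
\[
Z(s,W_v,\Phi_{N_\pi}) = Z(s,W_v)\,L_E(s,1).
\]
Because $Z(s,W_v,\Phi_{N_\pi})$ is linear in $W_v$, it suffices to show that $Z(s,W_v)\,L_E(s,1)$ and $L(s,\pi)$ agree up to a nonzero constant and then rescale $v$; equivalently, one must compute the rational function $Z(s,W_v)=\int_{E^\times}W_v(t(a))|a|_E^{s-1}\,d^\times a$ explicitly.

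\textbf{Supercuspidal case.} When $\pi$ is supercuspidal, Theorem~\ref{thm:new}(iii) gives $N_\pi\ge 2$ and $N_\pi>n_\pi$, so Proposition~\ref{prop:new}(ii) is available; combined with the action of the Hecke operator and the level-lowering operator on the spaces $V(n)$, it yields a closed formula for the values $W_v(t(a))$ in terms of the single Hecke eigenvalue attached to $v$. Substituting this formula into the integral for $Z(s,W_v)$ shows that $Z(s,W_v)\,L_E(s,1)$ equals $1$ or $L_E(s,1)$, matching the two possibilities for $L(s,\pi)$ in Proposition~\ref{prop:L_sc}; tracking which alternative occurs yields $Z(s,W_v,\Phi_{N_\pi})=L(s,\pi)$.

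\textbf{Non-supercuspidal case.} Since $G$ has $F$-rank one, the Borel $B$ is the only proper parabolic subgroup up to conjugacy, so every irreducible non-supercuspidal generic representation is a subquotient of $\mathrm{Ind}_B^G(\chi\otimes\mu)$ for a character $\chi\otimes\mu$ of the Levi $T\cong E^\times\times E^1$ of $B$. For each such $\pi$ one would (i) determine $N_\pi$ and realize an explicit newform inside the induced model via the Bruhat and Iwasawa decompositions of $G$ relative to $B$, together with the compatibility of newforms with parabolic induction; (ii) compute $W_v$ through the Jacquet integral; and (iii) evaluate $Z(s,W_v)$, which one expects to factor, through the $\mathrm{GL}(1)$-theory over $E$, into a product of $L_E$-factors built from $\chi$ and $\mu$, to be matched against the Rankin--Selberg $L$-factor $L(s,\pi)$ computed by Ishikawa.

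\textbf{Main obstacle.} The hard case is precisely $N_\pi=n_\pi$, which cannot occur for supercuspidals by Theorem~\ref{thm:new}(iii) but does occur among non-supercuspidals---already for unramified principal series, where $N_\pi=0$ and $\Phi_{N_\pi}=\mathrm{ch}_{\ri_F\oplus\ri_F}$. There Proposition~\ref{prop:new}(ii) fails, the newform's Whittaker function may vanish at the identity, and the Hecke-operator recursion underlying the supercuspidal computation degenerates, so $W_v|_{T_H}$ cannot be recovered from Hecke eigenvalues alone; one would instead have to compute $W_v|_{T_H}$ directly, in the unramified case by a Casselman--Shalika-type evaluation on $G$. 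Even granting such a formula, the genuinely delicate point is normalization: Conjecture~\ref{conj:main} asserts an \emph{equality}, not a proportionality, so one must verify that the leading coefficient of $Z(s,W_v)\,L_E(s,1)/L(s,\pi)$ is exactly $1$, which forces careful bookkeeping of the Jacquet-integral normalization, of $W_v(1)$, and of the Haar-measure conventions fixed in section~\ref{sec:RS}.
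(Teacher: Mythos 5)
Your overall route for the supercuspidal case is the paper's: reduce $Z(s,W_v,\Phi_{N_\pi})$ to $Z(s,W_v)\,L_E(s,1)$ via Proposition~\ref{prop:zeta1}, then compute $Z(s,W_v)$ from the Hecke and level-lowering operators using $N_\pi\ge 2$, $N_\pi>n_\pi$ and Proposition~\ref{prop:new}(ii). But there is a genuine gap at the decisive step, which you compress into ``tracking which alternative occurs.'' After the Hecke computation one only knows
\[
Z(s,W_v)=\frac{1-q^{-2s}}{1-\tfrac{\lambda+q^{2}}{q^{2}}\,q^{-2s}}\,W_v(1),
\]
and two further inputs are needed. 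First, to conclude $Z(s,W_v)\in\{1,\ 1-q^{-2s}\}$ (after normalizing $W_v(1)=1$) you must use that $W_v|_{T_H}$ has compact support for supercuspidal $\pi$, as in the proof of Proposition~\ref{prop:L_sc}, so that $Z(s,W_v)$ is a Laurent polynomial in $q^{-2s}$ and the denominator must cancel; the closed formula alone does not give this. Second, and more seriously, even then both $Z(s,W_v,\Phi_{N_\pi})$ and $L(s,\pi)$ lie in $\{1,\,L_E(s,1)\}$, and nothing in your sketch excludes the mismatch $Z(s,W_v,\Phi_{N_\pi})=1$ while $L(s,\pi)=L_E(s,1)$. (The fact that $Z/L$ is a polynomial is automatic from the definition of $L(s,\pi)$ as a generator of the fractional ideal $I_\pi$, so it cannot decide between the alternatives.) The paper's mechanism is a contradiction argument: assume the mismatch, substitute $Z(1-s,W_v,\hat{\Phi}_{N_\pi})=q^{-2N_\pi(s-1/2)}Z(1-s,W_v,\Phi_{N_\pi})$ from Proposition~\ref{prop:zeta2} into the functional equation, and deduce that $\varepsilon(s,\pi,\psi_F,\psi_E)$ would equal $q^{-2N_\pi(s-1/2)}L_E(s,1)/L_E(1-s,1)$, contradicting the monomiality of the $\varepsilon$-factor established in Proposition~\ref{prop:mono}. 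Without this (or an equivalent) argument the proof is incomplete.

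Your remarks on the non-supercuspidal case describe a plausible program but are not part of the paper's proof: the paper establishes the conjecture only for supercuspidal $\pi$ (Theorem~\ref{thm:main}) and explicitly leaves the remaining cases open, so there is nothing there to check against the text.
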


Here is our main theorem,
which will be proved in section~\ref{pf}.
\begin{thm}\label{thm:main}
Conjecture~\ref{conj:main} holds for
any irreducible generic supercuspidal representations of $\mathrm{U}(2,1)(E/F)$.
\end{thm}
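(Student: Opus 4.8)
The plan is to reduce Theorem~\ref{thm:main} to an explicit evaluation of the one-variable zeta integral $Z(s, W_v)$ of the newform, using Proposition~\ref{prop:zeta1}. Indeed, since a newform $v$ for $\pi$ is fixed by $K_{N_\pi}$, hence by $K_{N_\pi, H}$, Proposition~\ref{prop:zeta1} gives $Z(s, W_v, \Phi_{N_\pi}) = Z(s, W_v) L_E(s, 1)$. By Proposition~\ref{prop:L_sc}, for a generic supercuspidal $\pi$ we have $L(s, \pi) \in \{1, L_E(s, 1)\}$. So the conjecture for supercuspidals amounts to showing: (a) if $L(s, \pi) = L_E(s, 1)$ then $Z(s, W_v) = 1$ for a suitable newform $v$, and (b) if $L(s, \pi) = 1$ then $Z(s, W_v) = 1/L_E(s, 1) = 1 - q^{-2s}$ for a suitable newform. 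In both cases the target is a very simple element of $\C[q^{-2s}, q^{2s}]$, and the whole problem becomes the computation of the Mellin transform $\int_{E^\times} W_v(t(a)) |a|_E^{s-1} d^\times a$.

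Next I would invoke Proposition~\ref{prop:new}: for a newform $v$, the function $a \mapsto W_v(t(a))$ on $E^\times$ is $\ri_E^\times$-invariant and supported in $\ri_E$, and (since $\pi$ is supercuspidal, so $N_\pi \geq 2$ and $N_\pi > n_\pi$ by Theorem~\ref{thm:new}(iii)) it is nonzero, with $W_v(1) \neq 0$. Writing $a_j = W_v(t(\p^j))$ for $j \geq 0$ and normalizing $a_0 = W_v(1) = 1$, the $\ri_E^\times$-invariance collapses the integral to the power series $Z(s, W_v) = \sum_{j \geq 0} a_j q^{-2js}$. The task is therefore to show this power series equals $1$ in case (a) and $1 - q^{-2s}$ in case (b). To pin down the coefficients $a_j$ I would use the Hecke operator and the level-lowering operator from section~\ref{sec:hecke} (Propositions~\ref{prop:zeta} and the machinery built there, following Roberts--Schmidt for $\mathrm{GSp}(4)$): these operators act on the one-dimensional space of newforms, and the resulting eigenvalue relations translate into a linear recurrence among the $a_j$. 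Because $\pi$ is supercuspidal, its Whittaker function on $T_H$ has \emph{compact} support (as used in the proof of Proposition~\ref{prop:L_sc}), so only finitely many $a_j$ are nonzero; combined with the recurrence this forces $a_j = 0$ for $j \geq 1$ in case (a), giving $Z(s, W_v) = 1$, and forces $a_1 = -1$, $a_j = 0$ for $j \geq 2$ in case (b), giving $Z(s, W_v) = 1 - q^{-2s}$. The dichotomy between (a) and (b) should itself be read off from whether the relevant Hecke eigenvalue vanishes, matching Ishikawa's computation of $L(s,\pi)$.

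The main obstacle I anticipate is precisely the derivation of the recurrence for the $a_j$ — that is, making the Roberts--Schmidt Hecke-operator method work in the $\mathrm{U}(2,1)$ setting when $N_\pi > n_\pi$. One must define the Hecke operator $T$ and the level-raising/level-lowering maps between $V(N_\pi)$, $V(N_\pi+1)$, etc., check they are well-defined on $K_n$-fixed vectors, compute their effect on Whittaker functions at diagonal elements $t(\p^j)$ via explicit coset decompositions of the double cosets $K_n g K_n$, and verify that the one-dimensionality of $V(N_\pi)$ forces $v$ to be a $T$-eigenvector. The coset computations are the genuinely technical part and the place where the hypothesis $N_\pi > n_\pi$ on the central character (satisfied by all supercuspidals by Theorem~\ref{thm:new}(iii)) enters, exactly as the triviality of the central character was essential in \cite{RS}. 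Once Proposition~\ref{prop:zeta} is in hand — an explicit formula for $Z(s, W_v)$ in terms of Hecke eigenvalues — the remaining step is a short bookkeeping argument: match that formula against the two possible values of $L(s,\pi)/L_E(s,1)$ from Proposition~\ref{prop:L_sc}, using finiteness of the support of $W_v|_{T_H}$ to discard the non-polynomial alternatives. I would close by noting that the normalization of $v$ (scaling so that the zeta integral is exactly $L(s,\pi)$ rather than a constant multiple) is harmless since $\dim V(N_\pi) = 1$ and $W_v(1) \neq 0$.
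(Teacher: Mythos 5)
Your reduction through Proposition~\ref{prop:zeta1} and the Hecke/level-lowering machinery of section~\ref{sec:hecke} is exactly what the paper does, and it correctly lands you at $Z(s, W_v) \in \{1,\ 1-q^{-2s}\}$ for a normalized newform, hence $Z(s, W_v, \Phi_{N_\pi}) \in \{L_E(s,1),\ 1\}$. The gap is in the last matching step. Knowing that $Z(s,W_v,\Phi_{N_\pi})$ and $L(s,\pi)$ each lie in $\{1, L_E(s,1)\}$ does \emph{not} by itself tell you they coincide. The dangerous case is $Z(s, W_v, \Phi_{N_\pi}) = 1$ while $L(s,\pi) = L_E(s,1)$: this is perfectly consistent with $L(s,\pi)$ being the gcd, since $1 = L_E(s,1)\cdot(1-q^{-2s})$ lies in the fractional ideal generated by $L_E(s,1)$. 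Your proposal waves at ``reading off the dichotomy from the Hecke eigenvalue, matching Ishikawa's computation,'' but neither ingredient delivers this. Proposition~\ref{prop:L_sc} (Ishikawa) only constrains $L(s,\pi)$ to the two-element set; it gives no criterion distinguishing them. And the Hecke eigenvalue $\lambda$ determines $Z(s, W_v)$ through Proposition~\ref{prop:zeta}, not $L(s,\pi)$ directly; there is no a priori dictionary between $\lambda \in \{0, -q^2\}$ and the two possible values of $L(s,\pi)$ without further input.

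The paper closes this gap with an argument your proposal omits entirely: the functional equation and the fact that $\varepsilon(s,\pi,\psi_F,\psi_E)$ is a monomial in $q^{-2s}$ (Proposition~\ref{prop:mono}). Assuming for contradiction that $Z(s,W_v,\Phi_{N_\pi}) = 1$ and $L(s,\pi) = L_E(s,1)$, Proposition~\ref{prop:zeta2} and the functional equation~(\ref{eq:fe}) force
\[
\varepsilon(s,\pi,\psi_F,\psi_E) = q^{-2N_\pi(s-1/2)}\,\frac{L_E(s,1)}{L_E(1-s,1)},
\]
which is not a monomial --- contradiction. You need this (or an equivalent argument that directly identifies $L(s,\pi)$) to complete the proof; without it, the claim that the two dichotomies line up is unjustified.
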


If Conjecture~\ref{conj:main} is true, then
we  obtain a formula of the $\varepsilon$-factors,
which says that the exponents of $q^{-2s}$ of the 
$\varepsilon$-factors for generic representations agree with  their conductors.
\begin{thm}\label{thm:main2}
Let $E$ be the unramified quadratic extension over a non-archimedean 
local field $F$ of characteristic zero and of odd residual characteristic.
Let $\psi_E$ and $\psi_F$ be additive characters of 
$E$ and $F$ with conductors $\ri_E$ and $\ri_F$ respectively.
Assume that Conjecture~\ref{conj:main} is true
for
an irreducible generic representation $\pi$
of $\mathrm{U}(2,1)(E/F)$.
Then we have
\begin{eqnarray*}
\varepsilon(s, \pi, \psi_F, \psi_E)
= q_E^{-N_\pi(s-1/2)},
\end{eqnarray*}
where $N_\pi$ is the conductor of $\pi$
and $q_E$ is the cardinality of the residue field of $E$.
\end{thm}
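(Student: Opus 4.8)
The plan is to substitute the newform supplied by Conjecture~\ref{conj:main} into the functional equation (\ref{eq:fe}), simplify the resulting expression using Proposition~\ref{prop:zeta2}, and read off the $\varepsilon$-factor from the formula (\ref{eq:epsilon}). Write $N = N_\pi$ throughout.

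First I would invoke Conjecture~\ref{conj:main} to fix a newform $v$ for $\pi$ with $Z(s, W_v, \Phi_N) = L(s, \pi)$. Since $v$ is fixed by $K_N$ and $K_{N, H} = K_N \cap H \subseteq K_N$, the associated Whittaker function satisfies $W_v(hk) = l(\pi(h)\pi(k)v) = l(\pi(h)v) = W_v(h)$ for $k \in K_{N, H}$; that is, $W_v$ is fixed by $K_{N, H}$. Hence Proposition~\ref{prop:zeta2} applies with $n = N$ (its hypothesis that $\psi_F$ have conductor $\ri_F$ is part of the hypothesis of the theorem), giving
\[
Z(1-s, W_v, \hat{\Phi}_N) = q^{-2N(s-1/2)} Z(1-s, W_v, \Phi_N).
\]

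Next, $Z(s, W_v, \Phi_N)$ and $L(s, \pi)$ are equal as elements of $\C(q^{-2s})$. The change of variable $s \mapsto 1-s$ amounts to the substitution $q^{-2s} \mapsto q^{-2}q^{2s}$ on $\C(q^{-2s})$, so it preserves this identity: $Z(1-s, W_v, \Phi_N) = L(1-s, \pi)$. Plugging $W = W_v$, $\Phi = \Phi_N$ into (\ref{eq:fe}) and combining with the two identities just obtained,
\[
\gamma(s, \pi, \psi_F, \psi_E) L(s, \pi) = Z(1-s, W_v, \hat{\Phi}_N) = q^{-2N(s-1/2)} L(1-s, \pi),
\]
so that $\gamma(s, \pi, \psi_F, \psi_E) = q^{-2N(s-1/2)} L(1-s, \pi)/L(s, \pi)$, the division making sense because $L(s, \pi) = 1/P(q^{-2s})$ is a unit in $\C(q^{-2s})$. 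Substituting this into (\ref{eq:epsilon}) cancels the two $L$-factors and leaves $\varepsilon(s, \pi, \psi_F, \psi_E) = q^{-2N(s-1/2)}$; since $q_E = q^2$, this is exactly $q_E^{-N_\pi(s-1/2)}$.

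Granting Conjecture~\ref{conj:main}, the argument is formal, so I do not expect a genuine obstacle. The one point that deserves a moment's care is the substitution $s \mapsto 1-s$: it is legitimate precisely because the zeta integrals are being manipulated as rational functions in $q^{-2s}$, not as integrals, whose domain of absolute convergence would move under $s \mapsto 1-s$. The real content of the statement lies upstream, in Conjecture~\ref{conj:main} itself and in Proposition~\ref{prop:zeta2}, whose proof already packaged the factor $q^{-2n(s-1/2)}$ coming from the Fourier transform of $\Phi_n$.
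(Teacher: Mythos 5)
Your argument is correct and is, in substance, the same as the paper's: fix the newform $v$ from Conjecture~\ref{conj:main}, apply Proposition~\ref{prop:zeta2} with $n = N_\pi$ and the identity $Z(s,W_v,\Phi_{N_\pi}) = L(s,\pi)$ (also at $1-s$, reading it as an equality of rational functions in $q^{-2s}$) to compute $\gamma$, and substitute into (\ref{eq:epsilon}). The only addition you make is the explicit check that $W_v$ is $K_{N_\pi,H}$-fixed so Proposition~\ref{prop:zeta2} applies; the paper leaves this implicit, and your verification is accurate.
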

\begin{proof}
By assumption,
there exists a newform $v$ for $\pi$ such that
$Z(s, W_v, \Phi_{N_\pi}) = L(s, \pi)$.
By Proposition~\ref{prop:zeta2} and the assumption,
we have 
\begin{eqnarray*}
Z(1-s, W_v, \hat{\Phi}_{N_\pi})
& = & q^{-2N_\pi(s-1/2)} Z(1-s, W_v, {\Phi}_{N_\pi})\\
& = &q^{-2N_\pi(s-1/2)} L(1-s, {\pi}).
\end{eqnarray*}
So we get
\begin{eqnarray*}
\varepsilon(s, \pi, \psi_F, \psi_E) & = &
\gamma(s, \pi, \psi_F, \psi_E) \frac{L(s, \pi)}{L(1-s,{\pi})}\\
& = & 
\frac{Z(1-s, W, \hat{\Phi}_{N_\pi})}{Z(s, W, \Phi_{N_\pi})}
\frac{L(s, \pi)}{L(1-s, {\pi})}\\
& = & 
q^{-2N_\pi(s-1/2)}.
\end{eqnarray*}
This proves the theorem.
\end{proof}

\Section{Zeta integrals of newforms}\label{sec:hecke}
In this section,
we give a formula of zeta integrals of newforms
for generic representations $(\pi, V)$ of $G$ which satisfy 
$N_\pi \geq 2$ and $N_\pi > n_\pi$.
In subsection~\ref{subsec:hecke1},
we introduce the Hecke operator $T$ on $V(n)$
and give its explicit description.
If $n = N_\pi$, then this operator is scalar
and has the Hecke eigenvalue.
In subsection~\ref{subsec:hecke2},
we consider the level lowering operator $\delta: V(n) \rightarrow V(n-1)$
and give its explicit formula
when $n \geq 2$ and $n > n_\pi$.
Combining these results,
in subsection~\ref{subsec:hecke3},
we get 
a recursion
formula of the values of the Whittaker functions associated to newforms
at diagonal matrices (Lemma~\ref{lem:rec}),
which gives an explicit formula of zeta integrals of newforms 
in terms of Hecke eigenvalues
(Proposition~\ref{prop:zeta}).
The results in this section are strongly inspired by those in \cite{RS} 
sections 6 and 7.
\subsection{Hecke operator}\label{subsec:hecke1}
From now on,
we fix a non-trivial additive character $\psi_E$ of 
$E$ 
with conductor $\ri_E$.
Put 
\[
\zeta
=
\left(
\begin{array}{ccc}
\p & & \\
& 1 & \\
& & \p^{-1}
\end{array}
\right).
\]
Let $(\pi, V)$ be an irreducible generic representation
of $G$
and $n$ a non-negative integer.
We define {\it the Hecke operator} $T$ on $V(n)$ by
\[
Tv = \frac{1}{\mathrm{vol}(K_n)}\int_{K_n \zeta K_n} \pi(k) v  dk,\ v \in V(n).
\]
Using the bijection
$K_n / K_n \cap \zeta K_n \zeta^{-1}
\simeq K_n \zeta K_n /K_n;
k (K_n \cap \zeta K_n \zeta^{-1})\mapsto k\zeta K_n$,
we can write $Tv$ as 
\begin{eqnarray}\label{eq:T}
Tv = \sum_{k \in K_n / K_n \cap \zeta K_n \zeta^{-1}}\pi(k\zeta)v.
\end{eqnarray}
We set
\[
t_n
= \left(
\begin{array}{ccc}
 & & \p^{-n}\\
 & 1 & \\
 \p^n & & 
\end{array}
\right) \in K_n.
\]
\begin{lem}\label{lem:hecke_coset}
Suppose that $n \geq 1$.
Then a complete set of representatives for 
$K_n / K_n \cap \zeta K_n \zeta^{-1}$
is given by
\[
t_n u(y, z\e-y\overline{y}/2)\ {and}\ u(a, b\e-a\overline{a}/2),
\]
where $y, a \in \ri_E/\mi_E$, $z \in \mi_F^{1-n}/\mi_F^{2-n}$
and $b \in \mi_F^{-n}/\mi_F^{2-n}$.
\end{lem}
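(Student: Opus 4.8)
The statement is a coset decomposition of $K_n/(K_n \cap \zeta K_n \zeta^{-1})$ for $n \geq 1$. The natural approach is to compute $\zeta K_n \zeta^{-1}$ explicitly as a congruence subgroup, identify its intersection with $K_n$, count the index, and then verify that the proposed representatives are pairwise inequivalent and exhaust the quotient. First I would conjugate the matrix description of $K_n$ by $\zeta = \mathrm{diag}(\p, 1, \p^{-1})$: conjugation multiplies the $(i,j)$ entry by $\p^{e_i - e_j}$ with $(e_1, e_2, e_3) = (1, 0, -1)$, so the $(1,3)$-entry condition $\mi_E^{-n}$ becomes $\mi_E^{-n+2}$, the $(3,1)$-entry condition $\mi_E^n$ becomes $\mi_E^{n-2}$, and the off-diagonal entries in the middle row/column shift by $\mi_E^{\pm 1}$. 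Intersecting with $K_n \subset G$ again, the binding constraints are exactly at the positions where $\zeta K_n \zeta^{-1}$ is more restrictive than $K_n$, and one reads off $[K_n : K_n \cap \zeta K_n\zeta^{-1}]$; I expect this index to equal $q_E^2 + q_E = q^4 + q^2$, matching the count of parameters in the statement ($y, a$ range over $\ri_E/\mi_E$, which has $q_E = q^2$ elements; $z$ over $\mi_F^{1-n}/\mi_F^{2-n}$, which has $q$ elements; $b$ over $\mi_F^{-n}/\mi_F^{2-n}$, which has $q^2$ elements — giving $q^2 \cdot q + q^2 \cdot q^2$).

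\textbf{Main steps.} The concrete steps would be: (1) write down $\zeta K_n \zeta^{-1} \cap G$ as an explicit congruence subgroup and compute $L := K_n \cap \zeta K_n\zeta^{-1}$; (2) observe that the unipotent elements $u(a, b\e - a\overline{a}/2)$ with the stated ranges of $a, b$ lie in $K_n$ (one checks $x = a \in \ri_E$ and $y = b\e - a\overline{a}/2 \in \mi_E^{-n}$ using $b \in \mi_F^{-n}$ and the fact that $\overline{y} + y + x\overline{x} = 0$ is automatically satisfied by this parametrization of $U$ — here the choice $z\e - y\overline{y}/2$ for the $y$-coordinate is precisely what makes $u(x,y) \in U$ with $x$, $\mathrm{Im}$-part specified), and similarly that $t_n u(y, z\e - y\overline{y}/2) \in K_n$; (3) show these $q^4 + q^2$ elements represent distinct cosets modulo $L$, by computing products $g_1^{-1} g_2$ and checking when they fall in $L$; (4) conclude by the index count that they form a complete set. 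For step (3) the key subcase is separating the $t_n$-coset family from the $U$-coset family: an element $t_n u(\cdots)$ has a nonzero bottom-left entry of valuation exactly $-n + (\text{something} \geq 0)$ — more precisely $t_n$ has $(3,1)$-entry $\p^n$, so products with unipotents stay in the "big cell" relative to $\hat U$, whereas $u(a, \cdots)$ lies in $U K_n$-type position; comparing $\hat u$-components (the $(2,1)$ and $(3,1)$ entries) modulo the appropriate powers of $\mi_E$ separates the two families and, within each, separates the parameters.

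\textbf{The main obstacle.} The delicate point is bookkeeping the twisted unitary condition $y + \overline{y} + x\overline{x} = 0$ throughout: the representatives are not arbitrary upper-triangular matrices but must lie in $U$, so the "$y$-coordinate" is constrained to $z\e + (\text{trace-zero part forced by } x)$, and I must verify both that the proposed $z, b$ ranges are the right ones (i.e. that varying $z$ over $\mi_F^{1-n}/\mi_F^{2-n}$ exactly captures the $(1,3)$-entry ambiguity modulo $L$) and that no two representatives coincide once this constraint is imposed. A secondary subtlety is that $p \neq 2$ is used implicitly when writing the imaginary part as $-x\overline{x}/2$ (division by $2$), so I would flag where the odd residual characteristic hypothesis enters. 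Beyond that, the argument is a finite computation with congruence subgroups and I expect it to go through by direct matrix multiplication; the analogous computation in \cite{RS} sections 6–7 for $\mathrm{GSp}(4)$ is the template.
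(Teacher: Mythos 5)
Your overall plan---compute $K_n\cap\zeta K_n\zeta^{-1}$ explicitly, count the index, check that the proposed representatives lie in $K_n$, show they are pairwise inequivalent, and match the count---is a legitimate route. However, there is a concrete arithmetic error that would derail you at the final matching step: you assert the index is $q_E^2+q_E=q^4+q^2$, yet your own parameter count is $q^2\cdot q+q^2\cdot q^2=q^3+q^4$, which is not the same number. The latter is correct. The group here has residual root system of type $BC_1$ (not $A_1$ over $E$), and the coweight $\zeta$ hits both a short and a long affine root direction; the index $[K_n:K_n\cap\zeta K_n\zeta^{-1}]$ works out to $q^3(q+1)=q^4+q^3$. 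If you carry through your plan with the incorrect target index $q^4+q^2$, the representative count will appear to overshoot and the "conclude by the index count" step will fail. So you must recompute the index, and once you do, your counting argument closes.

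Regarding approach: the paper does not compute the index as a separate step at all. It inserts an intermediate open compact subgroup $K'$ between $K_n\cap\zeta K_n\zeta^{-1}$ and $K_n$ and proves two claims: a set of $q+1$ representatives for $K_n/K'$ (distinguished by whether the $(3,3)$-entry is a unit or lies in $\mi_E$) and a set of $q^3$ representatives for $K'/(K_n\cap\zeta K_n\zeta^{-1})$ (read off from the $(1,2)$- and $(1,3)$-entries, using the unitary relation to solve for the remaining freedom). Completeness is shown constructively in each step by exhibiting, for an arbitrary $k$, which representative's coset it lies in. This sidesteps any index computation and any abstract distinctness-plus-counting argument; in exchange one must handle the case $n=1$ separately via the Iwahori decomposition. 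Your criterion for separating the two families of representatives is close to the paper's but not quite precisely stated: the cleanest invariant is whether $k_{33}$ is a unit (the $u(a,\cdot)$ family) or lies in $\mi_E$ (the $t_n u(\cdot)$ family); the $(2,1)$- and $(3,1)$-entries are less directly useful since both families sit in $t_n U\cup U$, where those entries vanish. Finally, your observation about $p\neq 2$ (division by $2$ in the parametrization of $U$) is correct and is indeed where the odd residual characteristic enters.
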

\begin{proof}
Note that
\[
 K_n \cap \zeta K_n \zeta^{-1}
 =
 \left(
\begin{array}{ccc}
\ri_E & \mi_E &  \mi_E^{2-n}\\
\mi_E^n & 1+ \mi_E^n &  \mi_E\\
\mi_E^n & \mi_E^n &  \ri_E
\end{array}
\right)\cap G.
\]
Set 
\[
K'
 =
 \left(
\begin{array}{ccc}
\ri_E & \ri_E &  \mi_E^{1-n}\\
\mi_E^n & 1+ \mi_E^n &  \ri_E\\
\mi_E^n & \mi_E^n &  \ri_E
\end{array}
\right)\cap G.
\]
Clearly, the following two claims assert the lemma:
\begin{enumerate}
\item[(i)]
A complete set of representatives for 
$K_n / K'$
is given by the $q+1$ elements
$t_n$ {and} $u(0, x\e)$,
where 
$x \in \mi_F^{-n}/\mi_F^{1-n}$.

\item[(ii)]
We can take
a complete set of representatives for 
$K' / K_n \cap \zeta K_n \zeta^{-1}$
as the $q^3$ elements
$u(y, z\e-y\overline{y}/2)$,
where $y \in \ri_E/\mi_E$, $z \in \mi_F^{1-n}/\mi_F^{2-n}$.
\end{enumerate}
We shall prove these claims.
It is obvious that elements in (i) and (ii)
belong to pairwise distinct cosets in 
$K_n / K'$ and $K' / K_n \cap \zeta K_n \zeta^{-1}$
respectively.

(i)
We denote by $g_{ij}$ the $(i, j)$-entry of $g \in M_3(E)$.
Let $k \in K_n$.
If $k_{33} \in \mi_E$,
then we get $t_n k \in K'$, and hence $k \in t_n K'$.
Suppose that $k_{33} \in \ri_E^\times$.
Since $k$ lies in $G$,
we have $k_{13}\overline{k}_{33}+k_{23}\overline{k}_{23}
+k_{33}\overline{k}_{13} = 0$.
This implies 
$k_{13}\overline{k}_{33}
+k_{33}\overline{k}_{13} \in \ri_F$,
so that
$k_{13}\overline{k}_{33} \in \ri_F \oplus \mi_F^{-n}\e$.
By the assumption $k_{33} \in \ri_E^\times$,
we have
$k_{13}{k}_{33}^{-1} \in \ri_F \oplus \mi_F^{-n}\e$.
Thus there exists $x \in  \mi_F^{-n}$
such that 
$k_{13}{k}_{33}^{-1} -x\e \in \ri_F$.
Using the assumption again,
we get 
$k_{13} -x\e k_{33}\in \ri_E \subset \mi_E^{1-n}$.
This implies $k \in u(0, x\e)K'$.

(ii)
If $n = 1$,
then $K'$ lies in the standard Iwahori subgroup of $G$.
One can see that $K'$ has an Iwahori decomposition
$K' = (K'\cap \hat{U})(K'\cap T)(K'\cap U)$.
The assertion follows
because $(K'\cap \hat{U})(K'\cap T)
\subset K_n \cap \zeta K_n \zeta^{-1}$.

Suppose that $n \geq 2$.
For $k \in K'$,
we set $y = k_{22}^{-1}k_{12}$.
Then $y$ lies in  $\ri_E$.
Since $\det k \in E^{1}$, the element
$k_{33}$ must belong to $\ri_E^\times$.
As in the proof of (i),
we can take 
$x \in  \mi_F^{1-n}$
such that 
$k_{13} -x\e k_{33}\in \ri_E \subset \mi_E^{2-n}$.
Then one can easily check that
$k$ belongs to $u(y, x\e -y\overline{y}/2)(K_n \cap \zeta K_n \zeta^{-1})$.
This completes the proof.
\end{proof}

Lemma~\ref{lem:hecke_coset}
gives an explicit description of the Hecke operator $T$.
For $v \in V(n)$,
we set
\begin{eqnarray}
v' = \sum_{y\in \mi_E^{n-1}/\mi_E^{n}}
\sum_{z\in \mi_F^{n-1}/\mi_F^{n}}
\pi(\hat{u}(y, z\e-y\overline{y}/2)) v.
\end{eqnarray}
Then we get the following
\begin{lem}\label{lem:hecke1}
Suppose that $n \geq 1$.
Then we have
\[
Tv = \pi(\zeta^{-1}) v' +
\sum_{a \in \ri_E/\mi_E}\sum_{b \in \mi_F^{-n}/\mi_F^{2-n}}
\pi({u}(a, b\e-a\overline{a}/2)\zeta ) v,
\]
for $v \in V(n)$.
\end{lem}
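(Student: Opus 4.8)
The strategy is to transfer the Hecke operator $T$, given by the coset sum in (\ref{eq:T}), into the expression in the statement by manipulating the representatives produced in Lemma~\ref{lem:hecke_coset}. Recall that Lemma~\ref{lem:hecke_coset} gives two families of representatives for $K_n/K_n\cap\zeta K_n\zeta^{-1}$: the elements $t_n\,u(y,z\e-y\overline y/2)$ with $y\in\ri_E/\mi_E$, $z\in\mi_F^{1-n}/\mi_F^{2-n}$, and the elements $u(a,b\e-a\overline a/2)$ with $a\in\ri_E/\mi_E$, $b\in\mi_F^{-n}/\mi_F^{2-n}$. Substituting into (\ref{eq:T}) splits $Tv$ as a sum over the first family times $\pi(\zeta)$, plus a sum over the second family times $\pi(\zeta)$; the second sum is already exactly the displayed second term $\sum_{a}\sum_{b}\pi(u(a,b\e-a\overline a/2)\zeta)v$. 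So the whole content is to show that
\[
\sum_{y\in\ri_E/\mi_E}\ \sum_{z\in\mi_F^{1-n}/\mi_F^{2-n}} \pi\bigl(t_n\,u(y,z\e-y\overline y/2)\,\zeta\bigr)v \;=\; \pi(\zeta^{-1})\,v',
\]
where $v'=\sum_{y\in\mi_E^{n-1}/\mi_E^{n}}\sum_{z\in\mi_F^{n-1}/\mi_F^{n}}\pi(\hat u(y,z\e-y\overline y/2))v$.

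**Key steps.** First I would compute the conjugate $\zeta^{-1}t_n u(y,z\e-y\overline y/2)\zeta$ directly as a matrix product. Since $t_n=\left(\begin{smallmatrix}0&0&\p^{-n}\\0&1&0\\\p^n&0&0\end{smallmatrix}\right)$ is (up to the central-ish scaling built into $K_n$) the Weyl-type element swapping the two isotropic lines, conjugating $u$ by it should turn the upper unipotent $u$ into a lower unipotent $\hat u$, and conjugating that by $\zeta^{\pm1}$ rescales the entries: $x\mapsto \p^{\mp1}x$ on the super/sub-diagonal entry and $y\mapsto \p^{\mp2}y$ on the corner. Tracking these scalings carefully, one finds that $\zeta^{-1}t_n u(y,z\e-y\overline y/2)\zeta$ equals $\hat u(y',z'\e - y'\overline{y'}/2)$ for suitable $y',z'$, and — this is the crux — the parametrizing sets match: as $y$ runs over $\ri_E/\mi_E$ and $z$ over $\mi_F^{1-n}/\mi_F^{2-n}$, the image $(y',z')$ runs bijectively over $\mi_E^{n-1}/\mi_E^{n}\times \mi_F^{n-1}/\mi_F^{n}$. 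Using $\pi(k)v=v$ for $k\in K_n$ to absorb the part of $t_n$ that lies in $K_n$ (indeed $t_n\in K_n$ by the definition of $t_n$), I would rewrite $\pi(t_n u(\cdots)\zeta)v=\pi(\zeta^{-1})\pi\bigl(\zeta^{-1}t_n u(\cdots)\zeta\bigr)v$... more precisely $\pi(t_n u \zeta)v = \pi(\zeta^{-1})\cdot\pi(\zeta t_n u\zeta)v$ is not quite it; rather one writes $\zeta^{-1}\cdot(\zeta^{-1} t_n u \zeta) $ has the wrong shape, so the honest bookkeeping is $\pi(t_n u\zeta)v=\pi(\zeta^{-1}\cdot\zeta t_n u\zeta)v$ and then $\zeta t_n u\zeta$ needs to be recognized inside $\zeta K_n$; I expect the clean route is $\pi(t_n u\zeta)v = \pi(\zeta^{-1})\,\pi(\zeta\, t_n\, u\, \zeta)v$ with $\zeta t_n u\zeta = \hat u(y',z'\e-y'\overline{y'}/2)\cdot k$ for some $k\in K_n$, which kills $k$. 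Summing over the (now re-indexed) family yields precisely $\pi(\zeta^{-1})v'$, completing the identity.

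**Main obstacle.** The only real difficulty is the bookkeeping of the conjugation: verifying that $\zeta t_n u(y,z\e-y\overline y/2)\zeta$ is of the form $\hat u(\cdot,\cdot)$ times an element of $K_n$, and — most importantly — that the induced map on parameters is a bijection between the two index sets $\{y\in\ri_E/\mi_E,\ z\in\mi_F^{1-n}/\mi_F^{2-n}\}$ and $\{y\in\mi_E^{n-1}/\mi_E^{n},\ z\in\mi_F^{n-1}/\mi_F^{n}\}$. Both sets have cardinality $q\cdot q = q^2$ (using $\lvert\ri_E/\mi_E\rvert=q^2$... in fact $\lvert\ri_E/\mi_E\rvert = q_E = q^2$ and $\lvert\mi_F^{1-n}/\mi_F^{2-n}\rvert=q$, giving $q^3$; likewise $\lvert\mi_E^{n-1}/\mi_E^n\rvert\cdot\lvert\mi_F^{n-1}/\mi_F^n\rvert = q^2\cdot q = q^3$), so a cardinality count reduces the bijectivity to injectivity, which follows once the scaling $y\mapsto \p^{n-1}y$-type formula is pinned down. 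I do not anticipate any conceptual obstruction beyond this matrix computation, and I would present it compactly, relegating the entry-by-entry check to a one-line "a direct computation shows."
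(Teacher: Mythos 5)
Your proposal follows essentially the same route as the paper: expand $Tv$ via the Lemma~\ref{lem:hecke_coset} cosets, keep the second family unchanged, and turn the first family into $\pi(\zeta^{-1})v'$ by recognizing that $\zeta t_n u(y,z\e-y\overline y/2)\zeta$ is $\hat u(-\p^{n-1}\overline y,\p^{2(n-1)}(z\e-y\overline y/2))$ times $t_n\in K_n$, which is absorbed by $K_n$-invariance, and then reindexing $(y,z)\mapsto(-\p^{n-1}\overline y,\p^{2(n-1)}z)$. The paper performs the bookkeeping slightly differently (it inserts the right factor $t_n$ first, so $\zeta t_n u\zeta t_n=t_{n-1}u t_{n-1}$ is a clean conjugation, using $\zeta t_n=t_{n-1}$), but this is cosmetic and your argument is sound.
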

\begin{proof}
By (\ref{eq:T}) and Lemma~\ref{lem:hecke_coset},
we obtain
\begin{eqnarray*}
Tv & =& \sum_{\substack{y \in \ri_E/\mi_E\\z \in \mi_F^{1-n}/\mi_F^{2-n}}}
\pi(t_n{u}(y, z\e-y\overline{y}/2)\zeta ) v
+
\sum_{\substack{a \in \ri_E/\mi_E\\b \in \mi_F^{-n}/\mi_F^{2-n}}}
\pi({u}(a, b\e-a\overline{a}/2)\zeta ) v\\
 & =& \sum_{\substack{y \in \ri_E/\mi_E\\z \in \mi_F^{1-n}/\mi_F^{2-n}}}
\pi(\zeta^{-1}\zeta t_n{u}(y, z\e-y\overline{y}/2)\zeta t_n) v
+
\sum_{\substack{a \in \ri_E/\mi_E\\b \in \mi_F^{-n}/\mi_F^{2-n}}}
\pi({u}(a, b\e-a\overline{a}/2)\zeta ) v\\
 & =& \pi(\zeta^{-1})\sum_{\substack{y \in \ri_E/\mi_E\\z \in \mi_F^{1-n}/\mi_F^{2-n}}}
\pi(t_{n-1}{u}(y, z\e-y\overline{y}/2) t_{n-1}) v
+
\sum_{\substack{a \in \ri_E/\mi_E\\b \in \mi_F^{-n}/\mi_F^{2-n}}}
\pi({u}(a, b\e-a\overline{a}/2)\zeta ) v\\
 & =&
\pi(\zeta^{-1}) \sum_{\substack{y \in \mi_E^{n-1}/\mi_E^{n}\\z \in \mi_F^{n-1}/\mi_F^{n}}}
\pi(\hat{u}(y, z\e-y\overline{y}/2)) v+
\sum_{\substack{a \in \ri_E/\mi_E\\b \in \mi_F^{-n}/\mi_F^{2-n}}}
\pi({u}(a, b\e-a\overline{a}/2)\zeta ) v\\
 & =&
 \pi(\zeta^{-1}) v' +
\sum_{\substack{a \in \ri_E/\mi_E\\b \in \mi_F^{-n}/\mi_F^{2-n}}}
\pi({u}(a, b\e-a\overline{a}/2)\zeta ) v,
\end{eqnarray*}
as required.
\end{proof}

We shall consider the case when 
$n = N_\pi$.
Because $V(N_\pi)$ is one-dimensional,
there exists $\lambda \in \C$ such that
$T v = \lambda v$ for all $v\in V(N_\pi)$.
We call $\lambda$ {\it the Hecke eigenvalue of $T$}.
For a newform $v$ in $V(N_\pi)$,
we put
\begin{eqnarray}\label{eq:c}
c_i = W_v(\zeta^i),\ c'_i = W_{v'}(\zeta^i),\ i \in \Z.
\end{eqnarray}
Then we obtain the following
\begin{lem}\label{lem:hecke_rec}
Suppose that $N_\pi \geq 1$.
Then we have
\[
\lambda c_i = c'_{i-1} + q^4 c_{i+1},\ i \geq 0.
\]
\end{lem}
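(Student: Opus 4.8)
The plan is to apply the Whittaker functional of $v$, evaluated at the diagonal element $\zeta^i$, to both sides of $Tv = \lambda v$, using the explicit formula for $T$ from Lemma~\ref{lem:hecke1}. Throughout I would use the two elementary identities $W_{\pi(h)w}(g) = W_w(gh)$ and $W_w(ug) = \psi_E(u)W_w(g)$ for $u \in U$, both immediate from $W_w(g) = l(\pi(g)w)$ and $l \in \mathrm{Hom}_U(\pi,\psi_E)$, together with the conjugation rule $\zeta^i u(x,y)\zeta^{-i} = u(\p^i x, \p^{2i}y)$, which is a one-line matrix check using that $\p \in F^\times$ is fixed by $\mathrm{Gal}(E/F)$ (and one checks the defining relation $\p^{2i}y + \overline{\p^{2i}y} + \p^i x\overline{\p^i x} = \p^{2i}(y+\overline{y}+x\overline{x}) = 0$ is preserved).

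First I would dispatch the term $\pi(\zeta^{-1})v'$ appearing in Lemma~\ref{lem:hecke1}: evaluating the associated Whittaker function at $\zeta^i$ gives $W_{v'}(\zeta^{i}\zeta^{-1}) = W_{v'}(\zeta^{i-1}) = c'_{i-1}$ by the definition (\ref{eq:c}). Next I would handle the double sum over $a \in \ri_E/\mi_E$ and $b \in \mi_F^{-n}/\mi_F^{2-n}$. For each pair, $W_{\pi(u(a,b\e-a\overline{a}/2)\zeta)v}(\zeta^i) = W_v(\zeta^i u(a,b\e-a\overline{a}/2)\zeta)$; rewriting
\[
\zeta^i u(a,b\e-a\overline{a}/2)\zeta = u(\p^i a,\p^{2i}(b\e-a\overline{a}/2))\,\zeta^{i+1}
\]
via the conjugation rule and peeling off the unipotent part on the left yields $\psi_E(\p^i a)\,W_v(\zeta^{i+1})$. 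Here the hypothesis $i \geq 0$ enters: since the representatives $a$ lie in $\ri_E$ we have $\p^i a \in \ri_E$, hence $\psi_E(\p^i a) = 1$ because $\psi_E$ has conductor $\ri_E$. So each summand equals $c_{i+1}$, and the number of summands is $|\ri_E/\mi_E|\cdot|\mi_F^{-n}/\mi_F^{2-n}| = q^2 \cdot q^2 = q^4$.

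Collecting the two contributions gives $\lambda c_i = W_{Tv}(\zeta^i) = c'_{i-1} + q^4 c_{i+1}$, which is the asserted recursion. I do not anticipate a genuine obstacle here; the only point that needs care — and precisely the reason the statement is restricted to $i \geq 0$ — is the triviality of $\psi_E$ on $\p^i a$, which would fail for $i < 0$. The remaining bookkeeping (getting the conjugation exponents and the coset counts right) is routine.
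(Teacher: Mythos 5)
Your proof is correct and follows essentially the same route as the paper: expand $Tv$ via Lemma~\ref{lem:hecke1}, conjugate the unipotent representatives past $\zeta^i$ using $\zeta^i u(x,y)\zeta^{-i}=u(\p^ix,\p^{2i}y)$, peel off the left unipotent factor via $\psi_E$, and use $i\geq 0$ together with the conductor of $\psi_E$ to make the character values trivial. The only cosmetic difference is that the paper re-indexes the sum over $a$ and $b$ to $\mi_E^i/\mi_E^{i+1}$ and $\mi_F^{2i-n}/\mi_F^{2i+2-n}$ before extracting the character value, whereas you keep the original index set and evaluate $\psi_E(\p^i a)$ directly; both bookkeeping choices lead to the same count $q^2\cdot q^2=q^4$.
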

\begin{proof}
Set $n= N_\pi$.
By Lemma~\ref{lem:hecke1},
we have
\begin{eqnarray*}
\lambda W_v(\zeta^i) & = & W_{\lambda v}(\zeta^i) =  W_{Tv}(\zeta^i)\\
& = & 
W_{v'}(\zeta^{i-1}) +
\sum_{\substack{a \in \ri_E/\mi_E\\b \in \mi_F^{-n}/\mi_F^{2-n}}}
W(\zeta^i {u}(a, b\e-a\overline{a}/2)\zeta ) \\
 & = & 
W_{v'}(\zeta^{i-1}) +
\sum_{\substack{a \in \ri_E/\mi_E\\b \in \mi_F^{-n}/\mi_F^{2-n}}}
W_v(\zeta^{i}{u}(a, b\e-a\overline{a}/2)\zeta^{-i}\zeta^{i+1} ) \\
 & = & 
W_{v'}(\zeta^{i-1}) +
\sum_{\substack{a \in \mi_E^i/\mi_E^{i+1}\\b \in \mi_F^{2i-n}/\mi_F^{2i+2-n}}}
W_v ({u}(a, b\e-a\overline{a}/2)\zeta^{i+1}) \\
 & = & 
W_{v'} (\zeta^{i-1}) +
\sum_{\substack{a \in \mi_E^i/\mi_E^{i+1}\\b \in \mi_F^{2i-n}/\mi_F^{2i+2-n}}}
\psi_E(a)
W_v (\zeta^{i+1})\\
& = & 
W_{v'} (\zeta^{i-1}) +
q^2\sum_{a \in \mi_E^i/\mi_E^{i+1}}
\psi_E(a)
W_v (\zeta^{i+1}),
\end{eqnarray*}
for $i \in \Z$.
Since we are assuming that $\psi_E$ has conductor $\ri_E$,
we obtain
\begin{eqnarray*}
\lambda W_v(\zeta^i) & = & 
W_{v'}(\zeta^{i-1}) +
q^4
W_v (\zeta^{i+1}),
\end{eqnarray*}
for $i \geq 0$.
This proves the lemma.
\end{proof}

\subsection{Level lowering operator}\label{subsec:hecke2}
Let $(\pi, V)$ be an irreducible generic representation of $G$
and let
$n$ be an integer greater than $n_\pi$.
Then $Z_{n-1}$ acts on $V$ trivially,
and hence every vector in $V(n)$ is fixed by
$Z_{n-1}K_n$.
We define {\it the level lowering operator} $\delta:
V(n) \rightarrow V(n-1)$ by
\[
\delta v = 
\frac{1}{\mathrm{vol}(K_{n-1}\cap (Z_{n-1}K_n))}
\int_{K_{n-1}}\pi(k)v dv,\ v \in V(n).
\]
By the assumption $n > n_{\pi}$,
we can write $\delta v$ as 
\begin{eqnarray}\label{eq:delta}
\delta v 
= \sum_{K_{n-1}/K_{n-1}\cap (Z_{n-1}K_n)}\pi(k)v,\
v \in V(n).
\end{eqnarray}

\begin{lem}\label{lem:level_coset}
Suppose that $n \geq 2$.
Then a complete set of representatives for 
$K_{n-1}/K_{n-1}\cap (Z_{n-1}K_n)$
is given by
\[
t_{n-1} \hat{u}(y, -y\overline{y}/2)\ {and}\ \hat{u}(a, b\e-a\overline{a}/2),
\]
where $y, a \in \mi_E^{n-1}/\mi_E^n$
and  $b \in \mi_F^{n-1}/\mi_F^n$.
\end{lem}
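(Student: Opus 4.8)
The plan is to imitate the proof of Lemma~\ref{lem:hecke_coset}. Write $\mathfrak{H}=K_{n-1}\cap(Z_{n-1}K_n)$, so that the statement is a description of a set of representatives for $K_{n-1}/\mathfrak{H}$. The argument splits into two parts: first identify $\mathfrak{H}$ explicitly as a congruence subgroup, and then enumerate the cosets by a case analysis on matrix entries.

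For the first part I would show
\[
\mathfrak{H}=Z_{n-1}\,(K_{n-1}\cap K_n)=
\left(
\begin{array}{ccc}
\ri_E & \ri_E & \mi_E^{1-n}\\
\mi_E^n & 1+\mi_E^{n-1} & \ri_E\\
\mi_E^n & \mi_E^n & \ri_E
\end{array}
\right)\cap G.
\]
Since $Z_{n-1}\subseteq K_{n-1}$, an element of $\mathfrak{H}$ has the form $zk$ with $z\in Z_{n-1}$ and $k\in K_{n-1}\cap K_n$, from which the displayed shape is immediate (only the $(2,2)$-entry is relaxed, because $z$ is a unit in $1+\mi_E^{n-1}$). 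Conversely, given $g$ in the displayed group one must produce $z\in Z_{n-1}=E^{1}\cap(1+\mi_E^{n-1})$ with $z\equiv g_{22}\pmod{\mi_E^n}$, for then $z^{-1}g\in K_n$. The identity ${}^t\overline{g}Jg=J$ at the $(2,2)$-place gives $N_{E/F}(g_{22})=1-\tr_{E/F}(\overline{g}_{12}g_{32})$; as $g_{32}\in\mi_E^n$ and $2n-2\geq n$ for $n\geq 2$, this forces $N_{E/F}(g_{22})\in 1+\mi_F^n$ and hence $\tr_{E/F}(g_{22}-1)\in\mi_F^n$. Since $E/F$ is unramified, the norm $1+\mi_E^{n-1}\to 1+\mi_F^{n-1}$ and the trace $\mi_E^n\to\mi_F^n$ are both surjective, and a successive-approximation argument then corrects $g_{22}$ inside $g_{22}+\mi_E^n$ into $E^{1}$, yielding the required $z$. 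This is precisely where the hypothesis $n\geq 2$ is used, and it is also what makes $\mathfrak{H}$ strictly larger than $K_{n-1}\cap K_n$.

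For the enumeration I would run the analogue of the $k_{33}$-dichotomy from Lemma~\ref{lem:hecke_coset}, now according to whether $g_{11}$ is a unit; since $\mathfrak{H}$ has unit $(1,1)$-entry and $(2,1)$-, $(3,1)$-entries in $\mi_E^n$, the property $g_{11}\in\ri_E^{\times}$ depends only on the coset $g\mathfrak{H}$. If $g_{11}\in\ri_E^{\times}$, set $x=g_{21}g_{11}^{-1}$ and $y=g_{31}g_{11}^{-1}$; the relation $y+\overline{y}+x\overline{x}=0$ lets one write $y=b\e-a\overline{a}/2$ with $a=x$, the element $\hat{u}(x,y)^{-1}g$ has first column ${}^{t}(g_{11},0,0)$ and hence lies in $\mathfrak{H}$, and reducing $a\bmod\mi_E^n$ and $b\bmod\mi_F^n$ gives $g\mathfrak{H}=\hat{u}(a,b\e-a\overline{a}/2)\mathfrak{H}$ with $a\in\mi_E^{n-1}/\mi_E^{n}$, $b\in\mi_F^{n-1}/\mi_F^{n}$. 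If $g_{11}\in\mi_E$, comparing valuations in $\det g\in E^{1}$ shows $g_{13}$ and $g_{31}$ are of exact valuations $1-n$ and $n-1$, whence $t_{n-1}g$ again lies in $K_{n-1}$, now with unit $(1,1)$-entry and with $(3,1)$-entry in $\mi_E^{n}$; applying the previous case to $t_{n-1}g$ forces $b=0$ and, using $t_{n-1}^{-1}=t_{n-1}$, gives $g\mathfrak{H}=t_{n-1}\hat{u}(y,-y\overline{y}/2)\mathfrak{H}$ with $y\in\mi_E^{n-1}/\mi_E^{n}$. Finally I would verify that the listed $q^2+q^3$ elements lie in pairwise distinct cosets: the two families are separated by the coset invariant "$g_{11}\in\ri_E^{\times}$", and distinctness within a family follows from $\hat{u}(x,y)^{-1}\hat{u}(x',y')=\hat{u}(x'-x,\overline{y}+y'+\overline{x}x')$ together with $y+\overline{y}+x\overline{x}=0$. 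As the case analysis already puts every $g\in K_{n-1}$ into one of these cosets, they form a complete set of representatives.

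The main obstacle is the first part, namely proving that the $(2,2)$-entry of a member of $\mathfrak{H}$ need only lie in $1+\mi_E^{n-1}$ rather than $1+\mi_E^{n}$: this amounts to the solvability of $z\in E^{1}$ with $z\equiv g_{22}\pmod{\mi_E^n}$, which relies on norm-surjectivity for the unramified extension and genuinely requires $n\geq 2$. The coset enumeration itself is routine but needs careful tracking of the Hermitian relation defining $\hat{U}$.
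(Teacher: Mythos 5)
Your proof is correct and rests on the same basic ideas as the paper's: the identification of $\mathfrak{H}=K_{n-1}\cap(Z_{n-1}K_n)$ as a congruence subgroup, and a coset enumeration driven by the dichotomy on whether $g_{11}$ is a unit, with $t_{n-1}$ used to reduce the non-unit case to the unit case. The organizational difference is that the paper inserts an intermediate group $K''$ (matching $K_{n-1}$ except with $(3,1)$-entry tightened to $\mi_E^n$) and splits the enumeration into $K_{n-1}/K''$ (giving the $q+1$ cosets from the $g_{11}$ dichotomy) followed by $K''/\mathfrak{H}$ (giving $q^2$ cosets via $y=k_{11}^{-1}k_{21}$), then multiplies out and uses $\hat{u}(0,x\e)\hat{u}(y,-y\overline{y}/2)=\hat{u}(y,x\e-y\overline{y}/2)$; you instead normalize in one pass by $\hat{u}(x,y)$ with $x=g_{21}g_{11}^{-1}$, $y=g_{31}g_{11}^{-1}$, killing both the $(2,1)$- and $(3,1)$-entries simultaneously. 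The trade-off is purely structural. Where you add genuine value is in the identification of $\mathfrak{H}$: the paper dismisses this with ``one can easily check,'' whereas you correctly isolate the nontrivial point — that a member of the displayed group can be corrected by $z\in Z_{n-1}$ into $K_n$, using $N_{E/F}(g_{22})\in 1+\mi_F^n$ and the surjectivity of norm and trace in the unramified extension — and note that this is where $n\geq 2$ is used.

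One step you elide deserves a sentence: from ``$\hat{u}(x,y)^{-1}g$ has first column ${}^{t}(g_{11},0,0)$'' it does not immediately follow that it lies in $\mathfrak{H}$, since $\mathfrak{H}$ also requires the $(3,2)$-entry to lie in $\mi_E^n$ while $K_{n-1}$ only gives $\mi_E^{n-1}$. One must invoke the $(1,2)$-entry of the Hermitian relation ${}^t\overline{g'}Jg'=J$, which with $g'_{21}=g'_{31}=0$ and $g'_{11}\in\ri_E^\times$ forces $g'_{32}=0$. (The paper's step (ii) needs the same Hermitian input, also unstated.) Also, the clause ``$2n-2\geq n$'' is attached to the wrong place in your first part: the trace $\tr_{E/F}(\overline{g}_{12}g_{32})\in\mi_F^n$ needs only $g_{32}\in\mi_E^n$; it is the subsequent passage from $N_{E/F}(g_{22})\in 1+\mi_F^n$ to $\tr_{E/F}(g_{22}-1)\in\mi_F^n$ that uses $N_{E/F}(g_{22}-1)\in\mi_F^{2n-2}\subseteq\mi_F^n$.
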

\begin{proof}
One can easily check that
\[
K_{n-1}\cap (Z_{n-1}K_n)
 =
 \left(
\begin{array}{ccc}
\ri_E & \ri_E &  \mi_E^{1-n}\\
\mi_E^{n} & 1+ \mi_E^{n-1} &  \ri_E\\
\mi_E^{n} & \mi_E^{n} &  \ri_E
\end{array}
\right)\cap G.
\]
We set 
\[
K''
 =
 \left(
\begin{array}{ccc}
\ri_E & \ri_E &  \mi_E^{1-n}\\
\mi_E^{n-1} & 1+ \mi_E^{n-1} &  \ri_E\\
\mi_E^n & \mi_E^{n-1} &  \ri_E
\end{array}
\right)\cap G.
\]
Then it suffices to show the following two claims:
\begin{enumerate}
\item[(i)]
We can take
a complete set of representatives for 
$K_{n-1} / K''$
as the $q+1$ elements
$t_{n-1}$ {and} $\hat{u}(0, x\e)$,
where 
$x \in \mi_F^{n-1}/\mi_F^{n}$.

\item[(ii)]
A complete set of representatives for 
$K''/K_{n-1}\cap (Z_{n-1}K_n)$
is given by the $q^2$ elements
$\hat{u}(y, -y\overline{y}/2)$,
where $y \in \mi_E^{n-1}/\mi^n_E$.
\end{enumerate}
We shall prove these claims.
Clearly, the
elements in (i) and (ii) are contained in pairwise disjoint cosets
in $K_{n-1} / K''$ and $K''/K_{n-1}\cap (Z_{n-1}K_n)$
respectively.

(i)
Let $k \in K_{n-1}$.
If $k_{11} \in \mi_E$,
then we have
$t_{n-1} k \in K''$. So we get $k \in t_{n-1} K''$.
Suppose that $k_{11} \in \ri_E^\times$.
Since $k$ lies in $G$,
we have $k_{11}\overline{k}_{31}+k_{21}\overline{k}_{21}
+k_{31}\overline{k}_{11} = 0$.
This implies 
$k_{11}\overline{k}_{31}
+k_{31}\overline{k}_{11} \in \mi^{2n-2}_F$,
and hence
$k_{31}\overline{k}_{11} \in \mi^{2n-2}_F \oplus \mi_F^{n-1}\e$.
By the assumption $k_{11} \in \ri_E^\times$,
we have
$k_{31}{k}_{11}^{-1} \in \mi^{2n-2}_F \oplus \mi_F^{n-1}\e$.
So there exists $x \in  \mi_F^{n-1}$
such that 
$k_{31}{k}_{11}^{-1} -x\e \in \mi^{2n-2}_F$.
Using the assumption again,
we get 
$k_{31} -x\e k_{11}\in \mi^{2n-2}_E \subset \mi_E^{n}$.
This implies $k \in \hat{u}(0, x\e)K''$.

(ii)
Let $k \in K''$.
Since $\det k$ lies in $E^{1}$,
we have
$k_{11} \in \ri_E^\times$.
Set $y = k_{11}^{-1}k_{21} \in \mi_E^{n-1}$.
Then one can easily check that
$k$ belongs to $\hat{u}(y, -y\overline{y}/2)(K_{n-1}\cap (Z_{n-1}K_n))$.
This completes the proof.
\end{proof}

By Lemma~\ref{lem:level_coset},
we get an explicit formula of the level lowering operator $\delta$.
\begin{lem}\label{lem:level1}
Suppose that $n \geq 2$ and $n > n_\pi$.
Then we have
\[
\delta v = v' + 
 \sum_{y \in \mi_E^{-1}/\ri_E}
\pi( \zeta {u}(y, -y\overline{y}/2)) v,
\]
for $v \in V(n)$.
\end{lem}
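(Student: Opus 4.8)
The plan is to expand $\delta v$ directly from (\ref{eq:delta}) by feeding in the coset representatives supplied by Lemma~\ref{lem:level_coset}. Since $n \ge 2$ and $n > n_\pi$, this gives
\[
\delta v = \sum_{y \in \mi_E^{n-1}/\mi_E^n}\pi\bigl(t_{n-1}\hat{u}(y, -y\overline{y}/2)\bigr)v
+ \sum_{\substack{a \in \mi_E^{n-1}/\mi_E^n\\ b \in \mi_F^{n-1}/\mi_F^n}}\pi\bigl(\hat{u}(a, b\e - a\overline{a}/2)\bigr)v .
\]
First I would note that the second sum is literally $v'$: it coincides, term by term, with the definition of $v'$ after the harmless relabelling $(a,b)\mapsto(y,z)$. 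Hence the whole claim reduces to identifying the first sum with $\sum_{y\in\mi_E^{-1}/\ri_E}\pi(\zeta u(y,-y\overline{y}/2))v$.

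To handle the first sum I would use two elementary matrix identities. The first is $\zeta^{-1}t_{n-1} = t_n$, that is $t_{n-1} = \zeta t_n$, which follows at once from the definitions of $\zeta$, $t_{n-1}$ and $t_n$. The second is the conjugation formula
\[
t_n\,\hat{u}(y, -y\overline{y}/2)\,t_n = u\bigl(-\p^{-n}\overline{y},\, -\p^{-2n}y\overline{y}/2\bigr) = u(x, -x\overline{x}/2), \qquad x := -\p^{-n}\overline{y},
\]
obtained by a direct multiplication (recalling $t_n^2 = 1$); one checks in passing that the right-hand side genuinely lies in $U$, because $\p \in F$ and the defining relation $y + \overline{y} + x\overline{x} = 0$ is respected by the substitution $x = -\p^{-n}\overline{y}$. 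Putting these together, $t_{n-1}\hat{u}(y,-y\overline{y}/2) = \zeta\,u(x,-x\overline{x}/2)\,t_n$, and because $t_n \in K_n$ and $v \in V(n)$ we have $\pi(t_n)v = v$, so $\pi\bigl(t_{n-1}\hat{u}(y,-y\overline{y}/2)\bigr)v = \pi\bigl(\zeta u(x,-x\overline{x}/2)\bigr)v$. As $y$ runs over $\mi_E^{n-1}/\mi_E^n$, the element $x = -\p^{-n}\overline{y}$ runs over $\mi_E^{-1}/\ri_E$, so the first sum equals $\sum_{x\in\mi_E^{-1}/\ri_E}\pi(\zeta u(x,-x\overline{x}/2))v$. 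Adding the two pieces yields the formula claimed.

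The argument is a short bookkeeping computation with no real obstacle; the only point that needs care is to check the conjugation formula honestly and to verify that $y \mapsto -\p^{-n}\overline{y}$ gives a bijection from $\mi_E^{n-1}/\mi_E^n$ onto $\mi_E^{-1}/\ri_E$, so that the two index sets match exactly.
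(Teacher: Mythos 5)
Your proof is correct and follows the same route as the paper: expand $\delta v$ via (\ref{eq:delta}) and Lemma~\ref{lem:level_coset}, identify the second sum with $v'$, then rewrite the first sum using $t_{n-1} = \zeta t_n$, the invariance $\pi(t_n)v = v$, and the conjugation $t_n \hat{u}(y, -y\overline{y}/2)t_n = u(-\p^{-n}\overline{y}, -\p^{-2n}y\overline{y}/2)$ followed by the reindexing $y \mapsto -\p^{-n}\overline{y}$. The only difference is that you spell out the matrix computation and the reindexing bijection that the paper leaves implicit.
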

\begin{proof}
By (\ref{eq:delta}) and Lemma~\ref{lem:level_coset},
we get
\begin{eqnarray*}
\delta v & =& \sum_{y \in \mi_E^{n-1}/\mi_E^n}
\pi( t_{n-1} \hat{u}(y, -y\overline{y}/2)) v
+
\sum_{\substack{a \in \mi_E^{n-1}/\mi_E^n\\b \in \mi_F^{n-1}/\mi_F^n}}
\pi(\hat{u}(a, b\e-a\overline{a}/2) ) v\\
& =& \sum_{y \in \mi_E^{n-1}/\mi_E^n}
\pi( \zeta t_{n} \hat{u}(y, -y\overline{y}/2)t_n) v
+
v'\\
& =& \sum_{y \in \mi_E^{-1}/\ri_E}
\pi( \zeta{u}(y, -y\overline{y}/2)) v
+
v',
\end{eqnarray*}
as required.
\end{proof}

We consider the level lowering operator
for $n = N_\pi$.
Since  $V(N_\pi-1) = \{0\}$,
we have
$\delta v = 0$ for all $v \in V(N_\pi)$.
For a newform $v$ in $V(N_\pi)$,
we set $c_i$ and $c'_i$ as in (\ref{eq:c}).
Then we get another relation between $c_i$ and $c'_i$.
\begin{lem}\label{lem:level_rec}
Suppose that $N_\pi \geq 2$ and $N_\pi > n_\pi$.
Then we have
\begin{eqnarray*}
& c'_i + q^2 c_{i+1} = 0, \ i \geq 0,\\
& c'_{-1} = 0.
\end{eqnarray*}
\end{lem}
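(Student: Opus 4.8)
The plan is to feed the vanishing of the level lowering operator on newforms into the explicit formula of Lemma~\ref{lem:level1}. Since $N_\pi \geq 2$ and $N_\pi > n_\pi$, that lemma applies with $n = N_\pi$, and since $V(N_\pi - 1) = \{0\}$ by minimality of $N_\pi$ we have $\delta v = 0$ for every newform $v \in V(N_\pi)$. Applying the Whittaker functional to this identity, i.e.\ evaluating $W_{\delta v}$ at $\zeta^i$ and using $W_{\pi(g)v}(x) = W_v(xg)$, gives
\[
0 \;=\; W_{v'}(\zeta^i) \;+\; \sum_{y \in \mi_E^{-1}/\ri_E} W_v\bigl(\zeta^{i+1} u(y, -y\overline{y}/2)\bigr),
\]
so the whole matter is reduced to evaluating the Whittaker values $W_v(\zeta^{i+1} u(y, -y\overline{y}/2))$ and summing them.

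For that I would first record the conjugation identity $\zeta^{j} u(x, z) \zeta^{-j} = u(\p^{j} x, \p^{2j} z)$, an immediate matrix computation from $\zeta = \mathrm{diag}(\p, 1, \p^{-1})$. Applying it with $j = i + 1$ and using the left equivariance $W_v(u(x, z)g) = \psi_E(x) W_v(g)$ of Whittaker functions, each summand becomes $\psi_E(\p^{i+1} y)\, W_v(\zeta^{i+1}) = \psi_E(\p^{i+1} y)\, c_{i+1}$, with $c_j = W_v(\zeta^j)$ and $c'_j = W_{v'}(\zeta^j)$ as in (\ref{eq:c}). Hence the displayed identity collapses to $0 = c'_i + \bigl(\sum_{y \in \mi_E^{-1}/\ri_E} \psi_E(\p^{i+1} y)\bigr)\, c_{i+1}$.

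It remains to compute the character sum, which splits into two cases governed by the conductor of $\psi_E$. For $i \geq 0$, as $y$ ranges over $\mi_E^{-1}/\ri_E$ the argument $\p^{i+1} y$ ranges over $\mi_E^{i}/\mi_E^{i+1} \subseteq \ri_E$, on which $\psi_E$ is trivial, so the sum is just the number of cosets $[\mi_E^{-1} : \ri_E] = q_E = q^2$, giving $c'_i + q^2 c_{i+1} = 0$. For $i = -1$ the argument ranges over $\mi_E^{-1}/\ri_E$ itself, and $\psi_E$ is trivial on $\ri_E$ but nontrivial on $\mi_E^{-1}$, i.e.\ it induces a nontrivial character of the finite group $\mi_E^{-1}/\ri_E$, so the sum vanishes and $c'_{-1} = 0$. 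The argument is routine given Lemma~\ref{lem:level1}; the only point that demands care is the bookkeeping of the $\p$-powers of lattices when moving $u(y, -y\overline{y}/2)$ across $\zeta^{i+1}$, which is precisely what makes the conductor hypothesis on $\psi_E$ separate the two cases.
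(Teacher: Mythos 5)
Your proof is correct and follows the same route as the paper: feed $\delta v = 0$ into Lemma~\ref{lem:level1}, conjugate $u(y,-y\overline{y}/2)$ past $\zeta^{i+1}$, invoke the left $\psi_E$-equivariance of Whittaker functions, and evaluate the resulting character sum over $\mi_E^{-1}/\ri_E$, which is $q^2$ for $i\geq 0$ and $0$ for $i=-1$ because $\psi_E$ has conductor $\ri_E$. The paper's writeup merely performs the cosmetic change of variables $y\mapsto \p^{i+1}y$ before summing; otherwise the two arguments are identical.
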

\begin{proof}
By Lemma~\ref{lem:level1},
we have
\begin{eqnarray*}
0 & = & 
W_{\delta v}(\zeta^i)\\
& = & W_{v'}(\zeta^i) + 
 \sum_{y \in \mi_E^{-1}/\ri_E}
W_v ( \zeta^{i+1} {u}(y, -y\overline{y}/2)) \\
& = & W_{v'}(\zeta^i) + 
 \sum_{y \in \mi_E^{i}/\mi_E^{i+1}}
W_v({u}(y, -y\overline{y}/2)\zeta^{i+1}) \\
& = & W_{v'}(\zeta^i) + 
 \sum_{y \in \mi_E^{i}/\mi_E^{i+1}}
 \psi_E(y)
W_v(\zeta^{i+1}).
\end{eqnarray*}
Since $\psi_E$ has conductor $\ri_E$,
we obtain
\begin{eqnarray*}
0 & = & 
W_{v'}(\zeta^{i}) +
q^2
W_v (\zeta^{i+1}),\ \mathrm{for}\ i \geq 0 
\end{eqnarray*}
and
\[
0  =  W_{v'} (\zeta^{-1}).
\]
This proves the lemma.
\end{proof}
\subsection{Zeta integrals of newforms}\label{subsec:hecke3}
Let $(\pi, V)$ be an irreducible generic representation
of $G$ such that $N_\pi \geq 2$ and $N_\pi > n_\pi$.
For $v \in V(N_\pi)$,
we set 
\[
c_i = W_v(\zeta^i),\ i \in \Z.
\]
Then we get a recursion formula for $\{c_i\}_{i \in \Z}$.
\begin{lem}\label{lem:rec}
Suppose that $N_\pi \geq 2$ and $N_\pi > n_\pi$.
Then we have
\begin{eqnarray*}
& (\lambda+q^2)c_i = q^4 c_{i+1}, \ i \geq 1,\\
& \lambda c_{0} = q^4 c_{1}.
\end{eqnarray*}
\end{lem}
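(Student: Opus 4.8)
The plan is to obtain both recursions as a purely formal consequence of Lemmas~\ref{lem:hecke_rec} and~\ref{lem:level_rec}, both of which are available under the standing hypotheses $N_\pi \geq 2$ and $N_\pi > n_\pi$. Fix a newform $v \in V(N_\pi)$; since this space is one-dimensional by Theorem~\ref{thm:new}(ii), an arbitrary element of $V(N_\pi)$ is a scalar multiple of $v$, and all the relations below are homogeneous linear in $v$, so it suffices to treat a newform. Write $c_i = W_v(\zeta^i)$ and $c'_i = W_{v'}(\zeta^i)$ as in (\ref{eq:c}). Lemma~\ref{lem:hecke_rec} supplies $\lambda c_i = c'_{i-1} + q^4 c_{i+1}$ for $i \geq 0$, and Lemma~\ref{lem:level_rec} supplies $c'_i = -q^2 c_{i+1}$ for $i \geq 0$ together with the boundary identity $c'_{-1} = 0$.

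For $i \geq 1$ I would substitute the level-lowering relation evaluated at index $i-1$ (which is permissible since $i-1 \geq 0$), namely $c'_{i-1} = -q^2 c_i$, into the Hecke relation at index $i$. This gives $\lambda c_i = -q^2 c_i + q^4 c_{i+1}$, which is exactly $(\lambda + q^2) c_i = q^4 c_{i+1}$. For $i = 0$ the Hecke relation reads $\lambda c_0 = c'_{-1} + q^4 c_1$, and the boundary identity $c'_{-1} = 0$ collapses this to $\lambda c_0 = q^4 c_1$. Assembling the two cases yields the statement.

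I do not expect any real obstacle: the lemma is a two-line deduction once Lemmas~\ref{lem:hecke_rec} and~\ref{lem:level_rec} are in hand, all of the genuine work (the coset decompositions of Lemmas~\ref{lem:hecke_coset} and~\ref{lem:level_coset}, the explicit descriptions of $T$ and $\delta$, and the input that $\psi_E$ has conductor $\ri_E$) having been carried out already. The one point worth attention is the bookkeeping of index ranges: the special behaviour at $i = 0$ in the statement is precisely a reflection of the extra identity $c'_{-1} = 0$, which is not subsumed by the generic relation $c'_i = -q^2 c_{i+1}$ valid for $i \geq 0$.
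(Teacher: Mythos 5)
Your proof is correct and coincides with the paper's: the paper likewise derives the lemma by combining Lemmas~\ref{lem:hecke_rec} and~\ref{lem:level_rec}, only without spelling out the substitution. You have simply made explicit the index shift and the role of the boundary identity $c'_{-1}=0$, which the paper leaves to the reader.
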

\begin{proof}
The assertion follows from Lemmas~\ref{lem:hecke_rec}
and \ref{lem:level_rec}.
\end{proof}

Now we get an explicit  formula of zeta integrals 
of newforms
in terms of Hecke eigenvalues.
\begin{prop}\label{prop:zeta}
Suppose that an irreducible generic representation 
$(\pi, V)$of $G$
satisfies $N_\pi \geq 2$ and $N_\pi > n_\pi$.
Then
we have
\[
Z(s, W_v)
=
\frac{1-q^{-2s}}{\displaystyle 1-\frac{\lambda+q^2}{q^2}q^{-2s}}W_v(1),
\]
for $v \in V(N_\pi)$,
where $\lambda$ is the eigenvalue of the Hecke operator 
$T$ on $V(N_\pi)$.
\end{prop}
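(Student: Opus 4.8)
The plan is to unfold $Z(s,W_v)$ into a power series in $q^{-2s}$ whose coefficients are the numbers $c_i = W_v(\zeta^i)$, and then to sum that series by means of the recursion of Lemma~\ref{lem:rec}.

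First I would use the isomorphism $E^\times \simeq T_H$, $a \mapsto t(a)$, together with the partition $E^\times = \bigsqcup_{i \in \Z}\p^i\ri_E^\times$. Since $\zeta = t(\p)$, we have $\zeta^i = t(\p^i)$; and by Proposition~\ref{prop:new}~(i) the function $a \mapsto W_v(t(a))$ is $\ri_E^\times$-invariant with support contained in $\ri_E$, so it equals $c_i$ on $\p^i\ri_E^\times$ and vanishes for $i<0$. As $|a|_E = q^{-2i}$ on $\p^i\ri_E^\times$ and each of these cosets has $d^\times a$-volume $1$ with our normalization, this gives
\[
Z(s, W_v) = \sum_{i \geq 0} c_i\, q^{2i}\,(q^{-2s})^i,
\]
an identity valid wherever the left-hand side converges, in particular for $\mathrm{Re}(s)$ large by the proof of \cite{Baruch} Proposition 3.4.

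Next I would solve the recursion. Lemma~\ref{lem:rec} gives $c_1 = \lambda q^{-4}c_0$ and $c_{i+1} = (\lambda+q^2)q^{-4}c_i$ for $i \geq 1$, hence $c_i = \lambda q^{-4}\bigl((\lambda+q^2)q^{-4}\bigr)^{i-1} c_0$ for $i \geq 1$. Substituting this into the series and writing $X = q^{-2s}$, the tail $\sum_{i\geq 1}c_i q^{2i}X^i$ is geometric with ratio $\tfrac{\lambda+q^2}{q^2}X$; summing it, adding the term $c_0$ for $i=0$, and combining over a common denominator yields
\[
Z(s, W_v) = c_0\cdot\frac{1-X}{1-\frac{\lambda+q^2}{q^2}X} = \frac{1-q^{-2s}}{1-\frac{\lambda+q^2}{q^2}q^{-2s}}\,W_v(1),
\]
since $c_0 = W_v(1)$. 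This rational function then represents $Z(s,W_v)$ in $\C(q^{-2s})$.

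The proposition presents no genuine obstacle of its own: all of the difficulty is already absorbed into the Hecke-operator and level-lowering computations of the previous two subsections, and what remains is bookkeeping — identifying $\zeta^i$ with $t(\p^i)$, keeping track of the measure normalizations on $E^\times$ and $K_{n,H}$, and noting that the geometric-series identity, first obtained in the half-plane of absolute convergence, propagates to all of $\C(q^{-2s})$. I would also point out that the hypotheses $N_\pi \geq 2$ and $N_\pi > n_\pi$ enter only through Lemma~\ref{lem:rec} and Proposition~\ref{prop:new}, so the formula is uniform for $\pi$ in that range; in particular, if $W_v(1)=0$ the recursion forces every $c_i=0$ and both sides vanish.
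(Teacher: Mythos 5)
Your proof is correct and follows essentially the same route as the paper's: decompose $E^\times$ into $\ri_E^\times$-cosets, invoke Proposition~\ref{prop:new}~(i) to reduce $Z(s,W_v)$ to the power series $\sum_{i\geq 0} c_i q^{2i}q^{-2is}$, then sum using the two-part recursion from Lemma~\ref{lem:rec}. The bookkeeping (the factor $q^{2i}$ from $|\p^i|_E^{s-1}$, the split between the $i=0$ term and the geometric tail) matches the paper's computation exactly.
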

\begin{proof}
Since $v$ is fixed by $t(b)$, $b \in \ri_E^\times$,
we get 
$W_v(t(a)) = W_v(\zeta^{\nu_E(a)})$, for $a \in E^\times$,
where $\nu_E$ is the valuation on $E$
normalized so that $\nu_E(\p) = 1$.
Proposition~\ref{prop:new} (i)
says that 
$W_v(\zeta^i) = 0$ for all $i < 0$.
Then, by Lemma~\ref{lem:rec},
we obtain
\begin{eqnarray*}
Z(s, W_v) & = & 
\int_{E^\times}W_v(t(a))|a|_E^{s-1}d^\times a
 = 
\sum_{i = 0}^\infty W_v(\zeta^i) |\p^i|_E^{s-1}\\
&  = &
\sum_{i = 0}^\infty c_i q^{2i(1-s)}\\
& = & 
c_0 + c_0\frac{\lambda}{q^4}\sum_{i = 0}^\infty \left(\frac{\lambda+q^2}{q^4}\right)^i q^{2(i+1)(1-s)}\\
& = &
c_0+c_0\frac{\lambda}{q^2}q^{-2s}\sum_{i = 0}^\infty
\left(\frac{\lambda+q^2}{q^2}\right)^i q^{-2is}\\
& = &
c_0+c_0\frac{\lambda}{q^2}q^{-2s}
\frac{1}{1-\displaystyle \frac{\lambda+q^2}{q^2}q^{-2s}}\\
& = & 
\frac{c_0(1-q^{-2s})}{1-\displaystyle \frac{\lambda+q^2}{q^2}q^{-2s}}.
\end{eqnarray*}
This shows the proposition.
\end{proof}

\Section{Proof of Theorem~\ref{thm:main}}\label{pf}
Now we shall prove Theorem~\ref{thm:main}.
Let $(\pi, V)$ be an irreducible generic supercuspidal representation of $G$.
By Theorem~\ref{thm:new} (iii),
we can apply Proposition~\ref{prop:zeta}
to newforms for $\pi$.
Due to Proposition~\ref{prop:new} (ii),
we can take $v \in V(N_\pi)$ such that $W_v(1) = 1$.
For such $v$, 
we have
\[
Z(s, W_v)
=
\frac{1-q^{-2s}}{\displaystyle 1-\frac{\lambda+q^2}{q^2}q^{-2s}}. 
\]
It follows from the proof of Proposition~\ref{prop:L_sc}
that
$Z(s, W_v)$ is a polynomial in $q^{-2s}$ and $q^{2s}$.
Therefore we must have $Z(s, W_v) = 1-q^{-2s}$ or $1$,
and hence
$Z(s, W_v, \Phi_{N_\pi}) = Z(s, W_v)L_E(s, 1) =  1$ or $L_E(s, 1)$ 
by Proposition~\ref{prop:zeta1}.

Suppose that $Z(s, W_v, \Phi_{N_\pi}) \neq L(s, \pi)$.
Then by Proposition~\ref{prop:L_sc},
we must have
$Z(s, W_v, \Phi_{N_\pi}) = 1$ and $L(s, \pi) = L_E(s, 1)$.
Thus, the functional equation
\[
\frac{Z(1-s, W_v, \hat{\Phi}_{N_\pi})}{L(1-s, \pi)}
= \varepsilon(s, \pi, \psi_F, \psi_E)
\frac{Z(s, W_v, {\Phi}_{N_\pi})}{L(s, \pi)}
\]
implies
\[
q^{-2N_{\pi}(s-1/2)}\frac{1}{L_E(1-s, 1)}
= \varepsilon(s, \pi, \psi_F, \psi_E)
\frac{1}{L_E(s, 1)}
\]
because of Proposition~\ref{prop:zeta2}.
This contradicts Proposition~\ref{prop:mono},
which states that $\varepsilon(s, \pi, \psi_F, \psi_E)$ is monomial.
We therefore conclude that $Z(s, W_v, \Phi_{N_\pi}) = L(s, \pi)$.
This completes the proof
of Theorem~\ref{thm:main}.


\begin{thebibliography}{10}

\bibitem{Baruch}
E.~M. Baruch.
\newblock On the gamma factors attached to representations of {${\rm U}(2,1)$}
  over a {$p$}-adic field.
\newblock {\em Israel J. Math.}, 102:317--345, 1997.

\bibitem{Casselman}
W.~Casselman.
\newblock On some results of {A}tkin and {L}ehner.
\newblock {\em Math. Ann.}, 201:301--314, 1973.

\bibitem{Deligne}
P.~Deligne.
\newblock Formes modulaires et repr\'esentations de {${\rm GL}(2)$}.
\newblock In {\em Modular functions of one variable, {II} ({P}roc. {I}nternat.
  {S}ummer {S}chool, {U}niv. {A}ntwerp, {A}ntwerp, 1972)}, pages 55--105.
  Lecture Notes in Math., Vol. 349. Springer, Berlin, 1973.

\bibitem{GPS}
S.~Gelbart and I.~Piatetski-Shapiro.
\newblock Automorphic forms and {$L$}-functions for the unitary group.
\newblock In {\em Lie group representations, {II} ({C}ollege {P}ark, {M}d.,
  1982/1983)}, volume 1041 of {\em Lecture Notes in Math.}, pages 141--184.
  Springer, Berlin, 1984.

\bibitem{Ishikawa}
Y.~Ishikawa.
\newblock On standard {$L$}-function for generic cusp forms on {${\rm
  U}(2,1)$}.
\newblock {\em S\=urikaisekikenky\=usho K\=oky\=uroku}, (1468):46--54, 2006.

\bibitem{JPSS}
H.~Jacquet, I.~Piatetski-Shapiro, and J.~Shalika.
\newblock Conducteur des repr\'esentations du groupe lin\'eaire.
\newblock {\em Math. Ann.}, 256(2):199--214, 1981.

\bibitem{OdaKoseki}
H.~Koseki and T.~Oda.
\newblock Whittaker functions for the large discrete series representations of
  {${\rm SU}(2,1)$} and related zeta integrals.
\newblock {\em Publ. Res. Inst. Math. Sci.}, 31(6):959--999, 1995.

\bibitem{M}
M.~Miyauchi.
\newblock On local newforms for unramified {$\mathrm{U}(2,1)$}, preprint,
  ar{X}iv:1105.6004, 2011.

\bibitem{MVW}
C.~M{\oe}glin, M.-F. Vign{\'e}ras, and J.~Waldspurger.
\newblock {\em Correspondances de {H}owe sur un corps {$p$}-adique}, volume
  1291 of {\em Lecture Notes in Mathematics}.
\newblock Springer-Verlag, Berlin, 1987.

\bibitem{RS}
B.~Roberts and R.~Schmidt.
\newblock {\em Local newforms for {GS}p(4)}, volume 1918 of {\em Lecture Notes
  in Mathematics}.
\newblock Springer, Berlin, 2007.

\end{thebibliography}
\end{document}